\numberwithin{equation}{section}
\newcommand{\rr}{\mathbb{R}}
\newcommand{\lan}{\langle}
\newcommand{\ran}{\rangle}
\newcommand{\be}{\begin{eqnarray*}}
\newcommand{\bel}{\begin{eqnarray}}
\newcommand{\ee}{\end{eqnarray*}}
\newcommand{\eel}{\end{eqnarray}}
\newcommand{\ba}{\begin{aligned}}
\newcommand{\ea}{\end{aligned}}
\newcommand{\de}{\Delta}
\newcommand{\al}{\alpha}
\newcommand{\na}{\nabla}
\newcommand{\pa}{\partial}
\newcommand{\wh}{\widehat}
\newcommand{\wt}{\widetilde}
\newcommand{\xp}[1]{^{{#1}}}
\newcommand{\myb}[1]{}
\newtheorem{theorem}{Theorem}
\newtheorem{lem}{Lemma}
\newtheorem{pro}{Proposition}
\newtheorem{rmk}{Remark}
\numberwithin{cor}{section}
\numberwithin{pro}{section}
\numberwithin{rmk}{section}
\numberwithin{remark}{section}
\numberwithin{lem}{section}
\newcommand{\norm}[1]{\left\lVert#1\right\rVert}
\newcommand{\grad}{\nabla}
\newcommand\Torus{{\mathbb T}}
\newcommand{\dss}{\displaystyle}
\title[Enhanced dissipation, hypoellipticity for passive scalar equations]{Enhanced dissipation, hypoellipticity for passive scalar equations  with fractional dissipation}
\dedicatory{Dedicated to Yijun He}
\date{\today}
\author{Siming He} \thanks{simhe@math.duke.edu, Department of Mathematics, Duke University}
\thanks{\textbf{Acknowledgment.} This work was supported in part by NSF grants DMS 2006660, DMS grant 2006372. The author would like to thank Alexander Kiselev for many discussions. { The author would also like to thank the referees for pointing out important generalizations of the main result.}}
\begin{document}
\begin{abstract}
We consider the passive scalar equations subject to shear flow advection and  fractional dissipation. The enhanced dissipation estimates are derived. For the classical passive scalar equation ($\gamma=1$), our result agrees with the sharp one obtained in \cite{Wei18}.
\end{abstract}
\maketitle
\section{Introduction}
We consider the passive scalar equations subject to shear advection and fractional dissipation:
\begin{align}\label{EQ:Fractional_PS}
\pa_t n+u(y)\pa_x n=&-\nu(-\de_x)^\gamma n-\nu (-\de_y)^\gamma n,\\
\quad n(t=0,x,y)=&n_0(x,y),\quad  (x ,y)\in\Torus^2=[-\pi,\pi]^2.
\end{align} 
Their hypoelliptic counterparts  read as follows%
\begin{align}
\pa_t \eta +u(y)\pa_x \eta=&-\nu (-\de_y)^\gamma \eta,\label{EQ:Fractional_Hypoelliptic}\\
\quad \eta(t=0,x,y)=&\eta_0(x,y),\quad  (x ,y)\in\Torus^2.
\end{align} 
Here $n,\,\eta$ denote the densities transported by the flow. The fractional dissipation order $\gamma$ takes value in $(0,2]$. The viscosity $\nu$ is   small, i.e., $\nu\in(0,1)$. {Since the dynamics \eqref{EQ:Fractional_PS}, \eqref{EQ:Fractional_Hypoelliptic} preserve the  average of solutions, one can subtract the average and assume without loss of generality that (\cite{ElgindiCotiZelatiDelgadino18})} 
\begin{align}\label{zero_average}
\int_{\Torus}n_0(x,y)dx=0,\quad \int_{\Torus}\eta_0(x,y)dx=0,\quad \forall y\in\Torus.
\end{align} 
Assume the shear flow profiles $u(y)$ have finitely many critical points $\{y_i^\star\}_{i=1}^N$.  The \emph{vanishing order} $j_i$ associated with each critical point $y_i^\star$ is defined as the smallest integer such that
\begin{align}\label{vanishing_order}
u^{(\ell)}&(y_i^\star)=0,\quad u^{(j_i+1)}(y_i^\star)\neq 0,\quad \forall 1\leq \ell \leq j_i,\,  \ell\in \mathbb{N}.
\end{align} 
The \emph{maximal vanishing order}  $j_m $ of the shear flow profile $u(y)$ is  $j_{m}:=\max_{i=1}^N \{j_i\}$.  Since any smooth shear flow profiles on the torus $\Torus$ have at least one critical point, the maximal vanishing orders $j_m $ are greater than $1$. If the maximal vanishing order is $1$, the shear flow is  \emph{nondegenerate}.

The enhanced dissipation effect of the classical passive scalar equations ($\gamma=1$) subject to shear flow has attracted much attentions from the mathematical fluid mechanics community in the recent years. In the paper \cite{BCZ15}, J. Bedrossian and M. Coti-Zelati applied hypocoercivity functional (\cite{villani2009, BeckWayne11}) to show that if $\nu$ is smaller than a universal threshold $\nu_0$,   the following enhanced dissipation estimate holds for some universal constants $C> 1 ,\, \delta_{ED}\in(0,1) $,
\begin{align}
\|\eta(t)\|_{L^2}\leq C  \|\eta_0\|_{L^2} e^{-\delta_{ED} d(\nu)|\log\nu|^{-2} t},\quad d(\nu)=\frac{j_m +1}{j_m +3},\,\forall t\in[0,\infty).\label{PS_ED_classical}
\end{align}   

Their result was later improved by D. Wei \cite{Wei18}. Combining the resolvent estimates and a Gearhart-Pr\"uss type theorem, D. Wei removed the logarithmic correction in the dissipation rate. Later, M. Coti Zelati and T. Drivas  showed that the enhanced dissipation rate $d(\nu)$ appeared in  \eqref{PS_ED_classical} is sharp (\cite{CotiZelatiDrivas19}).  
The underlying mechanism of the enhanced dissipation effect is that the shear flow advection triggers the phase mixing phenomenon (\cite{MouhotVillani11, Zillinger2014, BM13}), which amplifies the damping effect of the dissipation operators (see, e.g., \cite{BMV14, ElgindiCotiZelatiDelgadino18}). Similar phase mixing phenomena play a fundamental role in  Landau damping, see, e.g., \cite{MouhotVillani11,  BMM13, Bedrossian21}. 
Enhanced dissipation effect of the rough shear flows, and its relation to mixing are explored in \cite{Wei18, ColomboCotiZelatiWidmayer20}.

The shear flows' enhanced dissipation effect has found applications in various problems in fluid mechanics, plasma physics, and biology. First of all, the shear flows' enhanced dissipation is one of the stabilizing mechanisms in hydrodynamic stability. We refer the interested readers to the study of stability of the Couette flows (\cite{Romanov73, BMV14, BGM15I, BGM15II, BGM15III, ChenLiWeiZhang18, BedrossianHe19}), the {P}oiseuille flows (\cite{CotiZelatiElgindiWidmayer20, DingLin20}), and the Kolmogorov flows (\cite{WeiZhangZhao15, IbrahimMaekawaMasmoudi19, LiWeiZhang20}). In plasma physics, the enhanced collision effect, equivalent to the enhanced dissipation effect, stabilizes the plasma and prevents the echo-chain instability (\cite{Bedrossian17}). In biology, the enhanced dissipation effect of the ambient shear flows suppresses the chemotactic blow-ups (\cite{BedrossianHe16, He}).

The enhanced dissipation effect of the shear flow is \emph{heterogeneous}. If the initial data $n_0$ of the passive scalar equation depends only on $y$-variables, $n(t,y)$ solves the heat equation, and no enhanced dissipation is {possible}.  Hence the zero-average constraint \eqref{zero_average} is enforced. However, there exist fluid flows inducing the enhanced dissipation effect in all directions. These are the relaxation-enhancing flows. The concept is first introduced by P. Constantin et al.,  \cite{ConstantinEtAl08}. In the papers   \cite{ElgindiCotiZelatiDelgadino18, FengIyer19}, the authors prove that flows with mixing properties are relaxation enhancing.  Explicit constructions of mixing flows have attracted much attention in the dynamical system and fluid mechanics community, see, e.g., \cite{vonNeumann32, Kolmogorov53, Seis13, IyerKiselevXu14, AlbertiCrippaMazzucato14, AlbertiCrippaMazzucato19, YaoZlatos17,  ElgindiZlatos18}, and the references therein. The relaxation enhancing flows find applications in various problems, see, e.g., \cite{KiselevXu15, HopfRodrigo18, IyerXuZlatos, FengFengIyerThiffeault20}.

Much less is known for the systems \eqref{EQ:Fractional_PS} and  \eqref{EQ:Fractional_Hypoelliptic}. The enhanced dissipation result for the passive scalar equation \eqref{EQ:Fractional_PS} is obtained in \cite{ElgindiCotiZelatiDelgadino18}. However, the enhanced dissipation rate obtained is not sharp in general. Recently, an enhanced dissipation estimate for the $\gamma=2$ case is derived in \cite{CotiZelatiDolceFengMazzucato}.

By taking the Fourier transform in the $x$-variable, one obtains the equations for each Fourier mode:
\begin{align}
\pa_t \wh n_k+iu(y) k\wh n_k=&-\nu|k|^{2\gamma}\wh n_k-\nu (-\de_y)^\gamma \wh n_k,\quad \wh n_{k}(t=0,y)=\wh n_{0;k}(y);\label{EQ:Fractional_k_by_k}\\
\pa_t \wh \eta_k +iu(y) k\wh \eta_k=&-\nu (-\de_y)^\gamma \wh \eta_k,\quad \wh \eta_{k}(t=0,y)=\wh \eta _{0;k}(y).\label{EQ:Fractional_Hypoelliptic_k_by_k}
\end{align} 

The first main theorem of the paper is the following
\begin{theorem}\label{thm:semigroup_est_PS}
Consider the equation \eqref{EQ:Fractional_Hypoelliptic} subject to initial condition $\eta_{0 }\in L^2(\Torus^2)$. Assume that the shear flow profile $u(\cdot)\in C^\infty(\Torus)$ has finitely many critical points $\{y_i^\star\}_{i=1}^N$ and the  maximal vanishing order $j_m\geq1$ is finite. Further assume that there exist $R_i\in(0,\frac{\pi}{10}),\, i\in\{1,2,...N\}$ such that in the neighborhood $B(y_i^\star;R_i)\subset \Torus$, the following estimate holds for some universal constant $C_1(u)>1$, 
\begin{align}\label{relation:u'_and_y-y_i_star_j}
\frac{1}{C_1(u)}|y-y_i^\star|^{j_i}\leq |u'(y)|\leq C_1(u) |y-y_i^\star|^{j_i},\quad \forall y \in B(y_i^\star;R_i). 
\end{align} 
Then there exists a viscosity threshold $\nu_0=\nu_0(u)$ such that if $\nu\leq \nu_0$, the following enhanced dissipation estimate holds,  
\begin{align}\label{ED_full}
||\eta (t)\|_2\leq C  \|\eta_{0 }\|_2e^{-\delta_{ED} d (\nu)|\log\nu|^{-\beta(\gamma)}  t},\quad \forall t\geq 0,\quad d(\nu)=\frac{j_m +1}{j_m +1+2\gamma},\quad \gamma\in (1/2,2].
\end{align}
Here the constants $C> 1 ,\,\delta_{ED}>0$ depend only on the shear profile $u$. The parameter $\beta=\beta(\gamma)$ depends on the fractional dissipation order $\gamma$ and vanishes for ${\gamma\in[1,2]}$. The explicit form of $\beta$ is $\beta(\gamma)=8\gamma(1-\gamma)\mathbf{1}_{\gamma\in(1/2,1)}$.  

For the $k$-by-$k$ system \eqref{EQ:Fractional_Hypoelliptic_k_by_k}, the following enhanced dissipation estimate holds
\begin{align}\label{ED_k_by_k}
\|\wh\eta_k(t)\|_2\leq C \|\wh\eta_{0;k}\|_2 e\xp{-\delta_{ED}\nu\xp{\frac{j_m +1}{j_m +1+2\gamma}}|k|\xp{\frac{2\gamma}{j_m +1+2\gamma}}|\log(\nu|k|^{-1})|^{-\beta(\gamma)} t},\,\forall t\geq  0,\, k\neq0, \, \gamma\in(1/2,2].
\end{align}
\end{theorem}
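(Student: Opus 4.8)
The plan is to reduce the full estimate \eqref{ED_full} to the $k$-by-$k$ estimate \eqref{ED_k_by_k} by Plancherel's theorem in $x$: since $\|\eta(t)\|_{L^2(\Torus^2)}^2 = \sum_{k\neq 0}\|\wh\eta_k(t)\|_{L^2(\Torus)}^2$, it suffices to bound each mode, and because $k\mapsto \nu^{\frac{j_m+1}{j_m+1+2\gamma}}|k|^{\frac{2\gamma}{j_m+1+2\gamma}}|\log(\nu|k|^{-1})|^{-\beta(\gamma)}$ is increasing in $|k|\geq 1$ (once $\nu\leq\nu_0$ is small), the worst mode is $|k|=1$, which after the substitution $\nu|k|^{-1}\mapsto\nu$ yields \eqref{ED_full}. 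So the heart of the matter is \eqref{EQ:Fractional_Hypoelliptic_k_by_k}. Rescaling time by $|k|$ and setting $\lambda = \nu|k|^{-1}$, the semigroup generator becomes $H_\lambda = iu(y) + \lambda(-\partial_y^2)^\gamma$ (an accretive operator with $\mathrm{Re}\,H_\lambda\geq 0$), and the target is a decay rate $\sim \lambda^{d(\nu)}|\log\lambda|^{-\beta(\gamma)}$ for $e^{-tH_\lambda}$. The natural tool is a Gearhart–Pr\"uss / resolvent approach in the spirit of \cite{Wei18}: one proves a uniform lower bound
\[
\inf_{\xi\in\R}\,\bigl\|(H_\lambda - i\xi)f\bigr\|_{L^2} \;\geq\; c\,\lambda^{d(\nu)}|\log\lambda|^{-\beta(\gamma)}\,\|f\|_{L^2}
\]
for all $f$ in the domain, and then invokes the quantitative Gearhart–Pr\"uss theorem to convert this into exponential decay of the semigroup at essentially that rate (up to the universal constants $C,\delta_{ED}$).

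The core estimate to establish is therefore the resolvent lower bound. I would proceed by the standard localization/duality argument: pair $(H_\lambda - i\xi)f$ with $f$ and with $u f$ (or a suitable test function) to extract, respectively, the dissipative coercivity $\lambda\|(-\partial_y^2)^{\gamma/2}f\|_{L^2}^2 \lesssim \|(H_\lambda-i\xi)f\|_{L^2}\|f\|_{L^2}$ and control of $\|(u(y)-\xi)^{1/2}f\|_{L^2}^2$. The key geometric input is the hypothesis \eqref{relation:u'_and_y-y_i_star_j}: near each critical point $y_i^\star$, $|u'(y)|\sim|y-y_i^\star|^{j_i}$, so the sublevel set $\{|u(y)-\xi|<\delta\}$ has measure $\lesssim \delta^{1/(j_m+1)}$ (summing over critical points gives the maximal vanishing order $j_m$). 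On the complement of a $\delta$-neighborhood of this sublevel set one has $|u(y)-\xi|\gtrsim\delta$, giving good control of $f$ there; inside, one must pay with the fractional dissipation. Balancing $\lambda$ against $\delta$ — the fractional term at scale $\ell=\delta^{1/(j_m+1)}$ in $y$ costs roughly $\lambda\ell^{-2\gamma}$, to be compared with $\delta$ — leads to the optimal choice $\delta\sim\lambda^{(j_m+1)/(j_m+1+2\gamma)}$, which is exactly $d(\nu)$.

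The main obstacle, and the source of the logarithmic correction $\beta(\gamma)$, is the nonlocality of $(-\partial_y^2)^\gamma$ when $\gamma<1$: unlike the Dirichlet-form bound for $\gamma=1$, the fractional Gagliardo seminorm does not localize cleanly, so cutting off $f$ to the critical-point neighborhoods introduces commutator/tail errors. For $\gamma\in(1/2,1)$ one must control $\|(-\partial_y^2)^{\gamma/2}(\chi f)\|$ in terms of $\|(-\partial_y^2)^{\gamma/2}f\|$ plus lower-order terms, and the sharp constant in such a fractional Leibniz/commutator bound degrades logarithmically in the cutoff scale — this is where the exponent $\beta(\gamma)=8\gamma(1-\gamma)\mathbf{1}_{\gamma\in(1/2,1)}$ enters, and where the restriction $\gamma>1/2$ is needed (below $1/2$ the fractional operator is too weak relative to the transport term for this scheme to close). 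For $\gamma\in[1,2]$ the dissipation is a genuine (higher-order) differential operator, integration by parts localizes without loss, and one recovers the clean rate with $\beta=0$; the case $\gamma=1$ then matches \eqref{PS_ED_classical} and the sharp result of \cite{Wei18}. I would organize the proof as: (i) the Gearhart–Pr\"uss reduction; (ii) the measure estimate on sublevel sets of $u$ from \eqref{relation:u'_and_y-y_i_star_j}; (iii) the fractional localization lemma with its explicit logarithmic loss; (iv) assembling (ii)–(iii) into the resolvent bound with the stated $d(\nu)$ and $\beta(\gamma)$; (v) Plancherel to pass from \eqref{ED_k_by_k} to \eqref{ED_full}.
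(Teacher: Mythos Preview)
Your proposal is correct and follows essentially the same route as the paper: Plancherel reduction to the $k$-by-$k$ equation, rescaling to $k=\pm 1$ with $\tilde\nu=\nu|k|^{-1}$, Wei's quantitative Gearhart--Pr\"uss theorem applied to the $m$-accretive operator $H=\nu(-\partial_y^2)^\gamma+iu(y)$, and a resolvent lower bound obtained by localizing near critical points and balancing the fractional dissipation against the sublevel sets of $u-\lambda$, yielding exactly the scale $\delta\sim\nu^{(j_m+1)/(j_m+1+2\gamma)}$.

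One small correction on the source of the logarithm: for non-integer $\gamma\in(1,2)$ the operator $(-\partial_y^2)^\gamma$ is still nonlocal, so the absence of the log loss there is \emph{not} because ``integration by parts localizes without loss.'' In the paper the product rule $\||\partial_y|^\gamma(\chi w)\|_2\lesssim \|\chi\|_{L^\infty}\|w\|_{H^\gamma}+\|w\|_{L^\infty}\|\chi\|_{H^\gamma}$ has a universal constant; what matters is $\|\chi\|_{\dot H^\gamma}$ for the small-scale cutoffs. For $\gamma\in[1,2]$ this is estimated by interpolating between $\dot H^1$ and $\dot H^2$, both of which scale cleanly in the cutoff width. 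For $\gamma\in(1/2,1)$ one must interpolate through $\dot H^{1/2}$, and the $\dot H^{1/2}$ seminorm of a near-step function on an interval of length $\Lambda$ is $\sim|\log\Lambda|^{1/2}$---this is precisely where the factor $|\log\nu|^{2(1-\gamma)}$ enters, propagating to $\beta(\gamma)=8\gamma(1-\gamma)$ in the final rate. The paper also makes your balancing heuristic precise via a spectral-gap inequality $\nu^{(j+1)/(j+1+2\gamma)}\|f_i\|_2^2\lesssim \nu\||\partial_y|^\gamma f_i\|_2^2+\nu^{(1-j)/(j+1+2\gamma)}\|u'f_i\|_2^2$ for functions supported near a critical point, and splits the resolvent analysis into three cases (spectral parameter near a critical value / intermediate / far) with a sign-changing test function in the intermediate case; these are the details your outline would naturally run into.
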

\begin{rmk}
The $k$-by-$k$ estimate \eqref{ED_k_by_k} implies that the solution to the hypoelliptic equation \eqref{EQ:Fractional_Hypoelliptic} gain Gevrey regularity in $x$-direction instantly. This gain in regularity is related to Hormander's hypoellipticity theorem, see, e.g.,  \cite{Kolmogoroff34,Hormander67, Nualart06, Malliavin97}.
\end{rmk}
\begin{rmk}
If the shear flow $u$ is analytic near the critical points $y_i^\star$, then the condition \eqref{relation:u'_and_y-y_i_star_j} holds.
\end{rmk}\begin{rmk}{Our argument does not provide the enhanced dissipation estimate in the regime $\gamma\in(0,1/2]$. The main reason is that our proof requires an apriori $L^\infty$-bound of the solutions to the resolvent equation. If  $\gamma\in(0,1/2]$, Sobolev embedding does not guarantee such $L^\infty$-estimate. However, a recent manuscript \cite{LiZhao21} seems to suggest that the enhanced dissipation estimate with rate $\nu^{\frac{j_m+1}{j_m+1+2\gamma}}$ might still hold in the range $\gamma\in(0,1/2]$. We will leave that as a conjecture to pursuit in the future.
}
\end{rmk}
\begin{rmk}{
If the shear flow profile is non-degenerate, i.e., $j_m=1$, then the enhanced dissipation rate (modulo logarithmic correction) is $\nu^{\frac{1}{1+\gamma}}$. When $\gamma=1$, we recover the classical rate $\nu^{1/2}$.}
\end{rmk}\begin{rmk}
The logarithmic loss here for $\gamma\in(1/2,1)$ comes from the estimation of the $\dot H^{1/2}$-semi-norm of specific functions. New ideas are needed to drop the logarithmic factor or extend the result to $\gamma\in(0,1/2]$. If $\gamma$ ranges from $[1,2]$, the $\dot H^1$-norm will be applied instead and no loss of $|\log (\nu|k|^{-1})|^\beta$ will appear in the dissipation rate \eqref{ED_k_by_k}. 
\end{rmk}

The second main theorem provides the enhanced dissipation for the equations \eqref{EQ:Fractional_PS} and \eqref{EQ:Fractional_k_by_k}.
\begin{theorem}\label{thm:semigroup_est_PS_full}
Consider the equation \eqref{EQ:Fractional_PS} subject to initial condition $n_{0}\in L^2(\Torus^2)$. Assume the conditions in Theorem \ref{thm:semigroup_est_PS}. Then there exists a viscosity threshold $\nu_0=\nu_0(u)$ such that if $\nu\leq \nu_0$,  the following enhanced dissipation estimate holds
\begin{align}\label{full_ED}&&
\|n (t)\|_2\leq C\|n_{0 }\|_2  e^{-\nu t-\delta_{ED} d (\nu)|\log\nu|^{-\beta(\gamma)} t},\quad \forall t\geq 0,\quad d(\nu)=\frac{j_m +1}{j_m +1+2\gamma},\, \gamma\in (1/2,2].
\end{align}
Here  the  constants $C>1 ,\,\delta_{ED}>0$ depend only  on the shear profile $u$. The parameter $\beta(\gamma)=8\gamma(1-\gamma)\mathbf{1}_{\gamma\in(1/2,1)}$ vanishes for $\gamma\in[1,2]$. 

For the $k$-by-$k$ system \eqref{EQ:Fractional_k_by_k} ($k\neq0$), the following estimate holds for constant $C,\, \delta_{ED}$ which only depend on $u(\cdot)$,
\begin{align}\label{full_ED_k_by_k}
&&\|\wh n_k(t)\|_2\leq C \|\wh n _{0;k}\|_2 e\xp{-\nu|k|^{2\gamma}t-\delta_{ED}\nu\xp{\frac{j_m +1}{j_m +1+2\gamma}}|k|\xp{\frac{2\gamma}{j_m +1+2\gamma}}|\log(\nu|k|^{-1})|^{-\beta(\gamma)} t},\,\forall t\geq 0,\,\gamma \in(1/2,2].
\end{align} 
\end{theorem}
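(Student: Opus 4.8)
The plan is to obtain Theorem~\ref{thm:semigroup_est_PS_full} as a direct consequence of Theorem~\ref{thm:semigroup_est_PS} together with a Plancherel decomposition in $x$. The starting point is that, after the Fourier transform in $x$, the only difference between the $k$-by-$k$ systems \eqref{EQ:Fractional_k_by_k} and \eqref{EQ:Fractional_Hypoelliptic_k_by_k} is the scalar term $-\nu|k|^{2\gamma}$, whose multiplier is constant in $y$ and hence commutes with both $u(y)\pa_x$ and $(-\de_y)^\gamma$. Thus, if $\wh n_k$ solves \eqref{EQ:Fractional_k_by_k} with datum $\wh n_{0;k}$, then $\wh v_k(t,y):=e^{\nu|k|^{2\gamma}t}\,\wh n_k(t,y)$ solves \eqref{EQ:Fractional_Hypoelliptic_k_by_k} exactly, with the same datum; applying the already-established estimate \eqref{ED_k_by_k} to $\wh v_k$ and multiplying back by $e^{-\nu|k|^{2\gamma}t}$ produces \eqref{full_ED_k_by_k} verbatim, with the same constants $C,\delta_{ED}$. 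No Duhamel or interpolation argument is needed, precisely because the extra horizontal dissipation enters as a scalar prefactor rather than as a perturbation of the generator, so this reduction is lossless.

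For the physical-space estimate \eqref{full_ED} I would sum \eqref{full_ED_k_by_k} over $k$. The zero-average constraint \eqref{zero_average} forces $\wh n_0(t)\equiv0$ (the $k=0$ mode solves a pure fractional heat equation from zero initial data), so by Plancherel $\|n(t)\|_2^2=\sum_{k\neq0}\|\wh n_k(t)\|_2^2$. It then suffices to check that the per-mode exponent
\[
\Lambda_k(\nu):=\nu|k|^{2\gamma}+\delta_{ED}\,\nu^{\frac{j_m+1}{j_m+1+2\gamma}}\,|k|^{\frac{2\gamma}{j_m+1+2\gamma}}\,\big|\log(\nu|k|^{-1})\big|^{-\beta(\gamma)}
\]
is, for every nonzero integer $k$, bounded below by its value at $|k|=1$, namely $\nu+\delta_{ED}\,\nu^{\frac{j_m+1}{j_m+1+2\gamma}}|\log\nu|^{-\beta(\gamma)}$. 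Since $|k|\geq1$ already gives $\nu|k|^{2\gamma}\geq\nu$, and since $|\log(\nu|k|^{-1})|=|\log\nu|+\log|k|$ for $0<\nu<1\leq|k|$, this reduces to the elementary inequality
\[
|k|^{\frac{2\gamma}{j_m+1+2\gamma}}\Big(\tfrac{|\log\nu|}{|\log\nu|+\log|k|}\Big)^{\beta(\gamma)}\ \geq\ 1 \qquad\text{for all nonzero integers }k.
\]
Writing $s=\log|k|\geq0$, the logarithm of the left-hand side equals $\tfrac{2\gamma}{j_m+1+2\gamma}\,s-\beta(\gamma)\log\big(1+\tfrac{s}{|\log\nu|}\big)$, which vanishes at $s=0$ and is nondecreasing in $s$ as soon as $|\log\nu|\geq \beta(\gamma)\tfrac{j_m+1+2\gamma}{2\gamma}$; this single requirement pins down the threshold $\nu_0=\nu_0(u)$, and is vacuous when $\beta(\gamma)=0$, i.e.\ for $\gamma\in[1,2]$. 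Consequently $\Lambda_k(\nu)\geq\nu+\delta_{ED}\,\nu^{\frac{j_m+1}{j_m+1+2\gamma}}|\log\nu|^{-\beta(\gamma)}$ for all $k\neq0$, and factoring $e^{-2\Lambda_1(\nu)t}$ out of the Plancherel sum and taking square roots yields \eqref{full_ED}.

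The only genuinely delicate point — what I would regard as the main obstacle — is the last inequality: one must verify that the logarithmic loss $|\log(\nu|k|^{-1})|^{\beta(\gamma)}$ accumulated at high horizontal frequencies is always dominated by the polynomial gain $|k|^{2\gamma/(j_m+1+2\gamma)}$, which is exactly the step that forces the smallness of $\nu_0$ in the range $\gamma\in(1/2,1)$. Everything else is bookkeeping, and in particular the constants $C$ and $\delta_{ED}$ can be inherited unchanged from Theorem~\ref{thm:semigroup_est_PS}.
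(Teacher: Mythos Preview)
Your proposal is correct and follows the same approach as the paper: observe that $e^{\nu|k|^{2\gamma}t}\,\wh n_k(t,y)$ solves the hypoelliptic $k$-by-$k$ equation, apply Theorem~\ref{thm:semigroup_est_PS}, multiply back, and sum over $k$ via Plancherel. The paper dispatches the summation in one sentence (``Summing up all $k$-modes yields \eqref{full_ED}''), whereas you actually verify the monotonicity of the per-mode exponent in $|k|$; your calculus argument is correct and supplies the detail the paper omits. One small remark: the smallness condition $|\log\nu|\geq \beta(\gamma)\tfrac{j_m+1+2\gamma}{2\gamma}$ is an \emph{additional} constraint on $\nu_0$ on top of the threshold already required for \eqref{ED_k_by_k}, not what ``pins it down'' by itself.
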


{\begin{rmk}
It is worth noting that our method can be adapted to provide the same enhanced dissipation estimate  for passive scalar solutions subject to the classical fraction dissipation operator $-(-\de)^\gamma=-(|\pa_x|^2+|\pa_y|^2)^{\gamma}$. Details of the adjustments are highlighted in Remark \ref{rmk:classical}.  
\end{rmk}
\begin{rmk}
Similar argument yields enhanced dissipation for shear flows whose profile $u$ are Lipschitz. Consider profile $u$ with finitely many critical points. Furthermore, assume that the absolute value of the derivatives of the profile are strictly positive whenever they exist, i.e., \\$\min_{y\in \Torus}\{|u'(y)|\,|\,u'(y) \text{ exists}.\}>c>0$. Then the enhanced dissipation estimate holds with rate $O(\nu^{\frac{1}{2\gamma+1}})$ (modulo logarithmic factors). The argument is similar to the proof of Theorem 4 in \cite{GongHeKiselev21}. %
\end{rmk}
}


Our analysis combines a spectral gap estimate in the spirit of the work \cite{BCZ15}  and the Gearhart-Pr\"uss type theorem proven in \cite{Wei18}. Furthermore, detailed resolvent estimates are carried out to prove the result. The resolvent estimate has found applications in various works in the hydrodynamics stability, see, e.g., \cite{ChenLiWeiZhang18, LiWeiZhang20, DingLin20}.

The paper is organized as follows: in section \ref{Section:2}, we present the proof of the main theorems; in section \ref{Sec:3}, we prove the main resolvent estimates (Proposition \ref{pro:resolvent_estimate}).  

\textbf{Notation: }Throughout the paper, the constant $B, \,C$ are constants independent of $\nu,\, k$ and are changing from line to line. In the section \ref{Sec:3}, the constant $C$ can depend on a small constant $\delta$ and we will specify when it happens. The constants $C_0,\,C_{1}, \, C_{\mathrm{spec}}, ...$ will be explicitly defined. The notations $T_1,\, T_2,\, T_3,\, ...$ denote terms in long expressions and will be recycled after the proof of each lemma. Hence the meanings of $T_{(\cdot)}$ change from lemma to lemma. {We use $|A|$ to denote the area of the set $A$.}

We consider the Fourier transform \emph{only in the $x$ variable}, and denote it and its inverse as
\begin{equation*}
\wh{f}_k(y) := \frac{1}{2\pi}\int_{-\pi}^\pi e^{-ikx} f(x,y) dx , \quad \check{g}(x,y) := \sum_{k=-\infty}^\infty g_k(y) e^{ikx}.
\end{equation*} 
If the function only depends on the $y$-variable, we use similar formulas to calculate the Fourier transform/inverse transform in $y$. 
The symbol $\lan{f}\ran$ represents average on the torus $\Torus$, i.e., $\dss\lan{f}\ran=\frac{1}{2\pi}\int_{\Torus} f(y)dy$. The symbol $\overline{f}$ denotes the complex conjugate. For any measurable function $m(\xi)$, we define the Fourier multiplier $m(\grad)f := (m(\xi)\widehat{f}(\xi))^{\vee}$. 
The $L^p_y$-norms are defined as 
\begin{align}\|g\|_{L^p_y}=\left(\int_{\Torus}|g(y)|^pdy\right)^{\frac{1}{p}}, \quad p\in[1,\infty),
\end{align} with natural extension to $p=\infty$. The $\dot H^\gamma$-seminorm and the $H^\gamma$-norm are  defined as follows:
\begin{align}
\|g\|_{\dot H^\gamma_y}^2=&\||\pa_y|^\gamma g\|_{L^2_y}^2=\sum_{\ell\in \mathbb{Z}}|\ell|^{2\gamma}|\wh{g}_\ell|^2,\quad 
\|g\|_{H_y^\gamma}^2=\|g\|_{\dot H_y^\gamma}^2+\|g\|_{L_y^2}^2.&
\end{align}
\section{Proof of Theorem \ref{thm:semigroup_est_PS} and Theorem \ref{thm:semigroup_est_PS_full}}\label{Section:2}
In this section, we prove Theorem \ref{thm:semigroup_est_PS} and Theorem \ref{thm:semigroup_est_PS_full}. The main goal is to derive the $k$-by-$k$ estimate \eqref{ED_k_by_k}, where $k$ is horizontal wave number. 

We make two preparations for the proof of the main theorem \ref{thm:semigroup_est_PS}. First, we reduce the problem for general wave number $k\in \mathbb{Z}\backslash \{0\}$ to the case $k=1$. Secondly, we present a semigroup estimate from \cite{Wei18}. 

If the estimate \eqref{ED_k_by_k} is proven for $k=1$, then by changing the sign of the shear $u(y)$, we obtain the estimate for $k=-1$. Now we consider the general case $k\in \mathbb{Z}\backslash\{ 0\}$ and rewrite the equation \eqref{EQ:Fractional_Hypoelliptic_k_by_k} as follows
\begin{align}
\frac{1}{|k|}\pa_t \wh \eta_k+iu(y)\frac{k}{|k|}\wh \eta_k=-\frac{\nu}{|k|}(-\de_y)^{\gamma}\wh \eta_k.%
\end{align}
By rescaling in time $\tau=|k|t$ and setting $\wt\nu = \frac{\nu}{|k|}$, we obtain that
\begin{align*}
\pa_\tau \wh \eta_k +iu(y)\frac{k}{|k|}\wh \eta_k=-\wt \nu(-\de_y)^{\gamma}\widehat \eta_k.
\end{align*}
Application of the enhanced dissipation estimate \eqref{ED_k_by_k} for $k=\pm 1$ yields the following estimate 
\begin{align}
\|\wh \eta_k(t)\|_2=\|\wh \eta_k(\tau)\|_2\leq C \|\wh \eta_{0;k}\|_2e^{-\delta_{ED} d(\wt \nu)|\log\wt \nu|^{-\beta(\gamma)}\tau }=C \|\wh \eta_k(0)\|_2e^{-\delta d( \nu|k|^{-1}) |k| |\log (\nu|k|^{-1})|^{-\beta(\gamma)}t}.
\end{align}
Now recalling the definition of $ d(\cdot )$ \eqref{ED_full} yields  the estimate  \eqref{ED_k_by_k} for  general $k\neq 0$. Hence in the remaining part of the paper, we focus on the $k=1$ case and drop the subscript $k$.

The second preparation involved in the proof is the Gearhart-Pr\"uss type theorem proven by D. Wei, \cite{Wei18}. The theorem translates spectral estimate into quantitative semigroup estimate under suitable conditions.  We  recall the key concepts in the paper \cite{Wei18}. Let $X$ be a complex Hilbert space. Let $H$ be a linear operator in $X$ with domain $D(H)$. Denote $\mathcal{B}(X)$ as the space of bounded linear operators on $X$ equipped with operator norm $\|\cdot\|$ and $I$ as the identity operator.  A closed operator $H$ 
is $m$-accretive if the set $\{\lambda|\mathrm{Re}\lambda<0\}$ is contained in the resolvent set of $H$, and 
\begin{align}
(H+\lambda I)^{-1}\in \mathcal {B}(X), \quad\| (H+\lambda I)^{-1}\|\leq (\mathrm{Re}\lambda)^{-1},\quad \forall \,\mathrm{Re}\lambda>0.
\end{align}
An $m$-accretive operator $H$ is accretive and densely defined. 
The $-H$ is a generator of a semigroup $e^{-tH}$. The decay rate of the semigroup $e^{-tH}$ is determined by the   following quantity
\begin{align}\label{Psi_H}
\Psi(H):=\inf \{\|(H-i\lambda I)f\|_X; f\in D(H), \lambda\in \rr, \|f\|_X=1\}.
\end{align} This is the content of the main theorem of the paper \cite{Wei18}.   
\begin{theorem}\label{thm:m_accretive}
Assume that $H$ is an $m$-accretive operator in a Hilbert space $X$. Then the following estimate holds:
\begin{align}
\|e^{-tH}\| \leq e^{-t\Psi(H)+\pi/2}, \quad \forall t\geq 0.
\end{align}
\end{theorem}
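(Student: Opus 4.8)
The plan is to follow the route of the Gearhart--Pr\"uss theorem: promote the spectral gap $\Psi(H)=\psi$ to a uniform resolvent bound in a vertical strip around the imaginary axis, and then run a Laplace--inversion / contour--shift argument, tracking constants carefully enough to reach the rate $\psi$ and the universal factor $e^{\pi/2}$.

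If $\Psi(H)=0$ there is nothing to prove, since $m$-accretivity alone gives $\|e^{-tH}\|\le 1\le e^{\pi/2}$; so assume $\psi:=\Psi(H)>0$. The quantity \eqref{Psi_H} is invariant under $\lambda\mapsto\lambda-\mu$, so $\|(H+i\lambda)g\|_X\ge\psi\|g\|_X$ for all $\lambda\in\rr$, $g\in D(H)$. Since $H+i\lambda$ is again $m$-accretive, $H+i\lambda+\varepsilon$ is invertible with $\|(H+i\lambda+\varepsilon)^{-1}\|\le(\psi-\varepsilon)^{-1}$ for $0<\varepsilon<\psi$; because the resolvent blows up near the spectrum, this uniform bound as $\varepsilon\downarrow 0$ forces $i\lambda\in\rho(-H)$ and $\|(i\lambda+H)^{-1}\|\le\psi^{-1}$. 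A Neumann series in the horizontal direction then gives $\{z:|\mathrm{Re}\,z|<\psi\}\subset\rho(-H)$ and $\|(z+H)^{-1}\|_{\mathcal{B}(X)}\le(\psi-|\mathrm{Re}\,z|)^{-1}$; in particular, for each $\psi'\in(0,\psi)$ the resolvent is holomorphic and bounded by $(\psi-\psi')^{-1}$ on $\{\mathrm{Re}\,z=-\psi'\}$.

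Now fix $t>0$, $\psi'\in(0,\psi)$ and $f$ in the dense subspace $D(H^2)$; there $\|(z+H)^{-1}f\|=O(|z|^{-2})$ along vertical lines, so the Bromwich representation $e^{-tH}f=\frac{1}{2\pi i}\int_{\mathrm{Re}\,z=\sigma}e^{zt}(z+H)^{-1}f\,dz$ (with $\sigma>0$) converges absolutely and is justified by classical Laplace inversion. Shifting the contour to $\mathrm{Re}\,z=-\psi'$ --- legitimate by holomorphy in the strip and the vanishing of the horizontal segments at $\mathrm{Im}\,z=\pm R$ as $R\to\infty$ --- yields
\[
e^{-tH}f=\frac{e^{-\psi't}}{2\pi}\int_{\rr}e^{i\lambda t}(-\psi'+i\lambda+H)^{-1}f\,d\lambda ,
\]
and letting $\psi'\uparrow\psi$ at the end will produce the decay rate $\psi$.

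The remaining step --- bounding this integral by $e^{\pi/2}\|f\|$ \emph{uniformly} in $\psi'$ and $t$ --- is the genuine difficulty. The obstruction is structural: $\|(-\psi'+i\lambda+H)^{-1}\|\le(\psi-\psi')^{-1}$ is only bounded, not integrable, in $\lambda$, and in the abstract $m$-accretive setting integration by parts in $\lambda$ merely raises the power of the resolvent without gaining decay, so the integral is only conditionally convergent and no crude estimate delivers a universal constant. The way through is a renormalization: subtract the imaginary-axis resolvent $(i\lambda+H)^{-1}f$ via the resolvent identity, so that the correction carries the extra factor $\psi'(i\lambda+H)^{-1}$ controlled by $\psi^{-1}$, and re-sum so that the surviving scalar integrals are of the elementary type $\int_{\rr}d\lambda/(\psi'^2+\lambda^2)=\pi/\psi'$; it is in the bookkeeping of this renormalization that $m$-accretivity must be used in tandem with the gap (not the gap alone) and the precise constant $e^{\pi/2}$ is forced. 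I expect this quantitative closure to be the main obstacle; it is exactly what is carried out in \cite{Wei18}. In the present paper the theorem is then applied with $H=\nu(-\de_y)^\gamma+iu(y)$, i.e.\ the $k=1$ equation, whose $m$-accretivity is immediate (the dissipation is nonnegative self-adjoint, the transport term skew-adjoint) and whose $\Psi(H)$ is bounded below by Proposition \ref{pro:resolvent_estimate}.
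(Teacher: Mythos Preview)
The paper does not prove this theorem at all: it is quoted verbatim as the main result of \cite{Wei18} and used as a black box. There is nothing to compare your argument against in the present paper.

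That said, your outline is a reasonable sketch of the Gearhart--Pr\"uss mechanism, and you correctly identify where the substance lies: turning the uniform resolvent bound on a vertical line into a semigroup estimate with the sharp exponential rate $\Psi(H)$ and, crucially, the \emph{universal} prefactor $e^{\pi/2}$. You are honest that your contour-shift-plus-renormalization step is only heuristic and that the quantitative closure ``is exactly what is carried out in \cite{Wei18}.'' In that sense your proposal, like the paper, ultimately defers to \cite{Wei18} for the actual proof. One caution: Wei's argument is not the contour-shift/Laplace-inversion route you describe; it proceeds by a direct differential-inequality (energy) argument on $\|e^{-tH}f\|^2$ that avoids the conditional convergence issues you flag. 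So while your sketch is not wrong in spirit, it is not the proof, and the renormalization you propose would need substantial work to make rigorous with a constant independent of $\psi'$ and $t$.
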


We define the function space $X$ to be $X=L^2$ and the differential operator to be 
\begin{align}{
H := \nu(-\de_y)^{\gamma} +iu(y),\quad \gamma\in(1/2,2].}
\end{align} 
The domain of the operator is $D(H)=H^{2\gamma}(\mathbb{T})$.  By testing the equation $(H+\lambda I) w=f$ by $w$ and taking the real part, we obtain that
\begin{align}
\nu \| |\pa_y |^\gamma w\|_2^2+\mathrm{Re}\lambda \|w\|_2^2=\mathrm{Re}\int f\overline{w}dy.
\end{align}
Hence if the real part of the spectral parameter $\mathrm{Re}\lambda>0$, we have that $\mathrm{Re}\lambda\|w\|_2\leq \|f\|_2$, which in turn yields that $\|(H+\lambda I)^{-1}\|\leq (\mathrm{Re}\lambda)^{-1}, \, \mathrm{Re}\lambda>0$. Therefore, the operator $H$ is $m$-accretive. 
 
Now we are ready to prove the key estimate \eqref{ED_k_by_k}.  
To apply Theorem \ref{thm:m_accretive}, we consider the following resolvent equation associated with the hypoelliptic passive scalar equation \eqref{EQ:Fractional_Hypoelliptic_k_by_k}
\begin{align}\label{Resolvent}
(H-i\lambda ) w= \nu (-\de_{y})^{\gamma}w+i (u(y)-\lambda) w=F .
\end{align}
Recall that the shear flow profile $u(\cdot)$  has critical values $\{u_{i}^\star=u(y_i^\star)\}_{i=1}^N$, which locate at critical points $\{y_i^\star\}_{i=1}^N$ with vanishing order $\{j_i\}_{i=1}^N$  \eqref{vanishing_order}.  We present the  following proposition, whose proof is postponed to Section \ref{Sec:3}.
\begin{pro}\label{pro:resolvent_estimate}
Consider the resolvent equation \eqref{Resolvent}. Assume conditions in Theorem \ref{thm:semigroup_est_PS}. Further assume that the spectral parameter $\lambda$ in \eqref{Resolvent} ranges on the real line $\rr$. Then  the following resolvent estimate holds if $\nu>0$ is smaller than a threshold $\nu_0=\nu_0(u)$,
\begin{align}
\nu^{\frac{1}{2}(1+\frac{j_m +1}{j_m +1+2\gamma})} \||\pa_y |^\gamma w\|_2 +\nu^{\frac{j_{m}+1}{j_{m}+1+2\gamma }} \|w\|_2\leq& C_\star(u) \|F\|_2,\quad \gamma\in[1,2];\\
\nu^{\frac{1}{2}(1+\frac{j_m +1}{j_m +1+2\gamma})} \||\pa_y |^\gamma w\|_2 +\nu^{\frac{j_{m}+1}{j_{m}+1+2\gamma }} \|w\|_2\leq& C_\star(u) |\log\nu|^{8\gamma(1-\gamma)}\|F\|_2,\quad \gamma\in(1/2,1).\label{Goal} 
\end{align}
Here $j_m $ is the maximal vanishing order of all the critical points \eqref{vanishing_order}, i.e., $j_m =\max_{i\in\{1,...,N\}}j_i$ . The constant $C_\star=C_\star(u)\geq 1$ depends only on the shear profile $u(\cdot)$.
\end{pro}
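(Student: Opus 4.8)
The plan is to prove Proposition~\ref{pro:resolvent_estimate} by a localization-and-patching argument on the resolvent equation \eqref{Resolvent}, exploiting the fact that the dangerous behavior only occurs near the critical points $\{y_i^\star\}$, and that away from those points the advection term $i(u(y)-\lambda)$ is nondegenerate and can be inverted with a good gain. First I would record the two basic energy identities obtained by testing \eqref{Resolvent} against $\overline{w}$: taking the real part gives $\nu\||\pa_y|^\gamma w\|_2^2 = \mathrm{Re}\int F\overline w\,dy$, and taking the imaginary part gives $\int (u(y)-\lambda)|w|^2\,dy = \mathrm{Im}\int F\overline w\,dy$. The real part already yields $\nu\||\pa_y|^\gamma w\|_2^2 \le \|F\|_2\|w\|_2$, so once the $\|w\|_2$ bound is in hand the $\dot H^\gamma$ bound in \eqref{Goal} follows immediately (note $\tfrac12(1+\tfrac{j_m+1}{j_m+1+2\gamma})$ is exactly the average of $1$ and the exponent on $\|w\|_2$). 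Hence the entire problem reduces to proving $\|w\|_2 \le C_\star(u)\,\nu^{-\frac{j_m+1}{j_m+1+2\gamma}}\|F\|_2$.

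For the $\|w\|_2$ estimate I would split $\Torus = \bigcup_i B(y_i^\star;R_i) \cup K$ where $K$ is the compact region on which $|u'(y)|$ is bounded below. On $K$, and more precisely on the part of $K$ where $|u(y)-\lambda|$ is not too small, one inverts the advection term directly; the set where $|u(y)-\lambda|$ is small is, by the lower bound on $|u'|$ away from critical points, a union of intervals of length $\lesssim |u-\lambda|_{\text{thresh}}$, and the imaginary energy identity controls the $L^2$ mass of $w$ there. The real work is near each critical point $y_i^\star$. There, using \eqref{relation:u'_and_y-y_i_star_j} so that $|u(y)-u_i^\star|\sim |y-y_i^\star|^{j_i+1}$, I would introduce the natural length scale $\delta_\nu \sim \nu^{1/(j_i+1+2\gamma)}$ (the balance point between the dissipation $\nu|\pa_y|^{2\gamma}$ and the shear $|y-y_i^\star|^{j_i}$), rescale $y - y_i^\star = \delta_\nu z$, and run a local weighted energy estimate: on the shrunk interval of width $\sim\delta_\nu$ one absorbs $w$ into $F$ via Cauchy--Schwarz losing a factor $\delta_\nu^{-(j_i+1)} \sim \nu^{-(j_i+1)/(j_i+1+2\gamma)}$, while outside that interval but still inside $B(y_i^\star;R_i)$ the term $|u-\lambda|^{-1}\le |u-u_i^\star - (\lambda-u_i^\star)|^{-1}$ is integrable against the fractional-diffusion gain. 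The maximum over $i$ of $(j_i+1)/(j_i+1+2\gamma)$ is attained at $j_i = j_m$, which produces the stated exponent $d(\nu) = (j_m+1)/(j_m+1+2\gamma)$. Throughout, the $\gamma\in(1/2,2]$ restriction and the logarithmic loss $|\log\nu|^{\beta(\gamma)}$ enter because for $\gamma\in(1/2,1)$ the relevant interpolation is through the $\dot H^{1/2}$ seminorm, whose cutoff/multiplier estimates on intervals carry a $\log$ factor (for $\gamma\ge 1$ one uses $\dot H^1$ and no log appears); I would isolate that loss in a single lemma about $\||\pa_y|^{1/2}(\chi w)\|_2$ for sharp cutoffs $\chi$.

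The patching step is then routine: choose a smooth partition of unity subordinate to the cover, commute the cutoffs through $H$ (the commutator $[\,|\pa_y|^\gamma,\chi_i]$ is lower order and for $\gamma\le 1$ is controlled by $\|\chi_i'\|_\infty$ times lower-order norms of $w$, which are reabsorbed since the $R_i$ are fixed, $\nu$-independent constants), apply the local estimate on each piece and the nondegenerate estimate on $K$, and sum. One must be slightly careful that the commutator errors come with a fixed constant $C(u)$ and not a power of $\nu^{-1}$ — this is where fixing $R_i$ independent of $\nu$ matters, and where a small-$\nu$ threshold $\nu_0(u)$ is used to absorb them.

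The main obstacle I expect is the sharp local estimate near a critical point of high vanishing order, specifically getting the correct power $\nu^{-(j_i+1)/(j_i+1+2\gamma)}$ with a constant depending only on $u$ (through $C_1(u)$ and $j_i$) and uniformly in the real spectral parameter $\lambda$ — in particular handling the case where $\lambda - u_i^\star$ itself sits at an intermediate scale between $\delta_\nu^{j_i+1}$ and $R_i^{j_i+1}$, so that the ``small $|u-\lambda|$'' region is an annulus away from $y_i^\star$ rather than a ball around it; there the local geometry of the level set $\{u = \mathrm{Re}\,\lambda\}$ must be combined with the same dissipation-vs-shear balance, and the fractional (nonlocal) nature of $|\pa_y|^{2\gamma}$ makes the cutoff arguments more delicate than in the $\gamma=1$ case treated in \cite{BCZ15,Wei18}.
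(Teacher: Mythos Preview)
Your proposal is correct and follows essentially the same architecture as the paper: the two energy identities from testing \eqref{Resolvent} with $\overline w$, a fixed ($\nu$-independent) partition of unity at the critical points, a three-way case distinction on the position of $\lambda$ relative to each $u_i^\star$, the $\dot H^{1/2}$ interpolation as the source of the logarithmic loss for $\gamma\in(1/2,1)$, and a final summation/absorption step with $\nu_0(u)$ small. The two places where the paper is more concrete than your sketch are the near-critical estimate, which it packages as a spectral-gap inequality $\nu^{\frac{j+1}{j+1+2\gamma}}\|f_i\|_2^2 \le C_{\mathrm{spec}}\,\nu\||\pa_y|^\gamma f_i\|_2^2 + C_{\mathrm{spec}}\,\nu^{\frac{1-j}{j+1+2\gamma}}\|u' f_i\|_2^2$ (proved precisely by your rescaling $y-y_i^\star=\delta_\nu z$ together with $\gamma>1/2$ Sobolev embedding), and the intermediate-annulus case you flag as the main obstacle, which it handles by testing against a sign-changing weight $\phi_i\approx\mathrm{sign}(u-\lambda)$ smoothed on the set $\{|u-\lambda|\lesssim\delta\nu^{(j+1)/(j+1+2\gamma)}\}$ rather than by inverting $|u-\lambda|^{-1}$ directly.
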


Combining Theorem \ref{thm:m_accretive} and Proposition  \ref{pro:resolvent_estimate} yields the $k$-by-$k$ estimate \eqref{ED_k_by_k}.  Summing up all $k$-modes yields the estimate \eqref{ED_full}. This concludes the proof of Theorem \ref{thm:semigroup_est_PS}. The proof of the estimates in Theorem \ref{thm:semigroup_est_PS_full} follows from the observation that $\wh n_k(t, y) e^{\nu|k|^{2\gamma}t}$ solves the equation \eqref{EQ:Fractional_Hypoelliptic_k_by_k}. Therefore the $L^2$-norm is bounded as in  \eqref{ED_k_by_k}, i.e.,
\begin{align}{
\|\wh n _k(t)e^{\nu|k|^{2\gamma} t}\|_2\leq C \|\wh n_{0;k}\|_2 e\xp{-\delta_{ED}\nu\xp{\frac{j_m +1}{j_m +1+2\gamma}}|k|\xp{\frac{2\gamma}{j_m +1+2\gamma}}|\log(\nu|k|^{-1})|^{-\beta} t},\quad\forall t\geq  0,\quad \gamma\in(1/2,2].}
\end{align} By multiplying both side by $e^{-\nu|k|^{2\gamma}t}$ , we obtain the  estimate \eqref{full_ED_k_by_k}. Summing up all $k$-modes yields \eqref{full_ED}. Hence the proof of Theorem \ref{thm:semigroup_est_PS_full} is completed.
{
\begin{rmk}\label{rmk:classical}
The above arguments can be adapted to treat the passive scalar solutions subject to classical fractional dissipation operator $(-\de)^{\gamma}=(|\pa_x|^2+|\pa_y|^2)^{\gamma},\, \gamma\in(1/2,2]$. One of the adjustments is that one will not re-scale the time variable to get rid of $|k|$. As a result, we consider the operator $H_{k,\nu}=\nu(|k|^2+|\pa_y|^2)^\gamma+iu(y)k$ and its resolvent. The constructions of augmented functions in the next section are similar. We refer the interested readers to the appendix of \cite{CotiZelatiDolceFengMazzucato} for the treatment in the bi-Laplacian case. 
\end{rmk}}
\section{Proof of Proposition \ref{pro:resolvent_estimate}} 
\label{Sec:3}
In this section, we prove Proposition \ref{pro:resolvent_estimate}. 

Following the paper \cite{BCZ15}, we first introduce a partition of unity on the torus and localize the solution $w$ to \eqref{Resolvent} around each critical point $y_i^\star$. To this end, we consider $2r_i$-neighborhood $B(y_i^\star;2r_i)$ around each critical point $y_i^\star$ for $0<r_i\leq \frac{1}{10}\pi$. Further assume that the dilated balls $\{B(y_i^\star;4r_i) \}_{i=1}^N$ are pair-wise disjoint. Next we define $\{\xi_i^2\}_{i=0}^N$ to be a partition of unity on the torus such that $\xi_i\in C^\infty(\mathbb{T})$ for $i\in \{0,1, ...,N\}$ and $\mathrm{support}(\xi_i)=B(y_i^\star;2r_i)$ for $i\in \{1,2,...,N\} $\myb{(Check!? If we consider the standard smooth partition of unity, it has the form $\exp\{-\frac{1}{1-x^2}\}$ near the zero, and taking root will not change the qualitative behavior.)}. Moreover, for the index $i$ ranges from $1$ to $N$, $\xi_i(y)\equiv 1$ in the neighborhood $B(y_i^\star; r_i)$ and decays to zero as $y$ approaches $  \pa B(y_i^\star;2r_i)$.  
The function $\xi_0^2=1-\sum_{i=1}^N\xi_i^2$ has support away from all the critical points and hence $\min_{y\in \mathrm{support}\xi_0}|u'(y)| \geq \frac{1}{C(u)}$ for some positive constant $C(u)>0$ which is independent of $\nu$. 
At each critical point $(y_i^\star, u_i^\star)$ with vanishing order $j_i$, by Taylor's theorem, the shear flow profile has the following expansion
\begin{align}
u(y)-u_i^\star =\frac{u^{(j+1)}(y_i^\star)}{(j+1)!}(y-y_i^\star)^{j+1}+O(|y-y_i^\star|^{j+2}).
\end{align}
We choose the radius $r_i$ small enough such that on the neighborhood $B(y_i^\star;2r_i)$, the first term on the right hand side dominates, i.e.,
\begin{align}
\frac{1}{C_0(u)}|y-y_i^\star|^{j+1}\leq |u(y)-u_i^\star|\leq C_0(u) |y-y_i^\star|^{j+1},\quad \forall y\in\mathrm{support} (\xi_i),\, i\in\{1,2,...,N\}.\label{non-degenerate_0}
\end{align}
Here the constant $C_0\geq 1$ only depends on the shear profile.  Moreover, we can choose $r_i\leq \frac{1}{4}R_i$ in \eqref{relation:u'_and_y-y_i_star_j}, such that on the support of $\xi_i,\, i\neq 0$, the following relation holds
\begin{align}
\frac{1}{C_1(u)}|y-y_i^\star|^j\leq |u'(y)|\leq C_1(u) |y-y_i^\star|^j,\quad \forall y \in\mathrm{support} (\xi_i),\, i\in\{1,2,...,N\} . \label{non-degenerate}
\end{align}
Since the above choice of $r_i$ depends only on the shear profile $u$, there exists a constant $C(u)$, which is independent of the viscosity $\nu$, such that the following estimate holds
\begin{align}\label{Est_xi_i}
\|\xi_i\|_{W^{4,\infty}(\Torus)}\leq C(u),\quad \forall  i\in \{0,1,2,..., N\}.
\end{align}

Next we present some energy relations associated to the equation \eqref{Resolvent} and specify the interesting range of the spectral parameter $\lambda$. By testing the equation \eqref{Resolvent} against the conjugate $\overline{w}$ and taking the real and imaginary part, we obtain that
\begin{align}
\nu  \||\pa_y|^\gamma w\|_2^2  =\mathrm{Re}\int_\Torus F \overline{w}dy\leq& \|F\|_2\|w\|_2\label{Real}\\
\int_\Torus (u(y)-\lambda)|w|^2dy=&\mathrm{Im}\int_\Torus F \overline{w}dy,\label{Imaginary}
\end{align} 
Testing the equation \eqref{Resolvent} with $g \overline{w}$, where $g$ is any smooth real-valued function on $\Torus$,  yields the following equation  
\begin{align}
\int_\Torus (u(y)-\lambda)g|w|^2dy = &\mathrm{Im}\int _\Torus F \overline{w}g dy-\nu\mathrm{Im} \int_\Torus |\pa_y|^\gamma w |\pa_y|^\gamma (g\overline{w})dy.\label{Imaginary_with_f}
\end{align}
Direct application of these energy equalities ensures the estimate \eqref{Goal} given that the spectral parameter $\lambda$ is away from the range of $u(\cdot )$. This is the content of the next lemma.
\begin{lem}\label{lem:away_from_range}
Assume that the spectral parameter $\lambda\in\rr$ is away from the range of shear profile $u(\cdot )$ in the sense that 
\begin{align}
\min_{y\in \Torus}|\lambda-u(y)|\geq \delta\nu^{\frac{j_m +1}{j_m +1+2\gamma}} .\label{away_from_range} 
\end{align}
Here the small parameter $\delta\in(0,1)$ is independent of the viscosity $\nu$ and $j_m$ is the maximal vanishing order of the shear $u$. Then following estimate holds
\begin{align}
\nu^{1+\frac{j_m +1}{j_m +1+2\gamma}} \||\pa_y |^\gamma w\|_{L^2_y}^{2}+\nu^{2\frac{j_m+1}{j_m+1+2\gamma}}\|w\|_{L^2_y}^2\leq C(\delta^{-1})\|F\|_{L^2_y}^2.
\end{align}
\end{lem}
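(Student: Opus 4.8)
The plan is to exploit the hypothesis \eqref{away_from_range} directly through the two basic energy identities \eqref{Real} and \eqref{Imaginary}. From the imaginary-part identity \eqref{Imaginary}, since $u(y)-\lambda$ never vanishes and in fact $|u(y)-\lambda|\geq \delta\nu^{\frac{j_m+1}{j_m+1+2\gamma}}$ for all $y$, we can bound from below
\begin{align*}
\delta\nu^{\frac{j_m+1}{j_m+1+2\gamma}}\|w\|_2^2 \leq \int_\Torus |u(y)-\lambda|\,|w|^2\,dy = \left|\mathrm{Im}\int_\Torus F\overline{w}\,dy\right| \leq \|F\|_2\|w\|_2,
\end{align*}
which immediately gives $\|w\|_2 \leq \delta^{-1}\nu^{-\frac{j_m+1}{j_m+1+2\gamma}}\|F\|_2$, i.e.\ the desired bound $\nu^{2\frac{j_m+1}{j_m+1+2\gamma}}\|w\|_2^2 \leq \delta^{-2}\|F\|_2^2$. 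Strictly speaking one needs that $\int_\Torus(u-\lambda)|w|^2$ has modulus at least $\min|u-\lambda|\,\|w\|_2^2$; since $u-\lambda$ is real-valued this is just the triangle/monotonicity inequality $\left|\int (u-\lambda)|w|^2\right| \geq \min_y|u(y)-\lambda|\int|w|^2$ when $u-\lambda$ has constant sign, and more generally one splits $\Torus$ into $\{u>\lambda\}$ and $\{u<\lambda\}$; but here the cleanest route is $|u-\lambda|\geq \delta\nu^{\cdots}$ pointwise, so $\int|u-\lambda||w|^2 \geq \delta\nu^{\cdots}\|w\|_2^2$, and this is $\geq |\mathrm{Im}\int F\overline{w}|$ only after noting $\left|\int(u-\lambda)|w|^2\right|$ could a priori be smaller than $\int|u-\lambda||w|^2$ — so I will instead use that $u-\lambda$ has a sign on each of the (finitely many) subintervals determined by level sets, or simply observe that condition \eqref{away_from_range} together with continuity of $u$ and connectedness forces $u-\lambda$ to have constant sign on all of $\Torus$, hence $\left|\int(u-\lambda)|w|^2\right| = \int|u-\lambda||w|^2 \geq \delta\nu^{\frac{j_m+1}{j_m+1+2\gamma}}\|w\|_2^2$.

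Once the $L^2$ bound on $w$ is in hand, the gradient bound follows from the real-part identity \eqref{Real}:
\begin{align*}
\nu\||\pa_y|^\gamma w\|_2^2 = \mathrm{Re}\int_\Torus F\overline{w}\,dy \leq \|F\|_2\|w\|_2 \leq \delta^{-1}\nu^{-\frac{j_m+1}{j_m+1+2\gamma}}\|F\|_2^2.
\end{align*}
Multiplying through by $\nu^{\frac{j_m+1}{j_m+1+2\gamma}}$ gives $\nu^{1+\frac{j_m+1}{j_m+1+2\gamma}}\||\pa_y|^\gamma w\|_2^2 \leq \delta^{-1}\|F\|_2^2$. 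Adding the two displayed bounds yields the claimed estimate with $C(\delta^{-1}) = 2\delta^{-2}$ (absorbing $\delta^{-1}\leq \delta^{-2}$ since $\delta<1$).

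There is no real obstacle here; the only subtlety worth spelling out is the sign argument justifying $\left|\int_\Torus(u-\lambda)|w|^2\right| \geq \min_y|u(y)-\lambda|\,\|w\|_2^2$, which I would handle by remarking that \eqref{away_from_range} prevents $u-\lambda$ from changing sign on the connected torus $\Torus$. I would write this lemma in two short paragraphs: first the $L^2$ estimate from \eqref{Imaginary} and \eqref{away_from_range}, then the $\dot H^\gamma$ estimate from \eqref{Real}, and conclude. This is exactly the ``easy regime'' carved off before the bulk of the work in Section \ref{Sec:3}, where $\lambda$ lies within distance $\delta\nu^{\frac{j_m+1}{j_m+1+2\gamma}}$ of the range of $u$ and the localization around critical points becomes necessary.
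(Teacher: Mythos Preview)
Your proposal is correct and follows essentially the same approach as the paper: first the $L^2$ bound from the imaginary-part identity \eqref{Imaginary} together with the fixed-sign observation (the paper states this explicitly as ``the fact that $u(y)-\lambda$ has fixed sign under the constraint \eqref{away_from_range}''), then the $\dot H^\gamma$ bound from the real-part identity \eqref{Real}. Your remark that \eqref{away_from_range} forces $\lambda$ outside the connected interval $u(\Torus)$, hence $u-\lambda$ has constant sign, is exactly the justification the paper invokes in one phrase.
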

\begin{rmk}
The parameter $\delta=\delta(u)>0$ will be chosen in \eqref{Choice_of_delta}. Hence the estimate we obtain is consistent with \eqref{Goal}.
\end{rmk}
\begin{proof} 
The $L^2$-estimate is the key.  
Applying the relation  \eqref{Imaginary}, the fact that $u(y)-\lambda$ has fixed sign under the constraint \eqref{away_from_range}, and the  H\"older inequality yields the following estimate 
\begin{align}
\delta\nu^{\frac{j_m +1}{j_m +1+2\gamma}} \|w\|_2\leq C \|F\|_2.\label{L2_est_away_from_range}
\end{align}
Combining the relation \eqref{Real} and the $L^2$-estimate \eqref{L2_est_away_from_range}, we have the higher regularity norm estimate
\begin{align}\label{H1_est_away_from_range}
\nu\||\pa_y|^\gamma w\|_2^2 \leq C\delta^{-1} \nu^{-\frac{j_m +1}{j_m +1+2\gamma}} \|F\|_2^2.
\end{align}
Combining the inequalities \eqref{L2_est_away_from_range} and \eqref{H1_est_away_from_range} yields the result. 
\end{proof}
Hence we focus on the case where the spectral parameter $\lambda$ is close to the range of the shear profile $u$, i.e., \begin{align}\min_{y\in\Torus}|\lambda- u(y)|\leq \delta\nu^{\frac{j_m +1}{j_m +1+2\gamma}}  .
\end{align} 
Here $\delta (u)$ is chosen in \eqref{Choice_of_delta}. 
We decompose the $L^2$-norm $\|w\|_2^2$ into $N+1$ pieces with the partition of unity $\xi_i^2$,
\begin{align}
\|w\|_2^2=\sum_{i=0}^{N}\|w\xi_i\|_2^2.
\end{align}
Our primary goal is to derive $L^2$-estimate on each component $w\xi_i,\,\forall i\in\{0,1,...,N\} $, 
\begin{align}
\|w\xi_i\|_2^2\leq &C(B,u,\delta^{-1},j_i)\nu^{-2\frac{j_{i}+1}{j_{i}+1+2\gamma}}|\log \nu|^{16\gamma\al(\gamma) }  \|F\|_2^2\\
&+\left(\frac{1}{B}+C(u,\delta^{-1}, j_i)\nu^{ \frac{1}{j_{{m}}+1+2\gamma}}|\mathrm{log}\nu|^{4\al(\gamma) }\right)\|w\|_2^2, \label{Goal_i_by_i}
\end{align}
Here the universal constant $B\geq 1$ is arbitrary and will to be determined at the end of the proof \eqref{Choice_of_B_delta_nu_0}. The $j_i,\, i\neq 0$ is the vanishing order of the critical points \eqref{vanishing_order}. If $i=0$, $j_0=0$. The $j_m$ is the maximal vanishing order. 
The parameter $\al=\al(\gamma)$ is 
\begin{align}\label{al }
\al(\gamma)=\mathbf{1}_{\gamma\in(1/2,1)}(1-\gamma). \quad
\end{align}
In the latter part of the proof, if the constant $C$ depends on $B$ or $\delta$, we will explicitly spell out.
  
To prove the estimate \eqref{Goal_i_by_i}, we first consider the components $\|w\xi_i\|_2^2$ with $i\neq 0$. We distinguish between three cases based on the relative position of $\lambda$ and the value $u_i^\star=u(y_i^\star)$ at each critical point $y_i^\star,\, \forall i\in\{1,2,...N\}$: 
\begin{align}
a) &\quad i\in \mathcal{I}_{\mathrm{near}}:\quad |\lambda-u_{i}^\star|\leq \delta \nu^{\frac{j_i+1}{j_i+1+2\gamma}} ;\label{Case a}\\
b) &\quad  i\in \mathcal{I}_{\mathrm{interm}}:\quad |\lambda-u_{i}^\star|\geq \delta\nu^{\frac{j_i+1}{j_i+1+2\gamma}} ,\quad\mathrm{dist}(y_i^\star,\{z|u(z)=\lambda\})\leq 3r_i;\label{Case b}\\ 
c) &\quad i\in \mathcal{I}_{\mathrm{far}}:\quad\mathrm{dist}(y_i^\star,\{z|u(z) =\lambda\}) \geq  3r_i .
\label{Case c}
\end{align}
Here $2r_i$ is the radius of the support of $\xi_i,\, \forall i\in\{1,2,...,N\}$. We will prove the primary estimate \eqref{Goal_i_by_i} in case $a)$, $b)$, and $c)$ in Lemma \ref{lem:case a}, Lemma \ref{lem:case b} and Lemma \ref{lem:case c}, respectively.  
Finally, we estimate the component $\|w\xi_0\|_2^2$ in Lemma \ref{lem: 0}. Once the estimate \eqref{Goal_i_by_i} is established, by summing all the contributions from different components, and taking $B^{-1}$ and then $\nu_0$ small enough, we will obtain  the estimate \eqref{Goal}.

Before proving the estimate \eqref{Goal_i_by_i} for $i\in \mathcal{I}_{\mathrm{near}}$, we introduce a crucial spectral gap estimate, which also plays a central role in \cite{BCZ15}.
\begin{lem}\label{lem:Spectral_gap}
Assume condition \eqref{relation:u'_and_y-y_i_star_j} and let $f\in H^{\gamma}(\Torus)$. Consider a critical point $y_i^\star$ of the shear flow profile $u$ with vanishing order $j=j_i\geq 1$ \eqref{vanishing_order}. The function $f_i=f\xi_i$ is supported in the $2r_i$-neighborhood of the critical point $y_i^\star$. Then the following estimate holds for some constant $C_{\mathrm{spec}}(u)\geq 1$, 
\begin{align}\label{Spectral_gap}
\nu^{\frac{j+1}{j+1+2\gamma}}\|f_i\|_{L^2(\Torus)}^2\leq C_{\mathrm{spec}}(u)\nu\|| \pa_y| ^{\gamma}f_i\|_{L^2(\Torus)}^2+C_{\mathrm{spec}}(u) \nu^{\frac{1-j}{j+1+2\gamma}}\|u' f_i\|_{L^2(\Torus)}^2,\quad \gamma\in\left(\frac{1}{2},2\right]. 
\end{align}
\end{lem}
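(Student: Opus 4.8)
The plan is to prove the spectral gap \eqref{Spectral_gap} by an interpolation/scaling argument that balances the three quantities against each other at the natural length scale near the critical point. First I would reduce to the critical point being at the origin, write $j=j_i$, and record that on $\mathrm{support}(\xi_i)$ one has $\frac{1}{C_1}|y|^{j}\leq|u'(y)|\leq C_1|y|^{j}$ by \eqref{non-degenerate}, so that $\|u'f_i\|_{L^2}^2$ is comparable to $\int |y|^{2j}|f_i(y)|^2\,dy$. The key scaling heuristic: if $f_i$ were concentrated at scale $L$ near the origin, then $\||\pa_y|^\gamma f_i\|_2^2\sim L^{-2\gamma}\|f_i\|_2^2$ and $\|u'f_i\|_2^2\sim L^{2j}\|f_i\|_2^2$, and the right-hand side is minimized over $L$ by balancing $\nu L^{-2\gamma}\sim \nu^{\frac{1-j}{j+1+2\gamma}}L^{2j}$, i.e. $L^{2j+2\gamma}\sim \nu^{\frac{2j+2\gamma}{j+1+2\gamma}}$, giving $L\sim \nu^{\frac{1}{j+1+2\gamma}}$ and common value $\nu\cdot L^{-2\gamma}=\nu^{1-\frac{2\gamma}{j+1+2\gamma}}=\nu^{\frac{j+1}{j+1+2\gamma}}$, which is exactly the prefactor on the left. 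So the inequality is scale-invariant and the content is an uncertainty-principle-type lower bound.

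Concretely, I would split $\|f_i\|_2^2=\int_{|y|\leq L}|f_i|^2 + \int_{|y|> L}|f_i|^2$ with $L=(C\nu)^{\frac{1}{j+1+2\gamma}}$ for a suitable constant. On the far region, $|u'|\gtrsim L^{j}$, so $\int_{|y|>L}|f_i|^2\leq C L^{-2j}\|u'f_i\|_2^2 = C\,\nu^{-\frac{2j}{j+1+2\gamma}}\|u'f_i\|_2^2$; multiplying by $\nu^{\frac{j+1}{j+1+2\gamma}}$ gives $\nu^{\frac{j+1-2j}{j+1+2\gamma}}\|u'f_i\|_2^2=\nu^{\frac{1-j}{j+1+2\gamma}}\|u'f_i\|_2^2$ as desired. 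For the near region $\{|y|\leq L\}$, I would use a localized fractional Gagliardo--Nirenberg / Sobolev inequality: for $g$ supported in an interval of length $\sim L$, $\|g\|_{L^2(|y|\le L)}^2 \leq C\big(L^{2\gamma}\||\pa_y|^\gamma g\|_{L^2}^2 + L^{-1}\|g\|_{L^1}^2\big)$ type control, or more simply bound $\int_{|y|\le L}|f_i|^2$ using that on a short interval the low-frequency part is controlled by the $L^1$-mass (which is in turn $\leq L^{1/2}\|f_i\|_2$ on that interval, a term that can be absorbed for $L$ small) and the high-frequency part by $L^{2\gamma}\||\pa_y|^\gamma f_i\|_2^2$. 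After multiplying by $\nu^{\frac{j+1}{j+1+2\gamma}}$ and using $L^{2\gamma}=(C\nu)^{\frac{2\gamma}{j+1+2\gamma}}$, the resulting coefficient of $\||\pa_y|^\gamma f_i\|_2^2$ is $\nu^{\frac{j+1+2\gamma}{j+1+2\gamma}}\cdot C = C\nu$, matching the first term on the right-hand side. Choosing the constant in $L$ large enough (depending only on $u$, through $C_1$ and the Sobolev constants) lets us absorb the leftover self-term.

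The restriction $\gamma\in(1/2,2]$ enters precisely in the near-region step: controlling the value/average of $f_i$ on a short interval by its $\dot H^\gamma$ seminorm requires $\gamma>1/2$ (the Sobolev embedding $\dot H^\gamma\hookrightarrow C^0$ in one dimension, or equivalently the trace/point-evaluation estimate), which is also flagged in the third remark of the paper. For $\gamma>1/2$ the inequality $|f_i(y_0)|^2\lesssim L^{-1}\|f_i\|_{L^2(I)}^2 + L^{2\gamma-1}\||\pa_y|^\gamma f_i\|_{L^2}^2$ on an interval $I\ni y_0$ of length $L$ holds with constant depending only on $\gamma$; integrating over the near region produces exactly the $L^{2\gamma}\||\pa_y|^\gamma f_i\|_2^2$ term plus an absorbable $\|f_i\|_{L^2(I)}^2$ term. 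I expect the main obstacle to be making this localized fractional Sobolev inequality clean and uniform: the nonlocality of $|\pa_y|^\gamma$ means one must either use a cutoff and estimate the commutator $[\,|\pa_y|^\gamma,\text{cutoff}\,]$, or argue via the Gagliardo seminorm representation on the torus; getting constants that depend only on $u$ (not on $\nu$ or $L$) requires care, but is standard. Once that estimate is in hand, the proof is just the two-region split plus the scaling bookkeeping above.
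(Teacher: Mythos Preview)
Your overall strategy is correct and is essentially the paper's approach: split at the natural length scale $L=\nu^{1/(j+1+2\gamma)}$, control the far region $\{|y-y_i^\star|>L\}$ by the lower bound $|u'(y)|\gtrsim |y-y_i^\star|^{j}\ge L^{j}$, and control the near region via the Sobolev embedding $H^\gamma\hookrightarrow L^\infty$ (this is exactly where $\gamma>1/2$ is used). The paper carries this out by a rescaling $y\mapsto \kappa(y-y_i^\star)$ with $\kappa=L^{-1}$ and an auxiliary small parameter $\varepsilon$, but unwinding that rescaling gives precisely your near/far split.

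There is, however, one genuine gap in the near-region step as you wrote it. You propose the localized pointwise bound
\[
|f_i(y_0)|^2\ \lesssim\ L^{-1}\|f_i\|_{L^2(I)}^2+L^{2\gamma-1}\||\pa_y|^\gamma f_i\|_{L^2}^2,\qquad y_0\in I,\ |I|\sim L,
\]
and then say that integrating over $I$ produces $L^{2\gamma}\||\pa_y|^\gamma f_i\|_2^2$ plus an ``absorbable'' $\|f_i\|_{L^2(I)}^2$. But integrating literally gives
\[
\|f_i\|_{L^2(I)}^2\ \le\ C_\gamma\,\|f_i\|_{L^2(I)}^2+C_\gamma L^{2\gamma}\||\pa_y|^\gamma f_i\|_2^2,
\]
with a fixed constant $C_\gamma$ of order one on the self-term; this cannot be absorbed. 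The paper avoids this by instead bounding $\int_{|y-y_i^\star|\le L}|f_i|^2\le CL\,\|f_i\|_{L^\infty(\Torus)}^2$ and applying the \emph{global} Gagliardo--Nirenberg inequality
\[
\|f_i-\langle f_i\rangle\|_{L^\infty(\Torus)}^2\ \lesssim\ \|f_i\|_{L^2(\Torus)}^{2-1/\gamma}\,\||\pa_y|^\gamma f_i\|_{L^2(\Torus)}^{1/\gamma}.
\]
The extra factor $L$ from the area of the near region then lets Young's inequality produce $\epsilon\|f_i\|_{L^2(\Torus)}^2+C(\epsilon)L^{2\gamma}\||\pa_y|^\gamma f_i\|_{L^2(\Torus)}^2$, with $\epsilon$ genuinely small, plus an $O(L)\|f_i\|_2^2$ term which is also small. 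With this single replacement, your argument matches the paper's proof.
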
\begin{rmk}
As being discussed in the paper \cite{BCZ15} (pages $12$-$13$), the estimate \eqref{Spectral_gap} is related to the spectral gap of the differential operator $L:=(-\de_z)^\gamma+|z|^{2j}$ on $\rr$.
\end{rmk}
\begin{proof}  
First, we show that the following estimate on the torus implies \eqref{Spectral_gap},
\begin{align}\label{Spectral_gap_sigma}
\sigma^{\frac{j}{j+\gamma}}\|f_i\|_{L^2(\Torus)}^2\leq C\sigma \||\pa_y|^\gamma f_i\|_{L^2(\Torus)}^2+C\| |y-y_i^\star|^j f_i\|_{L^2(\Torus)}^2.
\end{align} 
Here the parameter $\sigma\in (0,1)$ is any small enough number. Since $f_i=f\xi_i$ is localized near the $i$-th critical point $y_i^\star$, the last term makes sense. Combining the estimate \eqref{Spectral_gap_sigma} and the condition \eqref{relation:u'_and_y-y_i_star_j}, we obtain
\begin{align}
\sigma^{\frac{j}{j+\gamma}}\|f_i\|_{L^2(\mathbb{T})}^2\leq C \sigma\||\pa_y|^\gamma f_i\|_{L^2(\Torus)}^2+C\|u 'f_i\|_{L^2(\Torus)}^2.
\end{align}
By setting $\sigma=\nu^{2\frac{j+\gamma}{j+1+2\gamma}}$ in the above  inequality, we have \eqref{Spectral_gap}.

To prove the estimate \eqref{Spectral_gap_sigma}, we first consider the $z\in \rr$ variable. On $\rr$, the following estimate holds
\begin{align}
\|g\|_{L^2(\rr)}^2\leq \int_\rr\psi_{[-2\varepsilon,2\varepsilon]}(z)|g(z)|^2dz+C(\varepsilon^{-1})\int_\rr |g(z)|^2|z|^{2j}dz,
\end{align}where $\varepsilon>0$ is a universal small constant. 
Here, $\psi_{[-2\varepsilon,2\varepsilon]}$ is a smooth cut-off function which is $1$ on $[-\varepsilon,\varepsilon]$ and has support in $[-2\varepsilon, 2\varepsilon]$. Now we make the change of variables $g(z)=f_i(\kappa^{-1}z+y_i^\star)=(f\xi_i)(\kappa^{-1}z+y_i^\star), \,\, y-y_i^\star=\kappa^{-1}z,$ for $\kappa>1$. In the $y$-coordinate, the estimate above can be rewritten as follows :
\begin{align}
\kappa\int_\rr |f_i(y)|^2dy\leq &\kappa\|f_i\|_{L^\infty(\Torus)}^2\int_\rr \psi_{[-2\varepsilon,2\varepsilon]}(\kappa (y-y_i^\star))  dy+C(\varepsilon^{-1})\kappa^{2j+1} \int_\rr |f_i(y)|^2|y-y_i^\star|^{2j}dy\\
\leq& C\varepsilon\|f_i\|_{L^\infty(\Torus)}^2+C(\varepsilon^{-1})\kappa^{2j+1}\int_\rr |f_i(y)|^2|y-y_i^\star|^{2j}dy.
\end{align} Now since $f_i$ is localized, we have that the estimate also holds with the integral domain $\rr$ replaced by  the torus $\mathbb{T}$. Since $\gamma>1/2$, the $L^\infty$-norm can be controlled through $H^\gamma$-norm. Now applying the H\"older inequality, Young inequality and Gagliardo-Nirenberg interpolation inequality yields that
\begin{align*}
\kappa&\int_{\mathbb{T}} |f_i|^2dy\\
\leq &C \varepsilon(\|f_i-\lan{f_i}\ran\|_{L^\infty(\Torus)}^2+\lan f_i\ran ^2)+C(\varepsilon^{-1})\kappa^{2j+1} \int_\Torus |f_i|^2|y-y_i^\star|^{2j}dy\\
\leq& C_{GN}\varepsilon\|f_i-\lan{f_i}\ran\|_{L^2(\Torus)}^{2-\frac{1}{\gamma}}\||\pa_y|^\gamma f_i\|_2^{\frac{1}{\gamma}}+C\varepsilon{|\Torus|}\|f_i\|_2^2+C(\varepsilon^{-1})\kappa^{2j+1} \int |f_i|^2|y-y_i^\star|^{2j}dy\\
\leq&\frac{1}{2}\kappa\|f_i\|_{L^2(\Torus)}^2+C_{GN}\kappa^{1-2\gamma}\||\pa_y|^\gamma f_i\|_{L^2(\Torus)}^2+C\varepsilon {|\Torus|}\|f_i\|_2^2+C(\varepsilon^{-1})\kappa^{2j+1}\|f_i|y-y_i^\star|^j\|_{L^2(\Torus)}^2. 
\end{align*} 
Recalling that $\kappa>1$, choosing $\varepsilon$ small enough and reorganizing the terms yield that
\begin{align}
\kappa^{-2j}\|f_i\|_{L^2(\mathbb{T})}^2\leq& C\kappa^{-2j-2\gamma}\||\pa_y|^\gamma f_i\|_{L^2(\mathbb{T})}^2+C\||y-y_i^\star|^jf_i\|_{L^2(\Torus)}^2.
\end{align}
Now we set $\sigma=\kappa^{-2j-2\gamma}\leq 1$, then $\kappa^{-2j}=(\kappa^{-2j-2\gamma})^{\frac{2j}{2j+2\gamma}}=\sigma^{\frac{j}{j+\gamma}}$. 
As a result, we have derived the spectral gap estimate \eqref{Spectral_gap_sigma}. This concludes the proof.
\end{proof}



\ifx
The following spectral gap estimate in \cite{BCZ15} (ARMA version Remark 2.8) is crucial for the analysis
\begin{align}
\sigma ^{\frac{j_i}{j_i+1}}\|f\xi_i\|_2^2\leq& C\sigma\|\pa_y (f\xi_i)\|_2^2+C\| u' f\xi_i\|_2^2, \label{spectral_gap}
\end{align}
where $C$ is a universal constant. 
(Here $\xi_i$ is similar to the $\sqrt{\widetilde{\phi_i}}$ in the paper \cite{BCZ15}.) The small parameter $\sigma$ is any positive constant small enough. 
\fi

Next we consider critical points $\{y_i^\star\}_{i\in  \mathcal{I}_{\mathrm{near}}}$ in case a) \eqref{Case a}. 
\begin{lem} \label{lem:case a}
Assume the conditions in Theorem \ref{thm:semigroup_est_PS}. Assume that both of the following conditions hold: 

\noindent
a) the parameter $\delta\in(0,1)$ is small, i.e., 
\begin{align}\label{Choice_of_delta}
0< \delta(u)\leq \frac{1}{9}C_1^2(u)C_{\mathrm{spec}}(u),
\end{align}where $C_1$ is defined in \eqref{relation:u'_and_y-y_i_star_j} and $C_{\mathrm{spec}}$ is defined in \eqref{Spectral_gap}; 

\noindent
b) the threshold $\nu_0>0$ is smaller than a constant depending only on $u$ and $\delta$. 

\noindent
Then the estimate \eqref{Goal_i_by_i} holds for $i\in \mathcal{I}_{\mathrm{near}}$ \eqref{Case a}.
\end{lem}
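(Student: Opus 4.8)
The plan is to localize $w$ around the critical point $y_i^\star$ via the spectral gap estimate of Lemma~\ref{lem:Spectral_gap}, and then dispose of the two terms it produces using the energy identities \eqref{Real}, \eqref{Imaginary_with_f}. Writing $j=j_i$, applying \eqref{Spectral_gap} to $f_i=w\xi_i$ and dividing by $\nu^{\frac{j+1}{j+1+2\gamma}}$ gives
\begin{align*}
||w\xi_i||_2^2\leq C_{\mathrm{spec}}(u)\,\nu^{\frac{2\gamma}{j+1+2\gamma}}\,|||\pa_y|^\gamma(w\xi_i)||_2^2+C_{\mathrm{spec}}(u)\,\nu^{-\frac{2j}{j+1+2\gamma}}\,||u'\,w\xi_i||_2^2=:\mathrm{I}+\mathrm{II}.
\end{align*}
For $\mathrm{I}$ I would use the fractional Leibniz bound $|||\pa_y|^\gamma(w\xi_i)||_2\leq C(u)(|||\pa_y|^\gamma w||_2+||w||_2)$, valid for $\gamma\in(1/2,2]$ by \eqref{Est_xi_i}, together with \eqref{Real}, i.e.\ $\nu|||\pa_y|^\gamma w||_2^2\leq ||F||_2||w||_2$; writing $\nu^{\frac{2\gamma}{j+1+2\gamma}}|||\pa_y|^\gamma w||_2^2=\nu^{-\frac{j+1}{j+1+2\gamma}}\cdot\nu|||\pa_y|^\gamma w||_2^2$ and using Young's inequality (parameter depending on $B,C_{\mathrm{spec}},C(u)$) converts $\mathrm{I}$ into $C(B,u)\nu^{-2\frac{j+1}{j+1+2\gamma}}||F||_2^2+(\tfrac1{2B}+C(u)\nu^{\frac{2\gamma}{j+1+2\gamma}})||w||_2^2$, which carries no logarithm and, since $\gamma>\tfrac12$ and $j\leq j_m$ force $\tfrac{2\gamma}{j+1+2\gamma}\geq\tfrac1{j_m+1+2\gamma}$, is already of the shape \eqref{Goal_i_by_i}.

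The term $\mathrm{II}$ is the heart of the matter. Set $d:=\nu^{\frac1{j+1+2\gamma}}$ and split $\mathrm{supp}\,\xi_i$ into a \emph{core} $\{|y-y_i^\star|\leq\rho_0 d\}$ --- equivalently, by \eqref{non-degenerate_0}, the set where $|u-u_i^\star|\lesssim\nu^{\frac{j+1}{j+1+2\gamma}}$ --- and its complement, with $\rho_0=\rho_0(\delta,u)$ the small constant pinning down the core. On the core, \eqref{non-degenerate} gives $|u'|^2\leq C_1(u)^2(\rho_0 d)^{2j}$, so, since $\nu^{-\frac{2j}{j+1+2\gamma}}=d^{-2j}$, the core contribution to $\mathrm{II}$ is at most $C_{\mathrm{spec}}(u)C_1(u)^2\rho_0^{2j}||w\xi_i||_2^2\leq\tfrac12||w\xi_i||_2^2$, absorbed into the left-hand side once $\rho_0$, hence $\delta$, is small as in \eqref{Choice_of_delta}; this is precisely where hypothesis~(a) enters. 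On the complement, the hypothesis $i\in\mathcal{I}_{\mathrm{near}}$ \eqref{Case a} forces every zero of $u-\lambda$ near $y_i^\star$ into the core, so there $u-\lambda$ is sign-definite on each connected piece and $|u(y)-\lambda|\geq c(u)|u(y)-u_i^\star|\geq c(u)|y-y_i^\star|^{j+1}$; combining this with \eqref{non-degenerate} and the uniform bound $|u-\lambda|\leq C(u)$ yields the pointwise comparison $|u'(y)|^2\leq C(u)|u(y)-\lambda|$ off the core. Hence the complement contribution to $\mathrm{II}$ is $\leq C(u)\nu^{-\frac{2j}{j+1+2\gamma}}\int(u-\lambda)g\,|w|^2$ for a smooth real $g$ supported off the core, comparable to $\xi_i^2$ on the complement, of size $O(1)$, and sign-matched to $u-\lambda$ on each piece, and the test-function identity \eqref{Imaginary_with_f} then bounds $\int(u-\lambda)g|w|^2$ by $||g||_\infty||F||_2||w||_2+\nu|||\pa_y|^\gamma w||_2\,|||\pa_y|^\gamma(g\overline{w})||_2$.

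It remains to estimate the viscous term, which is where the logarithm (for $\gamma<1$) originates. A fractional Leibniz rule gives $|||\pa_y|^\gamma(g\overline{w})||_2\lesssim ||g||_\infty|||\pa_y|^\gamma w||_2+||w||_{L^\infty}|||\pa_y|^\gamma g||_2$, with $||w||_{L^\infty}$ controlled by $|||\pa_y|^\gamma w||_2$ and $||w||_2$ via $H^\gamma\hookrightarrow L^\infty$ ($\gamma>\tfrac12$) and Gagliardo--Nirenberg, and then \eqref{Real} and Young's inequality reduce everything to the form of \eqref{Goal_i_by_i}. The factor requiring care is $|||\pa_y|^\gamma g||_2$: the cutoff $g$ has a transition layer of width only $\sim\rho_0 d=\rho_0\nu^{\frac1{j+1+2\gamma}}$, so its $\dot H^\gamma$ norm costs a (too-lossy) negative power $(\rho_0 d)^{\frac12-\gamma}$ of $\nu$ for $\gamma>\tfrac12$. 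For $\gamma\in[1,2]$ one sidesteps this by integrating by parts once in \eqref{Imaginary_with_f} and using the $\dot H^1$ norm of $g$, which is affordable and loss-free, so $\al(\gamma)=0$; for $\gamma\in(1/2,1)$ one instead measures $g$ only in $\dot H^{1/2}$ (permissible since $\gamma>\tfrac12$, interpolating $|\pa_y|^\gamma$ against the $H^\gamma$-controlled factor $w$), and a smooth cutoff of a unit interval with transition width $h$ satisfies $|||\pa_y|^{1/2}g||_2^2\sim\log(1/h)\sim|\log\nu|$; carrying this single logarithm through the Gagliardo--Nirenberg and Young estimates produces the $|\log\nu|^{16\gamma\al(\gamma)}$ on $||F||_2^2$ and $|\log\nu|^{4\al(\gamma)}$ on $||w||_2^2$. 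Assembling $\mathrm{I}$, the absorbed core part of $\mathrm{II}$, and the complement part of $\mathrm{II}$, choosing the Young parameters to build the $\tfrac1B$-coefficient, and taking $\nu_0=\nu_0(u,\delta)$ small (hypothesis~(b)) so that the residual $C(u,\delta^{-1})\nu^{\frac1{j_m+1+2\gamma}}|\log\nu|^{4\al}$ is $<\tfrac12$, gives \eqref{Goal_i_by_i}.

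The main obstacle I anticipate is the $\nu$-power bookkeeping inside $\mathrm{II}$: the weights available from \eqref{Real} and \eqref{Imaginary_with_f} must be balanced simultaneously against the $\nu^{-\frac{2j}{j+1+2\gamma}}$ prefactor and the small transition scale of $g$, and for higher vanishing order $j\geq2$ a single application of the test-function identity on the whole complement looks too crude --- one most likely has to decompose the complement dyadically in $|y-y_i^\star|$ into annuli of size $2^m\rho_0 d$, run the estimate annulus by annulus, and sum a borderline geometric/$|\log\nu|$ series --- while in the range $\gamma\in(1/2,1)$ one must extract exactly the stated power of $|\log\nu|$, and no worse, from the $\dot H^{1/2}$ norms of the cutoffs.
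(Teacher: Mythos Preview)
Your overall architecture---spectral gap plus the weighted identity \eqref{Imaginary_with_f}---is right, and for the nondegenerate case $j_i=1$ your scheme closes with the correct powers. But for $j=j_i\geq 2$ there is a genuine gap in the $\nu$-bookkeeping that the dyadic fix you propose does \emph{not} repair.

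The issue is the order of operations. You apply the spectral gap \eqref{Spectral_gap} to the full localization $w\xi_i$, which saddles term $\mathrm{II}$ with the prefactor $\nu^{-\frac{2j}{j+1+2\gamma}}$. On the complement of the core you then bound $|u'|^2\leq C(u)|u-\lambda|$ and invoke \eqref{Imaginary_with_f}, so the $F$-contribution is at best
\[
C\,\nu^{-\frac{2j}{j+1+2\gamma}}\,\|F\|_2\|w\|_2\;\leq\;C(B)\,\nu^{-\frac{4j}{j+1+2\gamma}}\|F\|_2^2+\tfrac{1}{B}\|w\|_2^2,
\]
and $\tfrac{4j}{j+1+2\gamma}>\tfrac{2(j+1)}{j+1+2\gamma}$ precisely when $j\geq 2$. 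A dyadic decomposition into annuli $|y-y_i^\star|\sim 2^m\rho_0 d$ does not help: the annulus contribution picks up $2^{m(j-1)}d^{-(j+1)}$, the sum is dominated by the largest $m$ (where $2^m\sim r_i/d$), and you land back at $d^{-2j}=\nu^{-\frac{2j}{j+1+2\gamma}}$. The loss is intrinsic to using \eqref{Spectral_gap} in a region where $|u'|$ is $O(1)$: the inequality is only sharp near the critical point, and the $\nu^{\frac{1-j}{j+1+2\gamma}}\|u'f_i\|_2^2$ term is a bad overestimate once $|y-y_i^\star|\gg d$.

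The paper's cure is to reverse the order: introduce a further partition $1=\psi_i^0+\psi_i^++\psi_i^-$ with $\psi_i^0$ supported in the core $|y-y_i^\star|\lesssim\delta^{\frac{1}{j+1}}d$ \emph{before} touching the spectral gap. On $\psi_i^0$ the spectral gap is applied and the $u'$-term is absorbed exactly as you do (this is where \eqref{Choice_of_delta} enters). On $\psi_i^\pm$ one does \emph{not} pass through \eqref{Spectral_gap} at all; instead one uses the lower bound $|u-\lambda|\geq\delta\nu^{\frac{j+1}{j+1+2\gamma}}$ directly to write
\[
\|w\psi_i^\pm\xi_i\|_2^2\leq \delta^{-1}\nu^{-\frac{j+1}{j+1+2\gamma}}\int (u-\lambda)(\psi_i^\pm\xi_i)^2|w|^2\,dy,
\]
and now the prefactor is $\nu^{-\frac{j+1}{j+1+2\gamma}}$ rather than $\nu^{-\frac{2j}{j+1+2\gamma}}$, which after \eqref{Imaginary_with_f} and Young gives exactly the target $\nu^{-\frac{2(j+1)}{j+1+2\gamma}}\|F\|_2^2$. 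The $\dot H^\gamma$-cost of the $\nu$-dependent cutoffs $\psi_i^s$ (and the logarithm for $\gamma\in(1/2,1)$, via the $\dot H^{1/2}$ bound you correctly identify) is then paid in both Step~2 and Step~3, not only in your term $\mathrm{II}$.
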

\begin{proof} We focus on one critical point $y_i^\star$ and drop the subscript $i$ in the vanishing order $j_i$.  There are three main steps. In the first step, we introduce suitable cut-off functions and estimate their Sobolev norms. In the second and third step, we carry out the main estimates of the $L^2$-norm. Throughout the proof, we will choose the viscosity threshold $\nu_0(u,\delta)$ small in several occasions, and the final viscosity threshold will be chosen as the minimum of all. 

\noindent
\textbf{Step \#1: }Cut-off functions and their Sobolev norms. 
\ifx
By choosing $\sigma=(\nu|k|^{-1})^{2\frac{j_i+1}{j_i+1+2\gamma}}$ in \eqref{spectral_gap}, we end up with the following spectral gap estimate for $j\geq 1$,\begin{align}
\nu^{\frac{j_i+1}{j_i+1+2\gamma}} ^{\frac{2\gamma}{j_i+1+2\gamma}}\|f\xi_i\|_2^2\leq C\nu \|\pa_y(f\xi_i )\|_2^2+C\nu^{\frac{1-j_i}{j_i+1+2\gamma}} ^{\frac{2(j_i+1)}{j_i+1+2\gamma}}\|u' f\xi_i\|_2^2.\label{spectral_gap_j} 
\end{align}
This spectral gap estimate appears as equation (3.4) in \cite{BCZ15}. To simplify the notation, we will use $j$ to denote the vanishing order $j_i$.
\fi

We define a smooth partition of unity on the domain $\Torus$, i.e., $1=\psi_i^{0}+\psi_i^{+}+\psi_i^{-}$. First we choose the viscosity threshold $\nu_0(u)$ small so that $4\sqrt{C_0}\nu_0^{\frac{1}{j+1+2\gamma}}\leq \frac{1}{20}r_i$, where $C_0$ and $r_i$ are defined in \eqref{non-degenerate_0} and \eqref{non-degenerate}. The function $\psi_i^0$ has the following properties:
\begin{align}
1)\quad& \mathrm{support}{(\psi_i^0)}=B(y_i^\star; 4\sqrt{C_0(u)}\delta^{\frac{1}{j+1}}\nu^{\frac{1}{j+1+2\gamma}});  \label{psi_0_property_1}\\
2)\quad& \psi_i^0(y)\equiv1,\quad \forall y\in B(y_i^\star;2\sqrt{C_0(u)}\delta^{\frac{1}{j+1}}\nu^{\frac{1}{j+1+2\gamma}}); \label{psi_0_property_2}\\
3) \quad&|u(y)-\lambda| \geq  \delta \nu^{\frac{j+1}{j+1+2\gamma}} ,\quad \forall y\in \mathrm{support}(\xi_i)\backslash B(y_i^\star;  2\sqrt{C_0(u)}\delta^{\frac{1}{j+1}}\nu^{\frac{1}{j+1+2\gamma}}). \label{psi_0_property_3}
\end{align}
Here $C_0(u)\geq 1$ is defined in \eqref{non-degenerate_0}. 
We check  the last property as follows. By the condition \eqref{non-degenerate_0}, the assumption \eqref{Case a},  and  $j=j_i\geq1$, we observe that for $y\in \mathrm{support}(\xi_i)\backslash B(y_i^\star;  2\sqrt{C_0(u)}\delta^{\frac{1}{j+1}}\nu^{\frac{1}{j+1+2\gamma}})$,
\begin{align}
|\lambda-u(y)|\geq& \|u(y)-u_i^\star|-|\lambda-u_i^\star\|\geq \frac{1}{C_0(u)}|y-y_i^\star|^{j+1}-\delta\nu^{\frac{j+1}{j+1+2\gamma}}\\
\geq &\left({2^{j+1}C_0 ^{\frac{j+1}{2}}} C_0 ^{-1}-1\right)\delta\nu^{\frac{j+1}{j+1+2\gamma}}\geq  \delta\nu^{\frac{j+1}{j+1+2\gamma}}.
\end{align} 
The supports of the functions $\psi_i^+ $ and $\psi_i^- $ are adjacent to the support of  $\psi_i^0$. Since the support of the functions $\psi_i^\pm \xi_i^2$ are included in $\mathrm{support}(\xi_i)\backslash B(y_i^\star;  2\sqrt{C_0(u)}\delta^{\frac{1}{j+1}}\nu^{\frac{1}{j+1+2\gamma}})$, previous argument yields that \begin{align}
|u(y)-\lambda|\geq  \delta \nu^{\frac{j+1}{j+1+2\gamma}} ,\quad \forall y\in\mathrm{support}(\psi_i^\pm \xi_i^2).\label{psi_pm_property}
\end{align} 

Next we estimate the Sobolev norms. Since all three functions transition on interval of size $\mathcal{O}(\delta^{\frac{1}{j+1}}\nu^{\frac{1}{j+1+2\gamma}})$, their $\dot H^1,\dot H^2$-seminorms are bounded as follows
\begin{align}\label{psi_i_bound}&&& \|\pa_y \psi_i^s\|_2\leq C(u)\delta^{-\frac{1}{2(j+1)}}\nu^{-\frac{1}{2(j+1+2\gamma)}} ,\quad \|\pa_y^2 \psi_i^s\|_2\leq C(u)\delta^{-\frac{3}{2(j+1)}}\nu^{-\frac{3}{2(j+1+2\gamma)}} ,\quad\forall s\in\{0,+,-\}.
\end{align}
To estimate the $\dot H^\gamma$-seminorm for $\gamma\in(1/2,1)$, the $\dot H^{1/2}$-norm of the partition functions $\psi_i^s$ are required:
\begin{align}\|\psi_i^s\|_{\dot H^{1/2}}\leq C(u,\delta^{-1}, j)|\log (\nu  )|,\quad s\in\{0,+,-\}.\label{psi_i_bound_H1/2}
\end{align} 
The explicit estimate is as follows. Denote $\Lambda:=2\sqrt{C_0}\delta^{\frac{1}{j+1}} \nu^{\frac{1}{j+1+2\gamma}} $. Recall the Fourier characterization of the $\dot H^{\frac{1}{2}}$-seminorm: $\|\psi_i^s-\lan{\psi_i^s}\ran\|_{\dot H^{1/2}}^2=C\sum_{\ell\neq 0}|\wh \psi_i^s(\ell)|^2|\ell|$, where $\wh \psi_i^s(\ell)=\frac{1}{2\pi}\int_{-\pi}^\pi \psi_i^s(y) e^{- i \ell y}dy$.  
  Now we have the following two relations which are consequences of the integration by parts and the relation $e^{- i \ell y}=-\frac{1}{  i \ell}\frac{d}{dy}e^{-  i \ell y}$: 
\begin{align}
|\wh \psi^s_i(\ell)|\leq& \frac{1}{|\ell|}\int_{\Torus} |\pa_y \psi_i^s|dy\leq C\frac{1}{|\ell|},\, \ell\neq 0;\\
|\wh\psi^s_i(\ell)|\leq &\frac{1}{|\ell|^2}\int_{\Torus}|\pa_{yy}\psi_i^s|dy \leq \frac{1}{|\ell|^2}\Lambda^{-1},\, \ell\neq 0.
\end{align}
Now we estimate the $\dot H^{1/2}$-seminorm as follows
\begin{align*}
\|\psi_i^s\|_{\dot H^{1/2}}^2=&C\left(\sum_{0<|\ell|<\Lambda^{-1}}+\sum_{ |\ell|\geq\Lambda^{-1}}\right)|\wh \psi_i^s(\ell)|^2|\ell|
\leq C\sum_{0<|\ell|< \Lambda^{-1}}\frac{1}{|\ell|}+C\sum_{|\ell|\geq\Lambda^{-1}}\frac{\Lambda^{-2}}{|\ell|^3}\leq C\log\Lambda^{-1}+C.
\end{align*}
As a result, we have that $\|\psi_i^s\|_{\dot H^{1/2}}\leq C(u,\delta^{-1},j)|\log ( \nu )|$.

\ifx
\myb{Can  we  apply the Toland formula?:
\begin{align}
\|u\|_{\dot H^{1/2}}^2=C\int_{-\pi}^\pi\int_{-\pi}^\pi\left(\frac{u(x)-u(y)}{\sin(\frac{x-y}{{2}})}\right)^2dxdy.
\end{align}Now only the place where $|x-y|\leq \eta$ play roles. And the difference is nonzero only near (distance$\approx \eta$) the transition threshold. Now in this region, we have that 
\begin{align}
\int_{(y_i^\star-4\eta, y_i^\star+4\eta)}\int_{(y_i^\star-4\eta, y_i^\star+4\eta)}\frac{|u(x)-u(y)|^2}{|\sin(\frac{x-y}{2})|^2}dxdy\leq C\frac{1}{\eta^2}\eta^2=C. 
\end{align}But this is not all. There is a transition region, and we are kind of integrating a $1/|x|$ across scales.  Even in this formula, I saw a log loss. }\fi

Finally, we apply Gagliardo-Nirenberg interpolation inequality to derive the $\dot H^\gamma$ semi-norm,
\begin{align}\|\psi_i^s\|_{\dot H^{\gamma}}\leq& C_{GN}\|\psi_i^s\|_{\dot H^{1/2}}^{2-2\gamma}\|\psi_i^s\|_{\dot H^1}^{2\gamma-1}\leq C(u,\delta^{-1},j)|\log \nu|^{2-2\gamma}\nu^{-\frac{2\gamma-1}{2(j+1+2\gamma)}},\quad\gamma\in(1/2,1);\\\|\psi_i^s\|_{\dot H^{\gamma}}\leq& C_{GN}\|\psi_i^s\|_{\dot H^1}^{2-\gamma}\|\psi_i^s\|_{\dot H^2}^{\gamma-1}\leq C(u,\delta^{-1},j)\nu^{-\frac{2\gamma-1}{2(j+1+2\gamma)}},\quad\gamma\in[1,2].&
\end{align}%
We combine these two estimates with the parameter $\al(\gamma)$ \eqref{al }:
\begin{align}\label{H_gamma_psi}
\|\psi_i^s\|_{\dot H^{\gamma}}\leq C(u,\delta^{-1},j_i)\nu^{-\frac{2\gamma-1}{2(j+1+2\gamma)}}|\log \nu|^{2\al(\gamma)},\quad \gamma\in(1/2,2].
\end{align}

We apply the Minkowski inequality to decompose the $L^2$-norm as follows
\begin{align}
\|w\xi_i\|_2^2\leq 3\|w {\psi_i^0}\xi_i\|_2^2+3\|w  {\psi_i^+}\xi_i\|_2^2+3\|w {\psi_i^-}\xi_i\|_2^2.\label{case_a_decomposition}
\end{align}
This concludes step \#1.

\noindent
\textbf{Step \#2: } Estimation of the $L^2$-norm $\|w\psi_i^0\xi_i\|_2^2$. 
We apply the following product rule for $f,g\in H^\gamma\cap L^\infty$:
\begin{align}\label{product_rule}
\||\pa_y|^\gamma (fg)\|_{L^2_y(\Torus)}\leq C\|f\|_{H_y^\gamma(\Torus)}\|g\|_{L^\infty_y(\Torus)}+C\|f\|_{L^\infty_y(\Torus)}\|g\|_{H_y^\gamma(\Torus)}
\end{align} The proof of the product rule on $\rr$ can be found in various textbooks (see, e.g., appendix of \cite{Tao06}.) and a small modification yields \eqref{product_rule}. Application of the spectral gap \eqref{Spectral_gap} and the product rule yields that  \myb{((A.17) of Terry Tao's book on dispersive equations ($\rr$). One can also check the lecture notes by S. Klainerman.)  
}
\begin{align}
\nu^{\frac{j+1}{j+1+2\gamma}}  \|w \psi_i^0\xi_i\|_2^2&
\leq C_{\mathrm{spec}}(u)\nu \||\pa_y|^\gamma(w \psi_i^0\xi_i)\|_2^2+C_{\mathrm{spec}}(u)\nu^{\frac{1-j}{j+1+2\gamma}} \|u'w \psi_i^0\xi_i\|_2^2\\
&\leq C(u)\nu(\||\pa_y|^\gamma w \|_2^2+\|w\|_2^2)\|\psi_i^0\xi_i\|_\infty^2+C(u)\nu\|w \|_\infty^2(\||\pa_y|^\gamma(\psi_i^0\xi_i)\|_2^2+\|\psi_i^0\xi_i\|_2^2)\\
&\quad+C_{\mathrm{spec}}(u)\nu^{\frac{1-j}{j+1+2\gamma}} \|u'w \psi_i^0\xi_i\|_2^2\\
&=:T_1+T_2+T_3.\label{Case_a_T123}
\end{align}Combining the fact that $\|\xi_i\|_\infty,\, \|\psi_i^0\|_\infty$ are bounded by $1$, and the estimate \eqref{Real}, we obtain that the first term is bounded, i.e., \begin{align}\label{Case_a_T_1}
T_1\leq C(u)\nu(\||\pa_y|^\gamma w\|_2^2+\|w\|_2^2)\leq C(u)\|F\|_2\|w\|_2+C(u)\nu\|w\|_2^2.
\end{align} To estimate the $T_2$ term, we first estimate the quantity $\||\pa_y|^\gamma(\psi_i^0\xi_i)\|_2$ with the product rule \eqref{product_rule}, the $\xi_i$-estimate \eqref{Est_xi_i}, the $L^\infty$-bounds $\|\xi_i\|_\infty+\|\psi_i^0\|_\infty\leq 2$, and the $\dot H^\gamma$-estimate of $\psi_i^s$ \eqref{H_gamma_psi} as follows
\begin{align}
\||\pa_y|^{\gamma}(\psi_i^0\xi_i)\|_2\leq& C(u,\|\xi_i\|_{H^2})(\|\psi_i^0\|_{\dot H^{\gamma}}+1)
\leq C(u,\delta^{-1}, j)|\log (\nu)|^{2\al(\gamma)}\nu^{\frac{1-2\gamma}{2(j+1+2\gamma)}} .\label{pay_gamma_psi_xi_L2}
\end{align}
Now we combine \eqref{pay_gamma_psi_xi_L2} with the $L^\infty$-bounds $\|\xi_i\|_\infty+\|\psi_i^0\|_\infty\leq 2$, and apply H\"older inequality, Young inequality and Gagliardo-Nirenberg interpolation inequality to derive the following 
\begin{align}
T_2\leq &C(u,\delta^{-1},j)\nu \left(\||\pa_y|^\gamma w \|_2^{\frac{1}{\gamma}}\|w \|_2^{2-\frac{1}{\gamma}}+\|w \|_2^2\right)\nu^{\frac{1-2\gamma}{j+1+2\gamma}}|\log( \nu)| ^{4\al(\gamma)} \\
\leq &C(B,u,\delta^{-1}, j)\nu|\log\nu|^{8\gamma\al(\gamma)}\||\pa_y|^\gamma w\|_2^2+\left(\frac{1}{B}\nu^{\frac{j+1}{j+1+2\gamma}}+C(u,\delta^{-1},j)\nu^{\frac{j+2}{j+1+2\gamma}}|\log\nu|^{4\al(\gamma)}\right)\|w\|_2^2
\end{align}
Now we  apply the relation \eqref{Real}  to obtain the following
\begin{align}\label{Case_a_T_2}
T_2
\leq C(B,u,\delta^{-1}, j)|\log\nu|^{8\gamma\al(\gamma)}\|F\|_2\|w\|_2+\nu^{\frac{j+1}{j+1+2\gamma}}\left(\frac{1}{B}+C(u,\delta^{-1},j)\nu^{\frac{1}{j+1+2\gamma}}|\log\nu|^{4\al(\gamma)}\right)\|w\|_2^2.
\end{align}
To estimate the last term $T_3$ in \eqref{Case_a_T123}, we apply the condition \eqref{non-degenerate} that $|u'(y)|\leq C_1(u) |y-y_i^\star|^{j}\leq C_1(u)\delta^{\frac{j}{j+1}}\nu^{\frac{j}{j+1+2\gamma}}$ on the support of $\psi_i^0\xi_i$ and choice of $\delta(u)$ \eqref{Choice_of_delta} to obtain 
\begin{align}
T_3\leq& C_{\mathrm{spec}}(u)C_1^2(u)\delta^{\frac{2j}{j+1}}\nu^{\frac{1+j}{j+1+2\gamma}}\|w\psi_i^0\xi_i\|_2^2\\
\leq &\left(C_{\mathrm{spec}}(u)C_1^2(u)\delta\right) \nu^{\frac{1+j}{j+1+2\gamma}}\|w\psi_i^0\xi_i\|_2^2\leq \frac{1}{9}\nu^{\frac{1+j}{j+1+2\gamma}}\|w\psi_i^0\xi_i\|_2^2.\label{Case_a_T_3}
\end{align}
\ifx 
and , and As a result, 
\begin{align}
\nu^{\frac{j+1}{j+1+2\gamma}}&\|w \psi_i^0\xi_i\|_2^2\\
\leq&C\nu\||\pa_y|^\gamma w \|_2^2\|\psi_i^0\xi_i\|_\infty^2+C(\delta^{-1})\nu|\log( \nu)| ^{4\al(\gamma)}\||\pa_y|^\gamma w \|_2^{\frac{1}{\gamma}}\|w \|_2^{2-\frac{1}{\gamma}}\nu^{\frac{1-2\gamma}{j+1+2\gamma}} \\
&+C(\delta^{-1})\nu\|w \|_2^2|\log( \nu)| ^{4\al(\gamma)}\nu^{\frac{1-2\gamma}{j+1+2\gamma}} + C\nu^{\frac{1-j}{j+1+2\gamma}} \delta^{\frac{2j}{j+1}}\nu^{\frac{2j}{j+1+2\gamma}} \|w \psi_i^0\xi_i\|_2^2.
\end{align}

Here the crucial point is that the constant in the last term does not depend on the parameter $\delta^{-1}$ and hence one can take the $\delta$ small enough such that the last term has small coefficients. Combining Young's inequality, the relation \eqref{Real}, and some calculations, we have that \fi 
Here the crucial point is that the coefficients in the bound of $T_3$ only depends on the shear profile. By taking $\delta$ to be small compared to $C_{\mathrm{spec}}(u)C_1^2(u)$, we have that the $T_3$ term can be absorbed by the left hand side of \eqref{Case_a_T123}. Therefore, by combining the estimates \eqref{Case_a_T123}, \eqref{Case_a_T_1}, \eqref{Case_a_T_2}, and \eqref{Case_a_T_3},   we obtain
\begin{align}
 \|w \psi_i^0\xi_i\|_2^2
\leq & C(B,u,\delta^{-1},j)  \nu^{-2\frac{j+1}{j+1+2\gamma}} |\log( \nu )| ^{16\gamma\al(\gamma)}\|F\|_2^2\\
&+\left(\frac{1}{B}  +C(u,\delta^{-1},j)\nu^{\frac{1}{j+1+2\gamma}}|\log \nu|^{4\al(\gamma)}\right) \|w \|_2^2.\label{psi_0_term}
\end{align}\ifx
As a result, we have that
\begin{align}
\|w \psi_i^0\xi_i\|_2^2\leq C(\delta^{-1}) \nu^{-2\frac{j+1}{j+1+2\gamma}} |\log( \nu )| ^{16\gamma\al(\gamma)}\|F\|_2^2+C\delta^{\frac{2j}{j+1}}\|w \|_2^2.
\end{align} 
\fi This concludes the step \#2.

\noindent 
\textbf{Step \#3:} Estimation of the $\|w \psi_i^\pm\|_2^2$ terms in \eqref{case_a_decomposition}. Since $|u(y)-\lambda|$ has quantitative positive lower bound \eqref{psi_pm_property} on the domain of integration, we apply the relation \eqref{Imaginary_with_f}, H\"older inequality  and the product rule \eqref{product_rule} to obtain  the following estimate, 
\begin{align}
\|w \psi_i^\pm\xi_i\|_2^2=&\bigg|\int_{\Torus}|w |^2\frac{(u-\lambda)}{(u-\lambda)}|\psi_i^\pm|^2\xi_i^2dy\bigg|\leq \delta^{-1}\nu^{-\frac{j+1}{j+1+2\gamma}} \bigg|\int_\Torus |w |^2(u-\lambda)|\psi_i^\pm|^2\xi_i^2 dy\bigg|\\
\leq& C\delta^{-1}\nu^{-\frac{j+1}{j+1+2\gamma}}  \bigg(\|F\|_2\|w \|_2\|\psi_i^\pm\xi_i\|_\infty^2+\nu \||\pa_y|^\gamma w \|_2(\||\pa_y|^\gamma w \|_2 +\|w\|_2 )\|\psi_i^\pm\xi_i\|_\infty\\
&\quad\quad \quad\quad\quad \quad\quad+\nu \|  |\pa_y|^\gamma w\|_2 \|w\|_\infty(\||\pa_y|^\gamma (\psi_i^\pm\xi_i)^2\|_2+\|(\psi_i^\pm\xi_i)^2\|_2)\bigg)\\
=&:T_4+T_5+T_6.\label{Case_a_T456}
\end{align}
The $T_4$ can be estimated using the relation \eqref{Real}, the fact that $\|\xi_i\|_\infty,\,\|\psi_i^s\|_\infty\leq 1$, and the Young inequality as follows
\begin{align}
T_4\leq C(B,\delta^{-1})\nu^{-2\frac{j+1}{j+1+2\gamma}}\|F\|_2^2+\frac{1}{B}\|w\|_2^2.\label{Case_a_T_4}
\end{align} 
Next we estimate $T_5$ term in \eqref{Case_a_T456}. Application of the relation \eqref{Real}, H\"older inequality and Young inequality yields that 
\begin{align}
T_5\leq C(B,\delta^{-1}, j)\nu^{-2\frac{j+1}{j+1+2\gamma}}\|F\|_2^2+\left(\frac{1}{B}+C(\delta^{-1},j)\nu^{\frac{2\gamma}{j+1+2\gamma}}\right)\|w\|_2^2.\label{Case_a_T_5}
\end{align}
To estimate term $T_6$ in \eqref{Case_a_T456}, 
we recall the quantitative estimates of $\psi_i$  \eqref{psi_i_bound} and $\xi_i-$estimate \eqref{Est_xi_i}, and apply a similar argument to \eqref{pay_gamma_psi_xi_L2} to obtain  that 
\begin{align}
\||\pa_y|^\gamma(\psi_i^\pm\xi_i)^2\|_2
\leq &C(u,\delta^{- 1}, j)\nu^{\frac{1-2\gamma}{2(j+1+2\gamma)}} |\log (\nu  )|^{2\al(\gamma)}.
\end{align}
Now we apply the relation \eqref{Real}, Young inequality, Gagliardo-Nirenberg interpolation inequality to  obtain
\begin{align}
T_6\leq &C\delta^{-1}\nu^{-\frac{j+1}{j+1+2\gamma}}\nu \left(\||\pa_y|^\gamma w \|_2^{1+\frac{1}{2\gamma}}\|w \|_2^{1-\frac{1}{2\gamma}}+\||\pa_y|^\gamma w \|_2\|w \|_2\right)(\||\pa_y|^\gamma(\psi_i^+\xi_i)^2\|_2+1)\\
\leq&  C(B,\delta^{-1},j)\left(\nu^{-\frac{j+1}{j+1+2\gamma}} |\log (\nu )|
^{\frac{8\gamma\al(\gamma)}{2\gamma+1}} +\nu^{-\frac{j}{j+1+2\gamma}}|\log \nu|^{4\al(\gamma)} \right) \nu\||\pa_y|^\gamma w \|_2^2+\frac{1}{B}\|w \|_2^2\\
\leq& C(B,\delta^{-1},j)\nu^{-2\frac{j+1}{j+1+2\gamma}} |\log (\nu )|
^{16\gamma\al(\gamma)}\|F\|_2^2+\frac{1}{B}\|w\|_2^2.\label{Case_a_T_6}
\end{align}
 Combining the estimates \eqref{Case_a_T456}, \eqref{Case_a_T_4}, \eqref{Case_a_T_5} and \eqref{Case_a_T_6} , we obtain that 
\begin{align}
\|w& \psi_i^\pm\xi_i\|_2^2\\
\leq&C(B,u,\delta^{-1},j) \nu^{-2\frac{j+1}{j+1+2\gamma}} |\log \nu|
^{{16\gamma \al(\gamma)}}  \|F\|_2^2+\left(\frac{1}{B}+C(u,\delta^{-1},j_i) \nu^{\frac{1}{j+1+2\gamma}	}|\log \nu|^{4\al(\gamma)}\right)\|w \|_2^2,\label{psi_pm_term}
\end{align} 
for any $B>1$. Now combining the decomposition  \eqref{case_a_decomposition}, the estimates \eqref{psi_0_term} and \eqref{psi_pm_term}, we obtain the estimate \eqref{Goal_i_by_i} for $i\in \mathcal{I}_{\mathrm{near}}$ \eqref{Case a}.\ifx
\begin{align}
\|w \xi_i\|_2^2\leq& 4(\|w \psi_i^0\xi_i\|_2^2+\|w \psi_i^+\xi_i\|_2^2+\|w \psi_i^-\xi_i\|_2^2)\\
\leq &C(B,\delta^{-1}) \nu^{-2\frac{j+1}{j+1+2\gamma}} |\log(\nu  )|^{16\gamma\al(\gamma)}\|F\|_2^2+(\frac{1}{B}+C\delta^{\frac{2j}{j+1}})\|w \|_2^2.
\end{align} 
This is consistent with  and hence the treatment of case a) is completed.\fi \end{proof}
Next we treat the case where $i\in \mathcal{I}_{\mathrm{interm}}$  \eqref{Case b}. 

\begin{lem} \label{lem:case b}
Assume the condition in Theorem \ref{thm:semigroup_est_PS}. For $i\in\mathcal{I}_{\mathrm{interm}}$ \eqref{Case b} and for $\nu_0(u)$ small enough, the estimate  \eqref{Goal_i_by_i} holds. 
\end{lem}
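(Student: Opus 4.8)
The plan is to reduce case $b)$ to a nondegenerate critical‑layer estimate, parallel to Lemma \ref{lem:case a} but centered at the level set $\{u=\lambda\}$ rather than at $y_i^\star$. First I would locate the critical layer: since $i\in\mathcal I_{\mathrm{interm}}$ there is a point $y_\lambda$ with $u(y_\lambda)=\lambda$ and $|y_\lambda-y_i^\star|\le 3r_i$ (at most two such points, one on each side of $y_i^\star$, treated identically; take one). Combining $|\lambda-u_i^\star|\ge\delta\nu^{\frac{j_i+1}{j_i+1+2\gamma}}$ with \eqref{non-degenerate_0} forces $|y_\lambda-y_i^\star|\ge c(u)\delta^{\frac1{j_i+1}}\nu^{\frac1{j_i+1+2\gamma}}$, and then \eqref{non-degenerate} gives, on a neighborhood of $y_\lambda$ of size $\ll|y_\lambda-y_i^\star|$,
\[
\tfrac1{C(u)}\,m_\lambda|y-y_\lambda|\le|u(y)-\lambda|\le C(u)\,m_\lambda|y-y_\lambda|,\qquad m_\lambda:=|y_\lambda-y_i^\star|^{j_i},
\]
with $m_\lambda\in\big[\,c(u,\delta^{-1})\nu^{\frac{j_i}{j_i+1+2\gamma}},\ C(u)\,\big]$; so locally the shear looks like a nondegenerate layer of slope $m_\lambda$.

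Next I would set up a secondary smooth partition $1=\rho_i+\rho_i^c$ on $\mathrm{supp}(\xi_i)$: $\rho_i$ is a sum of bumps, one around each $y_\lambda$, with $\rho_i\equiv1$ on $B(y_\lambda;\ell_\lambda)$ and $\mathrm{supp}(\rho_i)\subset B(y_\lambda;2\ell_\lambda)$, where $\ell_\lambda:=\epsilon_0(u)(\nu/m_\lambda)^{\frac1{2\gamma+1}}$ for a fixed small $\epsilon_0(u)>0$ (so that $\ell_\lambda\ll|y_\lambda-y_i^\star|$, hence $\mathrm{supp}(\rho_i)$ misses $y_i^\star$). Since $\ell_\lambda$ carries the same $\nu$‑power as the transition scale used in Step \#1 of Lemma \ref{lem:case a}, up to factors of $\delta$ and of $m_\lambda$, the Sobolev norms of $\rho_i$, $\rho_i^c\xi_i$ and $(\rho_i^c\xi_i)^2$ satisfy the analogues of \eqref{psi_i_bound}--\eqref{H_gamma_psi}, including the $\dot H^{1/2}$‑logarithm for $\gamma\in(1/2,1)$ (one uses $|\log(\nu/m_\lambda)|\sim|\log\nu|$). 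Moreover, by monotonicity of $u$ between $y_i^\star$ and $y_\lambda$, on $\mathrm{supp}(\rho_i^c\xi_i)$ one has $|u-\lambda|\ge\tfrac{c(u)}{C_1(u)}m_\lambda\ell_\lambda\ge c(u,\delta^{-1})\nu^{\frac{j_i+1}{j_i+1+2\gamma}}$; and if $\mathrm{dist}(\mathrm{supp}(\xi_i),\{u=\lambda\})\ge2\ell_\lambda$ then $\rho_i^c\xi_i=\xi_i$ and only the first piece below occurs.

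Then I would estimate $\|w\rho_i^c\xi_i\|_2^2$ and $\|w\rho_i\xi_i\|_2^2$ separately. For the former, since $|u-\lambda|\ge c(u,\delta^{-1})\nu^{\frac{j_i+1}{j_i+1+2\gamma}}$ on the support, write $|w\rho_i^c\xi_i|^2=|w|^2\frac{u-\lambda}{u-\lambda}|\rho_i^c\xi_i|^2$ and run the argument of Step \#3 of Lemma \ref{lem:case a} verbatim (identity \eqref{Imaginary_with_f} with $g=|\rho_i^c\xi_i|^2$, product rule \eqref{product_rule}, \eqref{Real}, Young), noting that the trade‑off between the $m_\lambda$‑dependent lower bound on $|u-\lambda|$ and the $m_\lambda$‑dependent Sobolev norms of $\rho_i^c\xi_i$ always lands on the right‑hand side of \eqref{Goal_i_by_i}. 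For the latter, I would use the nondegenerate spectral gap — the $j=1$ analogue of Lemma \ref{lem:Spectral_gap} rescaled by $m_\lambda$, obtained from \eqref{Spectral_gap_sigma} with $j=1$, the bound $|y-y_\lambda|\le Cm_\lambda^{-1}|u-\lambda|$, and an optimal choice of $\sigma$:
\[
(\nu m_\lambda^{2\gamma})^{\frac1{2\gamma+1}}\|w\rho_i\xi_i\|_2^2\le C\nu\||\partial_y|^\gamma(w\rho_i\xi_i)\|_2^2+C(\nu m_\lambda^{2\gamma})^{-\frac1{2\gamma+1}}\|(u-\lambda)w\rho_i\xi_i\|_2^2.
\]
On $\mathrm{supp}(\rho_i)$ one has $|u-\lambda|\le C(u)m_\lambda\ell_\lambda$, so the last term is $\le C(u)\epsilon_0^2(\nu m_\lambda^{2\gamma})^{\frac1{2\gamma+1}}\|w\rho_i\xi_i\|_2^2$ and is absorbed once $\epsilon_0(u)$ is small; since $(\nu m_\lambda^{2\gamma})^{\frac1{2\gamma+1}}\ge c(u,\delta^{-1})\nu^{\frac{j_i+1}{j_i+1+2\gamma}}$, the surviving $\nu\||\partial_y|^\gamma(w\rho_i\xi_i)\|_2^2$ is handled by the product rule, the $\dot H^\gamma$‑bound for $\rho_i$, \eqref{Real} and Young exactly as in Step \#2 of Lemma \ref{lem:case a}. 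Adding the two contributions gives \eqref{Goal_i_by_i} for $i\in\mathcal I_{\mathrm{interm}}$, with $\nu_0(u)$ chosen small enough for all of the above.

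The step I expect to be the main obstacle is the bookkeeping of $\nu$‑powers and $|\log\nu|$‑factors uniformly over the whole range $m_\lambda\in[c(u,\delta^{-1})\nu^{\frac{j_i}{j_i+1+2\gamma}},C(u)]$: the target \eqref{Goal_i_by_i} is met with room to spare when $m_\lambda$ is large (there the effective problem has the better scaling $\nu^{\frac1{2\gamma+1}}$), but is \emph{saturated} exactly at $m_\lambda\sim\nu^{\frac{j_i}{j_i+1+2\gamma}}$, where $\ell_\lambda\sim\nu^{\frac1{j_i+1+2\gamma}}$ and the nondegenerate gap degrades precisely to $\nu^{\frac{j_i+1}{j_i+1+2\gamma}}$, so one must check that no intermediate estimate overshoots. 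A secondary, routine point is to record the nondegenerate spectral‑gap inequality displayed above, which is a direct modification of the proof of Lemma \ref{lem:Spectral_gap}.
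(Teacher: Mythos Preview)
Your strategy is genuinely different from the paper's and, modulo one issue below, correct. The paper does \emph{not} use a spectral gap in case $b)$ at all. Instead it decomposes $\mathrm{supp}(\xi_i)$ into $E_i=\{|u-\lambda|\le\tfrac19\delta\nu^{\frac{j+1}{j+1+2\gamma}}\}$ and $E_i^c$; on $E_i$ it uses the crude bound $\|w\xi_i\|_{L^2(E_i)}^2\le\mathrm{Area}(E_i)\|w\|_\infty^2$ together with the area estimate $\mathrm{Area}(E_i)\le C\delta^{1/(j+1)}\nu^{1/(j+1+2\gamma)}$, while on $E_i^c$ it introduces a smooth sign function $\phi_i\approx\mathrm{sign}(u-\lambda)$ with $\phi_i(y_{i;\lambda})=0$, tests \eqref{Resolvent} against $\phi_i\xi_i^2\overline w$, and exploits the pointwise bound $\xi_i^2|\phi_i|/|u-\lambda|\le C\delta^{-1}\nu^{-\frac{j+1}{j+1+2\gamma}}$. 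Your approach instead introduces the slope $m_\lambda$ and a nondegenerate spectral gap rescaled by $m_\lambda$; this is more modular (it literally reduces case $b)$ to a $j=1$ problem at the layer) but carries the price of tracking all estimates uniformly over $m_\lambda\in[c\nu^{\frac{j_i}{j_i+1+2\gamma}},C]$, which the sign-function trick avoids.

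The one real gap: you cannot run Step \#3 of Lemma~\ref{lem:case a} ``verbatim'' with the single test function $g=|\rho_i^c\xi_i|^2$. That step relies on $u-\lambda$ having a \emph{fixed sign} on $\mathrm{supp}(g)$ (so that $\|wg^{1/2}\|_2^2\le (\inf|u-\lambda|)^{-1}\big|\int(u-\lambda)g|w|^2\big|$), and in case $b)$ the sign of $u-\lambda$ changes across $\mathrm{supp}(\rho_i^c\xi_i)$: if $j_i$ is even there are two components of opposite sign separated by $y_\lambda$, and if $j_i$ is odd there are three components separated by $y_\lambda^\pm$. You must either (i) further split $\rho_i^c$ smoothly into these components (placing the transitions inside $\mathrm{supp}(\rho_i)$ where $\rho_i^c=0$, so the products inherit the same Sobolev bounds), or (ii) adopt the paper's device and multiply by a smoothed $\mathrm{sign}(u-\lambda)$. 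Either fix is routine, but without it the inequality you invoke from \eqref{Imaginary_with_f} does not give what you need.
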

\begin{proof}
Here we drop the subscript $i$ in the vanishing order $j_i$.  We organize the proof into three steps. 

\noindent 
\textbf{Step \#1:} Before estimating the $L^2$-norm $\|w \xi_i\|_{L^2(\Torus)}$,  several definitions are introduced. First, we consider the set $E_i$ and its compliment $E_i^c$ associated with each critical point:
\begin{align}
E_i:=\left\{y\bigg\|u(y)-\lambda|\leq \frac{1}{9}\delta  \nu^{\frac{j+1}{j+1+2\gamma}},\,y\in \mathrm{support}(\xi_i^2)\right\}, \quad E_i^c=\Torus\backslash E_i.\label{defn:E} 
\end{align} 
Next, we define functions $\phi_i$ which change sign near the spectral parameter $\lambda$, and have transition layer adapted to the set $E_i$. 
Given the shear profile $u(y),$ we first define the $y_{i;\lambda}$ such that $u(y_{i;\lambda})=\lambda,\, y_{i;\lambda}\in B(y_i^\star,4r_i), \,  \mathrm{support} (\xi_i^2)=B(y_i^\star,2r_i)$. If the vanishing  order $j$  of the critical point is odd, then there are two points $y_{i;\lambda }$ such that $u(y_{i;\lambda})=\lambda$. In this case, we use $y_{i;\lambda}^-$ and $y_{i;\lambda}^+$ to represent them and  use $y_{i;\lambda}$ to denote the set $\{y_{i;\lambda}^+,y_{i;\lambda}^-\}$. 
Since the balls $\{B(y_i^\star, 4r_i)\}_{i=1}^N$ are mutually disjoint, the $y_{i;\lambda}$ associated with different critical points are distinct.  We define the function $\{\phi_i\}_{i=1}^N\in (C^\infty(\mathbb{T}))^N$: 
\begin{align}
a) &\quad\phi_i =\left\{\begin{array}{ccc}\mathrm{sign}(u(y)-\lambda),\quad& |u(y)-\lambda|\geq \frac{1}{9}\delta   \nu^{\frac{j+1}{j+1+2\gamma}},\quad \text{dist}(y,y_i^\star)< 4r_i;\\
\text{monotone},&\quad|u(y)-\lambda|\leq \frac{1}{9} \delta \nu^{\frac{j+1}{j+1+2\gamma}},\quad \text{dist}(y,y_i^\star)< 4r_i;\\
\text{smooth},&\quad \mathrm{dist}\left( y,y_i^\star\right)\geq 4r_i=\mathrm{diameter}(\text{support}(\xi_i));
\end{array}\right.\\
b) &\quad \|\pa_y \phi_i\|_{L^2}\leq C \delta^{-\frac{1}{2(j+1)}} \nu^{-\frac{1}{2(j+1+2\gamma)}},\quad \|\pa_y^2 \phi_i\|_{L^2}\leq C \delta^{-\frac{3}{2(j+1)}} \nu^{-\frac{3}{2(j+1+2\gamma)}};\\
&\quad \||\pa_y|^{{1/2 }} \phi_i\|_{L^2}\leq C(\delta^{-1},j)|\log(\nu )|,\quad
\||\pa_y|^\gamma\phi_i\|_{L^2} \leq C(\delta^{-1}, j)|\log (\nu)|^{2\al(\gamma)}\nu^{\frac{1-2\gamma}{2(j+1+2\gamma)}};\\
c) &\quad\phi_i(y_{i;\lambda})=\phi_i(y_{i;\lambda}^\pm)=0,\quad\|\phi_i\|_{L^\infty}\leq 1;\\
d)&\quad \phi_i(y)=\phi_i(y_{i;\lambda})+\phi_i'(y_{i;\lambda})(y-y_{i;\lambda}), \quad |y-y_{i;\lambda}|\leq\frac{1}{L(u )}\delta^{\frac{1}{j+1}} \nu^{\frac{1}{j+1+2\gamma}},\quad \text{if } j \text{ is even};\\
&\quad \phi_i(y)=\phi_i(y_{i;\lambda}^\pm)+\phi_i'(y_{i;\lambda}^\pm)(y-y_{i;\lambda}^\pm), \quad |y-y_{i;\lambda}^\pm|\leq\frac{1}{L(u )}\delta^{\frac{1}{j+1}} \nu^{\frac{1}{j+1+2\gamma}},\quad\text{if } j \text{ is odd}.
\end{align} 
Since the function $\phi_i$ is very similar to the function $\psi_i^s$ in the proof of Lemma \ref{lem:case a}, the regularity estimates are similar to those of $\psi_i^s$. 
Here the universal constant $L(u )$ is chosen such that if $|y-y_{i;\lambda}^\pm|\leq\frac{1}{L(u )}\delta^{\frac{1}{j+1}} \nu^{\frac{1}{j+1+2\gamma}}$, then $|u(y)-\lambda|\leq \frac{1}{36}\delta\nu^{\frac{j+1}{j+1+2\gamma}} $. By the estimate of $u$ on the support of $\xi_i$ \eqref{non-degenerate_0},  the existence of this universal constant $L(u )$, which is independent of $\nu $, is guaranteed.   

\begin{figure}
\includegraphics[scale=0.451]{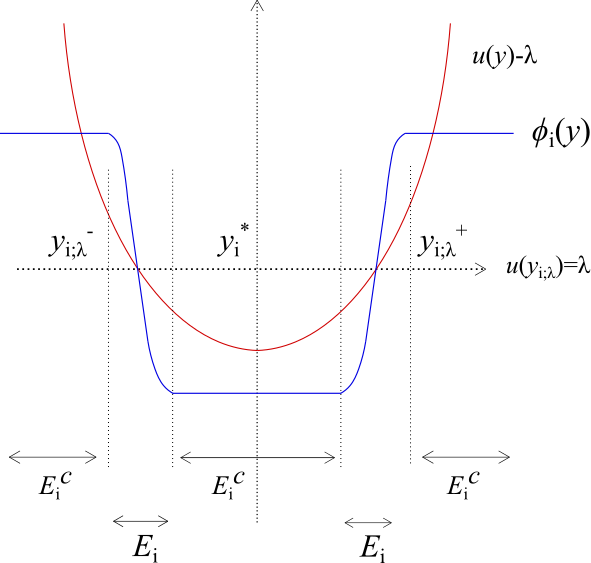}\quad
\includegraphics[scale=0.451]{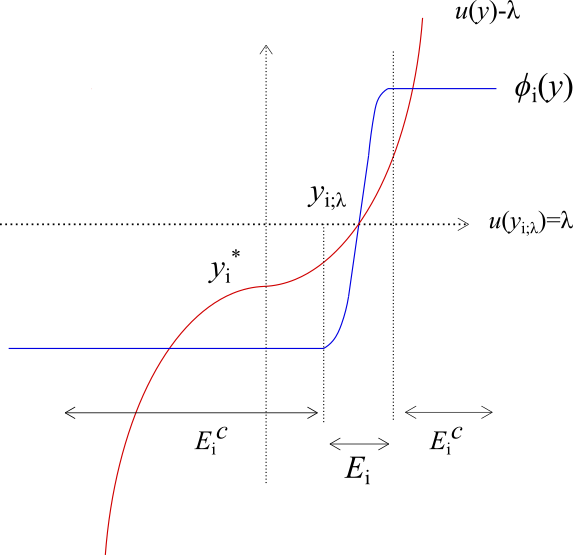}
\centering
\caption{Function $\phi_i(y)$ for $j$ odd(left).  Function $\phi_i(y)$ for $j$ even (right).} 
\end{figure}

Now we estimate the area of the set $ E_i$. Based on the relative position between $\lambda$ and $u_i^\star$, we distinguish between two cases: 
\begin{align}(b_1) \quad |y_{i;\lambda}-y_i^\star|\leq r_i;\quad  (b_2)\quad r_i<|y_{i;\lambda}-y_i^\star|\leq 3r_i.\label{b12}
\end{align}
In case $(b_1)$ \eqref{b12}, we first consider the case where the vanishing order $j$ is even. As a result, the shear profile is strictly monotone near the critical point and there is a unique $y_{i;\lambda}$ such that $u(y_{i;\lambda})=\lambda$.  We consider arbitrary $y\in  E_i$ \eqref{defn:E}.  Note that in this case, the product $(y-y_i^\star)(y_{i;\lambda}-y_i^\star)>0$. Application of the mean value theorem, the $u$-estimates \eqref{non-degenerate_0}, \eqref{non-degenerate} on the support of $\xi_i$  and definition  of $E_i$ \eqref{defn:E} yields that 
\begin{align}
|y-y_{i;\lambda}|\leq &\frac{|u(y)-u(y_{i;\lambda})|}{\min_{z\in [y,y_{i;\lambda}]\cup[y_{i;\lambda},y]}|u'(z)|}
\leq C_1(u)\frac{|u(y)-\lambda|}{\min\{|y-y_i^\star|^j,|y_{i;\lambda}-y_i^\star|^j\}}\\\leq&\frac{C(C_1(u),C_2(u))\delta\nu^{\frac{j+1}{j+1+2\gamma}} }{\min\{|u(y)-u_i^\star|^{\frac{j}{j+1}},|\lambda-u_i^\star|^{\frac{j}{j+1}}\}}\leq C(u)\delta^{\frac{1}{j+1}} \nu^{\frac{1}{j+1+2\gamma}}.\label{Choice_of_K}
\end{align}
Since the estimate holds for all $y\in E_i$, we have that \begin{align}\label{Area_E_i_cap_xi_i}{|E_i|}\leq C(u)\delta^{\frac{1}{j+1}} \nu^{\frac{1}{j+1+2\gamma}}.
\end{align} If the vanishing order $j$ is odd, then $y_{i;\lambda}=\{y_{i;\lambda}^-,y_{i,\lambda}^+\}$. Now we define  $E_i^\pm=E\cap \{y|(y-y_i^\star)(y_{i;\lambda}^\pm-y_i^\star)>0\}$. 
Now carrying out similar argument yields \eqref{Area_E_i_cap_xi_i}.  \myb{The remaining argument is similar. Check?!}  
In the case $(b_2)$ \eqref{b12}, by picking $\nu_0$ small enough in the definition  of the set $E_i$ \eqref{defn:E}, we have that for $y\in E_i$, the following the derivative lower bound of $u$ holds,
\begin{align}
\min_{z\in [y,y_{i;\lambda}]\cup[y_{i;\lambda},y]}|u'(z)|\geq \frac{1}{C(u)}>0.
\end{align} 
Combining this derivative $|u'|$ lower bound and a similar argument to \eqref{Choice_of_K} yields that ${|E_i|}\leq C(u)\delta^{\frac{1} {j+1}}\nu^{\frac{1}{j+1+2\gamma}} $.
 
To conclude step \#1, we estimate a specific function which will be applied later. We first consider the case where $j$ is even. We apply the properties of the functions $\phi_i,\, \xi_i$ and the mean value theorem to estimate the following function for $\forall y\in \Torus$,
\begin{align}
\xi_i^2(y) \frac{ |\phi_i (y)|}{ |u(y)-\lambda| } \leq& \xi_i^2(y)\mathbf{1}_{E_i}(y) \frac{\|\pa_y\phi_i\|_\infty |y-y_{i;\lambda}|}{|u(y)-u(y_{i;\lambda})|} +\xi_i^2(y)\mathbf{1}_{ E_i^c}(y) 9\delta^{-1}\nu^{-\frac{j+1}{j+1+2\gamma}} \\
\leq & \xi_i^2(y)\mathbf{1}_{E_i}(y)\|\pa_y\phi_i\|_\infty   \frac{1}{\min_{z\in E_i}|u'(z)|} +\xi_i^2(y)\mathbf{1}_{ E_i^c}(y) 9\delta^{-1}\nu^{-\frac{j+1}{j+1+2\gamma}}. \label{factor_T1_0}
\end{align}
Now by the behavior of the shear profile $u$ on the support of $\xi_i$ \eqref{non-degenerate_0}, \eqref{non-degenerate}, the $\lambda$ constraint \eqref{Case b}, the definition of $E_i$ \eqref{defn:E}, we observe that  
\begin{align}
\min_{y\in E_i}|u'(y)|\geq& C_1^{-1}(u)\min_{y\in E_i}|y-y_i^\star|^{j}\geq C_1^{-1}C_0^{-1}\min_{y\in E_i}|u(y)-u(y_i^\star)|^{\frac{j}{j+1}} \\
\geq& C_1^{-1}C_0^{-1}\min_{y\in E_i}\|u_i^\star-\lambda|-|u(y)-\lambda\|^{\frac{j}{j+1}} \geq C_1^{-1}C_0^{-1}\bigg|\delta \nu^{\frac{j+1}{j+1+2\gamma}}-\frac{1}{9}\delta \nu^{\frac{j+1}{j+1+2\gamma}}\bigg|^{\frac{j}{j+1}}\\
\geq &C(u)^{-1}\delta^{\frac{j}{j+1}}\nu^{\frac{j}{j+1+2\gamma}}.
\end{align}
Combining the lower bound, the estimate \eqref{factor_T1_0} and the property b) of the function $\phi_i$, we have obtained the following 
\begin{align}\xi_i^2(y) \frac{| \phi_i (y)|}{ |u(y)-\lambda| } \leq& C\delta^{-1}\nu^{-\frac{j+1}{j+1+2\gamma}},\quad \forall y\in \Torus.\label{factor_T1} 
\end{align}
Here if $j$ is odd, then we can replace the $y_{i;\lambda}$ above by either $y_{i;\lambda}^+$ or $y_{i;\lambda}^-$. Moreover, we will decompose the domain $\Torus$ into the three subdomains $E_i^c$, $E_i^\pm$, where $E_i^\pm$ denotes the connect component of $E_i$ which contains $y_{i;\lambda}^\pm$. Application of similar estimate above yields the estimate \eqref{factor_T1}.  This concludes step \#1. \myb{Check!}

\noindent
\textbf{Step \#2: }Estimation of $\|w \xi_i\|_{L^2(E_i^c)}$.  Direct application of the constraint \eqref{defn:E} and property a) of the function $\phi_i$ yields that
\begin{align}\label{wk_xi_L2_E_i_c}
\|w \xi_i\|_{L^2(E_i^c)}^2=&\int_{E_i^c}\xi_i^2\frac{|u-\lambda \|\phi_i|}{|u-\lambda\|\phi_i|}|w |^2dy\leq C \delta^{-1}\nu^{-\frac{j+1}{j+1+2\gamma}}\int_{\Torus} |u-\lambda|\xi_i^2 |\phi_i\|w |^2dy.
\end{align}
\ifx
\textcolor{blue}{For checking purpose: for fixed $y$, pick the point $y_{i;\lambda}^\pm$ in the set $y_{i;\lambda}$ (Recall that if $j$ is odd, $y_{i;\lambda}$ is a set containing two points, and if $j$ is even, $y_{i;\lambda}$ is a point.) which is closest to the point $y$ so that $(y-y_i^\star)$ and $(y_{i;\lambda}-y_i^\star)$. It is enough to estimate $|u(y)-u(y_{i;\lambda})|=|u-\lambda|$, and applying the Taylor expansion (with $K$ \eqref{defn:E} and $r_i$ chosen small enough)
\begin{align}
|u(y)-\lambda|=&|(u(y)-u_i^\star)-(u(y_{i;\lambda})-u_i^\star)|\\
=&\bigg|\frac{u^{(j+1)}(y_i^\star)}{(j+1)!}((y-y_i^\star)^{j+1}-(y_{i;\lambda}-y_i^\star)^{j+1})+O(|y-y_i^\star|^{j+2},|y_{i;\lambda}-y_i^\star|^{j+2})\bigg|\\
\geq &|\frac{1}{C}(y-y_{i;\lambda})\sum_{\ell=0}^j(y-y_i^\star)^{j-\ell}(y_{i;\lambda}-y_i^\star)^\ell|-O(|y-y_i^\star|^{j+2},|y_{i;\lambda}-y_i^\star|^{j+2})\\
\geq &\frac{1}{C}|y-y_{i;\lambda}|\min\{|y-y_i^\star|^j,|y_{i;\lambda}-y_i^\star|^j\}-O(|y-y_i^\star|^{j+2},|y_{i;\lambda}-y_i^\star|^{j+2})\\
\geq&\frac{1}C|y-y_{i;\lambda}\|y_{i;\lambda}-y_i^\star|^j-\frac{C}{K^j}|y-y_{i;\lambda}|^{j+1}-
\end{align}}
\fi
Now we test the equation \eqref{Resolvent} by $\overline{w }\phi_i\xi_i^2$, 
\begin{align}
\int &|u-\lambda\|\phi_i|\xi_i^2|w |^2dy=\int(u-\lambda)\phi_i\xi_i^2|w |^2dy
=\mathrm{Im}\int_\Torus F\overline{w}\phi_i\xi_i^2  dy-\nu\mathrm{Im} \int_\Torus |\pa_y|^\gamma w |\pa_y|^\gamma (\phi_i\xi_i^2\overline{w})dy.\label{Case_b_proof_1}
\end{align}
Now we apply the bound \eqref{factor_T1}, property a) and c) of the functions $\phi_i$ and the product estimate \eqref{product_rule}, H\"older inequality, Young's inequality and Gagliardo-Nirenberg interpolation inequality to  get the following 
\begin{align}
\int |u-\lambda|&|\phi_i|\xi_i^2|w |^2dy\\
\leq& {C}{ }\bigg(\norm{\frac{F\sqrt{|\phi_i|}\xi_i}{\sqrt{|u-\lambda|}}}_{2}\|\sqrt{|u-\lambda|}\sqrt{|\phi_i|}\xi_i w \|_2+\nu \||\pa_y|^\gamma w \|_2(\||\pa_y|^\gamma w \|_2 +\|w\|_2)\|\phi_i\xi_i^2\|_\infty\\
&\quad\quad+\nu\||\pa_y |^\gamma w \|_2\|w \|_\infty(\|\phi_i\xi_i^2\|_{\dot H^\gamma}+\|\phi_i\xi_i^2\|_{L^2}\bigg)\\
\leq& {C(\|\xi_i\|_{ W^{1,\infty}})}{ }\bigg(\norm{\frac{F\sqrt{|\phi_i|}\xi_i}{\sqrt{|u-\lambda|}}}_{ 2}\|\sqrt{|u-\lambda|}\sqrt{|\phi_i|} \xi_iw \|_2+\nu \||\pa_y |^\gamma w \|_2(\||\partial_y|^\gamma w\|_2+\|w\|_2) \\
&\quad\quad\quad\quad\quad\quad+\nu(\||\pa_y|^\gamma w \|_2^{1+\frac{1}{2\gamma}}\|w \|_2^{1-\frac{1}{2\gamma}}+\||\pa_y|
^\gamma w \|_2\|w \|_2)(\||\pa_y|^\gamma\phi_i\|_{2}+1)\bigg)\\ 
\leq & {C(u)}{ }\norm{\frac{F\sqrt{|\phi_i|}\xi_i}{\sqrt{|u-\lambda|}}}_{ 2}^2+\frac{1}{2}\|\sqrt{|u-\lambda|}\sqrt{|\phi_i|} \xi_iw \|_2^2+ {C(u)}\nu \||\pa_y|^\gamma w \|_2(\||\pa_y|^\gamma w \|_2+\|w\|_2) \\
& +C(u)\nu(\||\pa_y|^\gamma w \|_2^{1+\frac{1}{2\gamma}}\|w \|_2^{1-\frac{1}{2\gamma}}+\||\pa_y|^\gamma w \|_2\|w \|_2)( \|\phi_i\|_{\dot H^\gamma} +1).
\end{align}
Note that the second term on the right hand side gets absorbed by the left hand side, which implies the following estimate
\begin{align}
\int &|u-\lambda\|\phi_i|\xi^2_i|w |^2dy\leq {C}(u) \norm{\frac{F\sqrt{|\phi_i|}\xi_i}{\sqrt{|u-\lambda|}}}_{ 2}^2+ {C}(u) \nu\left( \||\pa_y|^\gamma w \|_2^2+ \||\pa_y|^\gamma w \|_2\|w\|_2\right)\\
&+C(u)\nu\left(\||\pa_y|^\gamma w \|_2^{1+\frac{1}{2\gamma}}\|w \|_2^{1-\frac{1}{2\gamma}}+\||\pa_y|^\gamma w \|_2\|w \|_2\right)( \|\phi_i\|_{\dot H^\gamma} +1).
\end{align}
Combining this relation with \eqref{wk_xi_L2_E_i_c} and the properties of the function $\phi_i$ yields the following 
\begin{align}
\|w \xi_i\|_{L^2(E_i^c)}^2\leq&C(u,\delta^{-1}) \nu^{-\frac{j+1}{j+1+2\gamma}}\bigg( {}{ }\norm{\frac{F\sqrt{|\phi_i|}\xi_i}{\sqrt{|u-\lambda|}}}_{L^2}^2+ {\nu } (\||\pa_y|^\gamma w \|_2^2+\||\pa_y|^\gamma w\|_2\|w\|_2)\\
&+  {\nu} (\||\pa_y|^\gamma w \|_2^{1+\frac{1}{2\gamma}}\|w \|_2^{1-\frac{1}{2\gamma}}+\||\pa_y|^\gamma w \|_2\|w \|_2)( \|\phi_i\|_{\dot H^\gamma} +1) \bigg)\\
=:&T_{1}+T_2+T_3 .\label{T_i}&
\end{align}
Now we estimate each term in \eqref{T_i}. \ifx We estimate $T_1$ in different scenarios.  When the vanishing order $j$ of the critical point $y_i^\star$ is even, the shear flow profile $u(y)$ is strictly monotone near the critical point. Hence we observe that if $\delta$ is small enough, $|u(y_i^\star)-\lambda|=|u(y_i^\star)-u(y_{i;\lambda})|\geq \delta \nu^{\frac{j+1}{j+1+2\gamma}} $ and $|u(y)-u(y_{i;\lambda})|\leq \frac{1}{100}\delta\nu^{\frac{j+1}{j+1+2\gamma}} $, then $y_{i;\lambda}-y_i^\star$ and $y-y_i^\star$ have the same sign. When treating the $T_1$-term , we apply the properties b), c), d) of the function $\phi_i$, the Taylor expansion of $u$ near the critical point $y_\star$ and the smallness of $\delta$ to obtain 
\begin{align}
\xi_i^2&\bigg|\frac{\sqrt{\phi_i}}{\sqrt{|u-\lambda|}}\bigg|^2=\xi_i^2 \frac{|\phi_i|}{{|u-u(y_{i;\lambda})|}} \\
\leq&C\frac{\|\pa_y\phi_i\|_\infty|y-y_{i;\lambda}|}{\textcolor{red}{|u^{(j+1)}(y_i^\star)\| (y-y_i^\star)^{j+1}-(y_{i;\lambda}-y_i^\star)^{j+1}|}}\mathbf{1}_{|u(y)-u(y_{i;\lambda})|\leq\frac{1}{100} \delta  \nu^{\frac{j+1}{j+1+2\gamma}}}\\
&+C\xi_i^2\delta^{-{1}}\nu^{-\frac{j+1}{j+1+2\gamma}} \mathbf{1}_{|u(y)-u(y_{i;\lambda})|\geq \frac{1}{100}\delta  \nu^{\frac{j+1}{j+1+2\gamma}}}\\
\leq&C\frac{\|\pa_y\phi_i\|_\infty}{|\sum_{\ell=0}^j (y-y_i^\star)^j(y_{i;\lambda}-y_i^\star)^{j-\ell}|}\mathbf{1}_{|u(y)-u(y_{i;\lambda})|\leq\frac{1}{100} \delta  \nu^{\frac{j+1}{j+1+2\gamma}}}\\
&+C\xi_i^2\delta^{-{1}}\nu^{-\frac{j+1}{j+1+2\gamma}} \mathbf{1}_{|u(y)-u(y_{i;\lambda})|\geq \frac{1}{100}\delta  \nu^{\frac{j+1}{j+1+2\gamma}}}\\
\leq& C\delta^{-1}\nu^{-\frac{j+1}{j+1+2\gamma}} .
\end{align}
\myb{In the case when the vanishing order $j$ is odd, we first note that $u(y)\approx u(y_i^\star)+(y-y_i^\star)^{j+1}$ near the critical point by Taylor's theorem. As a result, the equation $\lambda=u(y_{i;\lambda})$ now has two zeros, i.e., $y_{i;\lambda}^+$ and $y_{i;\lambda}^-$. Here  to estimate the expression in \eqref{factor_T1}, we  first fix the sign of $y-y_i^\star$, then choose the $y_{i;\lambda}^\pm$ accordingly, such that $y-y_i^\star$ and $y_{i;\lambda}^\pm-y_i^\star$  have the same sign. Then we use the property of $\phi_i$ at the corresponding $y_{i;\lambda}^\pm$ and similar argument to derive \eqref{factor_T1} to get the estimate.} 
\fi
Combining the estimate \eqref{factor_T1} and the relation \eqref{Real}, we obtain
\begin{align}
T_1\leq & C(u, \delta^{-1} )\nu^{-2\frac{j+1}{j+1+2\gamma}} \|F\|_2^2.\label{T_1}
\end{align} 
Now we estimate $T_2$ term with the relation \eqref{Real}  as follows
\begin{align}
T_2\leq&C(B,\delta^{-1})\nu^{-2\frac{j+1}{j+1+2\gamma}} \|F\|_2^2+\frac{1}{B}\|w \|_2^2.\label{T_2}
\end{align} 
Next we estimate $T_3$. The estimate of $T_3$ is similar to the estimate of $T_6$ \eqref{Case_a_T_6} in the proof of Lemma \ref{lem:case a}.    Combining the property b) of the function $\phi_i$,  the relation \eqref{Real}, Gagliardo-Nirenberg inequality,   and  Young  inequality, we obtain
\begin{align}
T_3
\leq& C(B,u,\delta^{-1},j)\nu^{-2\frac{j+1}{j+1+2\gamma}} |\log(\nu )|^{\frac{16\gamma\al(\gamma)}{2\gamma+1}} \|F\|_2^2+\frac{1}{B}\|w \|_2^2.\label{T_3}
\end{align} 
Combining the estimates \eqref{T_1}, \eqref{T_2}, \eqref{T_3} and the relation \eqref{T_i}, we obtain the estimate 
\begin{align}\label{Case_b_Step_2}
\|w \xi_i\|_{L^2(E_i^c)}^2\leq& C(B,\delta^{-1},u,j)\nu^{-2\frac{j+1}{j+1+2\gamma}} |\log(\nu )|^{\frac{16\gamma\al(\gamma)}{2\gamma+1}}\|F\|_2^2+\frac{1}{B}\|w \|_2^2,\quad \forall i\in \mathcal{I}_{\mathrm{interm}}.
\end{align}This concludes step \#2.

\noindent
\textbf{Step \#3: }Estimation of $\|\xi_i w\|_{L^2(E_i)}^2$ \eqref{defn:E}. 
Applying the area estimate \eqref{Area_E_i_cap_xi_i}, the relation \eqref{Real}, Gagliardo-Nirenberg interpolation inequality and Young inequality, we estimate the $L^2$ contribution  as follows: 
\begin{align}
\|w\xi_i \|_{L^2(E_i)}^2\leq &{|E_i|}\|w \|_{L^\infty(\Torus)}^2\\
\leq&C \delta^{\frac{1}{j+1}} \nu^{\frac{1}{j+1+2\gamma}}\||\pa_y |^\gamma w \|_{L^2(\Torus)}^{\frac{1}{\gamma}}\|w \|_{L^2(\Torus)}^{2 -\frac{1}{\gamma}}+C\delta^{\frac{1}{j+1}} \nu^{\frac{1}{j+1+2\gamma}}\|w \|_{L^2(\Torus)}^2\\
\leq &C(B) \nu^{-\frac{j+1}{j+1+2\gamma}}\nu\||\pa_y|^\gamma w \|_{L^2(\Torus)}^2+\left(\frac{1}{B}+C\delta^{\frac{1}{j+1}} \nu^{\frac{1}{j+1+2\gamma}}\right)\|w \|_{L^2(\Torus)}^2\\ 
\leq&C(B)\nu^{-\frac{j+1}{j+1+2\gamma}}  \|F\|_{L^2(\Torus)}\|w \|_{L^2(\Torus)}+\left(\frac{1}{B}+C\nu^{\frac{1}{j+1+2\gamma}}\right)\|w \|_{L^2(\Torus)}^2\\
\leq&C(B)\nu^{-2\frac{j+1}{j+1+2\gamma}} \|F\|_{L^2(\Torus)}^2+\left(\frac{1}{B}+C\nu^{\frac{1}{j+1+2\gamma}}\right)\|w \|_{L^2(\Torus)}^2.\label{wk_L2Ei}
\end{align}
 Combining \eqref{wk_L2Ei} and estimate \eqref{Case_b_Step_2} implies \eqref{Goal_i_by_i} for $i\in \mathcal{I}_{\mathrm{interm}}$. 
\end{proof}
Now we consider $i\in \mathcal{I}_{\mathrm{far}}$ \eqref{Case c}. 
\begin{lem} \label{lem:case c}
Assume the conditions in Theorem \ref{thm:semigroup_est_PS}. For $i\in \mathcal{I}_{\mathrm{far}}$ \eqref{Case c}, the estimate  \eqref{Goal_i_by_i}   holds for $\nu_0(u)>0$ small enough. 
\end{lem}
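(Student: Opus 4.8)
The plan is to exploit that, for $i\in\mathcal{I}_{\mathrm{far}}$ \eqref{Case c}, the spectral value $\lambda$ is quantitatively separated from the range of $u$ on $\mathrm{support}(\xi_i)$, which makes $u-\lambda$ non‑vanishing there; this renders Lemma~\ref{lem:case c} the softest of the three cases, with no cut‑offs or logarithmic losses needed. Write $j=j_i$. First I would dispose of the overlap with $\mathcal{I}_{\mathrm{near}}$: if $|\lambda-u_i^\star|\le\delta\nu^{(j+1)/(j+1+2\gamma)}$ then $i\in\mathcal{I}_{\mathrm{near}}$ \eqref{Case a} and \eqref{Goal_i_by_i} is already furnished by Lemma~\ref{lem:case a}; hence one may assume
\[
|\lambda-u_i^\star|>\delta\,\nu^{\frac{j+1}{j+1+2\gamma}}.
\]

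Then I would establish the uniform‑in‑$\lambda$ pointwise lower bound
\begin{equation}\label{plan:lb_far}
\min_{y\in\mathrm{support}(\xi_i)}|u(y)-\lambda|\ \ge\ \delta\,\nu^{\frac{j+1}{j+1+2\gamma}},\qquad \nu\le\nu_0(u),
\end{equation}
which is the heart of the matter. Since $\mathrm{dist}(y_i^\star,\{u=\lambda\})\ge 3r_i$, the continuous function $u-\lambda$ does not vanish on the connected interval $B(y_i^\star;3r_i)\supset\mathrm{support}(\xi_i)$, so it has a constant sign there; replacing $(u,\lambda)$ by $(-u,-\lambda)$ if necessary (as in the $k=\pm1$ reduction) I may take $u<\lambda$ on $B(y_i^\star;3r_i)$. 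The dilated balls $\{B(y_k^\star;4r_k)\}$ being pairwise disjoint, $y_i^\star$ is the only critical point of $u$ in $B(y_i^\star;4r_i)$, so $u$ is strictly monotone on each of $[y_i^\star-3r_i,y_i^\star]$ and $[y_i^\star,y_i^\star+3r_i]$; consequently $\lambda-u\,(>0)$ is monotone on each half of $\overline{\mathrm{support}(\xi_i)}$ and attains its minimum over that interval either at $y_i^\star$ — where it equals $\lambda-u_i^\star=|\lambda-u_i^\star|>\delta\nu^{(j+1)/(j+1+2\gamma)}$ — or at an endpoint $y_i^\star\pm2r_i$, in which case $\lambda-u$ is decreasing from $y_i^\star$ toward that endpoint and, continuing to $y_i^\star\pm3r_i$ (where $u\le\lambda$ by continuity), condition \eqref{relation:u'_and_y-y_i_star_j} (valid on $B(y_i^\star;R_i)\supset B(y_i^\star;3r_i)$ since $r_i\le R_i/4$) gives
\[
\lambda-u(y_i^\star\pm2r_i)\ \ge\ \int_{2r_i}^{3r_i}|u'|\ \ge\ \frac{1}{C_1(u)}\int_{2r_i}^{3r_i}t^{\,j}\,dt\ =:\ c_1(u)>0 .
\]
Choosing $\nu_0(u)$ so small that $\delta\nu_0^{(j+1)/(j+1+2\gamma)}\le c_1(u)$ yields \eqref{plan:lb_far}.

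Given \eqref{plan:lb_far}, the estimate follows from the energy identity alone. Because $u-\lambda$ has a fixed sign on $\mathrm{support}(\xi_i)$,
\[
\delta\,\nu^{\frac{j+1}{j+1+2\gamma}}\|w\xi_i\|_2^2\ \le\ \int_\Torus|u-\lambda|\,\xi_i^2|w|^2\,dy\ =\ \Big|\int_\Torus(u-\lambda)\,\xi_i^2|w|^2\,dy\Big| ,
\]
and applying \eqref{Imaginary_with_f} with $g=\xi_i^2$, the product rule \eqref{product_rule} together with the $\nu$‑independent bound $\|\xi_i^2\|_{W^{4,\infty}}\le C(u)$ from \eqref{Est_xi_i} and the Sobolev embedding $H^\gamma\hookrightarrow L^\infty$ ($\gamma>1/2$), and finally \eqref{Real} (plus Young to absorb $\nu\||\pa_y|^\gamma w\|_2\|w\|_2$), I obtain $|\int(u-\lambda)\xi_i^2|w|^2|\le C(u)(\|F\|_2\|w\|_2+\nu\|w\|_2^2)$. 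Hence $\|w\xi_i\|_2^2\le C(u)\delta^{-1}\nu^{-(j+1)/(j+1+2\gamma)}(\|F\|_2\|w\|_2+\nu\|w\|_2^2)$, and a last Young's inequality on $\|F\|_2\|w\|_2$ (splitting off $\tfrac1B\|w\|_2^2$), together with $\nu^{1-\frac{j+1}{j+1+2\gamma}}=\nu^{\frac{2\gamma}{j+1+2\gamma}}\le\nu^{\frac1{j_m+1+2\gamma}}$ (valid since $2\gamma>1$ and $j\le j_m$), produces exactly \eqref{Goal_i_by_i} for $i\in\mathcal{I}_{\mathrm{far}}$, the logarithmic factors there being $\ge1$ for $\nu$ small.

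The only genuinely non‑routine point — hence the main obstacle — is the uniform bound \eqref{plan:lb_far}: a bare compactness argument gives a bound on $\min_{\mathrm{support}(\xi_i)}|u-\lambda|$ that depends on $\lambda$ and degenerates as $\lambda\to u_i^\star$. The remedy is the monotonicity of $u$ on the slightly enlarged, critical‑point‑free interval $B(y_i^\star;3r_i)$, which converts a possibly small value $|u_i^\star-\lambda|$ into either that quantity — controlled precisely because the regime $|u_i^\star-\lambda|\le\delta\nu^{(j_i+1)/(j_i+1+2\gamma)}$ has been routed to Lemma~\ref{lem:case a} — or the genuinely $\nu$‑independent increment $C_1(u)^{-1}\int_{2r_i}^{3r_i}t^{j}\,dt$.
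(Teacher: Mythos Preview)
Your proof is correct and follows the same overall route as the paper: apply \eqref{Imaginary_with_f} with $g=\xi_i^2$, control the commutator via the product rule \eqref{product_rule}, the $\nu$-independent bound \eqref{Est_xi_i} on $\xi_i$, the embedding $H^\gamma\hookrightarrow L^\infty$, and \eqref{Real}, then close with Young's inequality. The only substantive difference lies in the lower bound for $|u-\lambda|$ on $\mathrm{support}(\xi_i)$.

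The paper asserts directly that $|u(y)-\lambda|\ge 1/C(u)$ on $\mathrm{support}(\xi_i^2)$, a $\nu$-independent constant, and thereby obtains the stronger conclusion $\|w\xi_i\|_2^2\le C(B,u,\delta^{-1})\|F\|_2^2+(\tfrac1B+C(u)\nu)\|w\|_2^2$. You instead first route the overlap $\mathcal I_{\mathrm{far}}\cap\mathcal I_{\mathrm{near}}$ to Lemma~\ref{lem:case a}, and for the remaining indices prove only the $\nu$-dependent bound \eqref{plan:lb_far}. Your bound is weaker but, as you check via $\tfrac{2\gamma}{j+1+2\gamma}\ge\tfrac{1}{j_m+1+2\gamma}$, it is still enough for \eqref{Goal_i_by_i}. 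In fact your careful case analysis (minimum at $y_i^\star$ versus at an endpoint $y_i^\star\pm 2r_i$) is more robust: the paper's $\nu$-independent bound can fail when $u_i^\star$ is a \emph{local but not global} extremum and $\lambda$ sits just on the opposite side of $u_i^\star$ (e.g.\ $\lambda=u_i^\star+\epsilon$ at a local maximum). In that configuration $u-\lambda$ has fixed sign on $B(y_i^\star;3r_i)$, so $i\in\mathcal I_{\mathrm{far}}$, yet the minimum of $|u-\lambda|$ over $\mathrm{support}(\xi_i)$ is attained at $y_i^\star$ and equals $|\lambda-u_i^\star|=\epsilon$, which may be any value in $(\delta\nu^{(j+1)/(j+1+2\gamma)},c_1(u))$ --- not bounded below by a fixed $1/C(u)$. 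Your argument handles precisely this case by using $|\lambda-u_i^\star|>\delta\nu^{(j+1)/(j+1+2\gamma)}$ as the lower bound there.
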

\begin{proof} Combining the assumption $\mathrm{dist}(y_i^\star,\{z|u(z) =\lambda\})\geq  3r_i$ and the fact that the derivative $u'$ is away from zero in the region $B(y_i^\star, 3r_i)\backslash  \mathrm{support}(\xi_i^2)$, we have that $|u(y)-\lambda|\geq \frac{1}{C(u)}$ for $y\in  \mathrm{support}(\xi_i^2)=B(y_i^\star,2r_i) $. Now we apply the relation \eqref{Imaginary_with_f} with $g=\xi_i^2$, the product estimate \eqref{product_rule}, the Gagliardo-Nirenberg interpolation inequality and Young inequality to obtain that
\begin{align}
\|w  \xi_i\|_2^2&\leq C\bigg(\|F\|_2\|w \xi_i\|_2+ {\nu}\||\pa_y|^\gamma w \|_2(\||\pa_y|^\gamma w\|_2+\|w\|_2)\|\xi_i\|_\infty\\
&\quad\quad\quad\quad\quad\quad\quad+ {\nu}\||\pa_y|^\gamma w \|_2\|w \|_\infty(\||\pa_y|^\gamma(\xi_i^2)\|_2+\|\xi_i^2\|_2)\bigg)\frac{1}{ \min_{y\in  \mathrm{support}(\xi_i^2)}|u(y)-\lambda|}\\
&\leq {C(u,\delta^{-1},B)} \|F\|_2^2+\frac{1}{B}\|w \|_2^2+ {C(u, \|\xi_i\|_{H^2})} \nu(\||\pa_y|^\gamma w \|_2^2+\|w \|_2^2)\\
&\leq C(u,\delta^{-1},B) \|F\|_2^2+\left(\frac{1}{B}+C(u)\nu\right)\|w \|_2^2
\end{align} for any positive constant $B$. This implies  the estimate \eqref{Goal_i_by_i}.
\end{proof}

Finally, we estimate $\|\xi_0 w \|_2^2. $ \begin{lem} \label{lem: 0}
Assume the condition in Theorem \ref{thm:semigroup_est_PS}. Then the  following estimate holds for any positive constant $B>1$ and $\nu_0(u)>0$ small enough,
\begin{align}
\|\xi_0 w\|_2^2\leq C(B,\delta^{-1}, u)\nu^{-\frac{2}{2\gamma+1}}|\log\nu|^{ 16\gamma \al(\gamma)}\|F\|_2^2+\left(\frac{1}{B}+C(u,\delta^{-1})\nu^{\frac{1}{2\gamma+1}}\right)\|w\|_{L^2(\Torus)}^2.\label{T_0_est}
\end{align}
\end{lem}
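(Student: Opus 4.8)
The plan is to handle $||\xi_0 w||_2^2$ exactly as the $i\in\mathcal{I}_{\mathrm{interm}}$ case of Lemma \ref{lem:case b}, specialized to ``vanishing order zero'': on $\mathrm{support}(\xi_0)$ one has the uniform lower bound $|u'(y)|\geq \frac{1}{C(u)}$, which plays the role that \eqref{non-degenerate} played there with $j=0$. First I would introduce the set
\begin{align}
E_0:=\Big\{y\ \big|\ |u(y)-\lambda|\leq \tfrac{1}{9}\delta\,\nu^{\frac{1}{2\gamma+1}},\ y\in\mathrm{support}(\xi_0^2)\Big\},\qquad E_0^c=\Torus\setminus E_0,
\end{align}
and note that, since $u$ is strictly monotone on each connected component of $\mathrm{support}(\xi_0)$ with $|u'|$ bounded below there, the mean value theorem yields $\mathrm{Area}(E_0)\leq C(u)\delta\,\nu^{\frac{1}{2\gamma+1}}$ (this is \eqref{Area_E_i_cap_xi_i} with $j=0$). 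Then I would build, just as $\phi_i$ in Lemma \ref{lem:case b}, a smooth $\phi_0$ on $\Torus$ with $||\phi_0||_\infty\leq 1$, equal to $\mathrm{sign}(u-\lambda)$ on $\mathrm{support}(\xi_0^2)\cap E_0^c$, monotone across each $\mathcal{O}(\delta\nu^{\frac{1}{2\gamma+1}})$-wide transition layer straddling a point of $\{u=\lambda\}$, vanishing and affine to leading order there; its Sobolev norms follow from the same Fourier/interpolation computation as in \eqref{psi_i_bound}, \eqref{psi_i_bound_H1/2}, \eqref{H_gamma_psi}, namely $||\pa_y\phi_0||_2\leq C\delta^{-1/2}\nu^{-\frac{1}{2(2\gamma+1)}}$, $||\pa_y^2\phi_0||_2\leq C\delta^{-3/2}\nu^{-\frac{3}{2(2\gamma+1)}}$, $|| |\pa_y|^{1/2}\phi_0||_2\leq C(u,\delta^{-1})|\log\nu|$ and $|| |\pa_y|^\gamma\phi_0||_2\leq C(u,\delta^{-1})|\log\nu|^{2\al(\gamma)}\nu^{\frac{1-2\gamma}{2(2\gamma+1)}}$. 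The affine behaviour near $\{u=\lambda\}$ combined with $|u'|\gtrsim 1$ also gives the pointwise bound $\xi_0^2(y)\,|\phi_0(y)|\,|u(y)-\lambda|^{-1}\leq C\delta^{-1}\nu^{-\frac{1}{2\gamma+1}}$ on $\Torus$, the analogue of \eqref{factor_T1}.

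For the bulk part I would write $\xi_0^2|w|^2=\xi_0^2\frac{|u-\lambda|\,|\phi_0|}{|u-\lambda|\,|\phi_0|}|w|^2$, use that pointwise bound to extract a factor $C\delta^{-1}\nu^{-\frac{1}{2\gamma+1}}$, and test the resolvent equation \eqref{Resolvent} against $\overline{w}\,\phi_0\xi_0^2$ as in \eqref{Case_b_proof_1}--\eqref{T_i}. The product rule \eqref{product_rule}, together with H\"older, Young and Gagliardo--Nirenberg, disposes of the dissipative term $\nu\,\mathrm{Im}\int |\pa_y|^\gamma w\,|\pa_y|^\gamma(\phi_0\xi_0^2\overline{w})dy$; the self-similar term $||\sqrt{|u-\lambda|}\sqrt{|\phi_0|}\,\xi_0 w||_2^2$ is absorbed on the left, and \eqref{Real} converts every surviving $\nu|| |\pa_y|^\gamma w||_2^2$ (including its interpolated powers, which appear weighted by $|| |\pa_y|^\gamma\phi_0||_2+1$) into $||F||_2||w||_2$. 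This is precisely the $T_1,T_2,T_3$ computation of Lemma \ref{lem:case b}, and I expect it to output
\begin{align}
||w\xi_0||_{L^2(E_0^c)}^2\leq C(B,u,\delta^{-1})\,\nu^{-\frac{2}{2\gamma+1}}|\log\nu|^{16\gamma\al(\gamma)}||F||_2^2+\frac{1}{B}||w||_2^2,
\end{align}
the $|\log\nu|$ power and the $\nu$-exponent arising from the $||w||_\infty\,|| |\pa_y|^\gamma(\phi_0\xi_0^2)||_2$ term exactly as in the estimate of $T_6$ in Lemma \ref{lem:case a}.

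Finally I would bound the small-set contribution crudely, $||w\xi_0||_{L^2(E_0)}^2\leq \mathrm{Area}(E_0)||w||_{L^\infty(\Torus)}^2\leq C\delta\nu^{\frac{1}{2\gamma+1}}||w||_{L^\infty(\Torus)}^2$, and then apply Gagliardo--Nirenberg, Young and \eqref{Real} verbatim as in \eqref{wk_L2Ei} to get $||w\xi_0||_{L^2(E_0)}^2\leq C(B)\nu^{-\frac{2}{2\gamma+1}}||F||_2^2+(\frac{1}{B}+C(u)\nu^{\frac{1}{2\gamma+1}})||w||_2^2$; adding the two pieces and relabelling $B$ yields \eqref{T_0_est}. (If $\lambda$ is farther than $\mathcal{O}(\nu^{\frac{1}{2\gamma+1}})$ from $u(\mathrm{support}(\xi_0))$ then $E_0=\varnothing$ and only the first estimate is needed.) I do not anticipate any genuinely new obstacle: the one delicate ingredient, the $\dot H^{1/2}$ control of the sign-like cut-off $\phi_0$ responsible for the $|\log\nu|$ loss when $\gamma\in(1/2,1)$, has already been dealt with in Lemmas \ref{lem:case a}--\ref{lem:case b}, and the rest is a direct transcription of the $\mathcal{I}_{\mathrm{interm}}$ argument with the vanishing order set to zero.
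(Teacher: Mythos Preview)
Your proposal is correct and follows essentially the same route as the paper: the paper likewise introduces a sign-like transition function (denoted $\psi_0$ there) with layers of width $\mathcal{O}(\delta\nu^{\frac{1}{2\gamma+1}})$ around the finitely many points $\{y_h^\dagger\}$ where $u=\lambda$ on $\mathrm{support}(\xi_0)$, tests \eqref{Resolvent} against $\psi_0\xi_0^2\overline{w}$, and splits $\Torus$ into the small set $E_0=\{y:\exists h,\ |y-y_h^\dagger|\leq \delta\nu^{\frac{1}{2\gamma+1}}\}$ and its complement, handling the two pieces exactly as you describe. The only cosmetic difference is that the paper parameterizes $E_0$ by distance to $y_h^\dagger$ rather than by $|u-\lambda|$, which is equivalent since $|u'|\geq 1/C(u)$ on $\mathrm{support}(\xi_0)$.
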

\begin{proof}

Note that on the support of $\xi_0$, the function $u(y)$ intersects the value $\lambda$ at different points $\{y_h^\dagger\}_{h=1}^M, \quad M\leq N+1$, and near each intersection, the derivative of $u$ is away from zero, i.e., $\|u'(y_h^\dagger)\|_\infty\geq \frac{1}{C(u){}}>0$. We use the transition function trick again. To this end, we consider the following  transition function $\psi_0$:
\begin{align}
a)\quad &\psi_0 (y)=\mathrm{sign}(u(y)-\lambda),\quad |y-y_h^\dagger|\geq  \delta  \nu^{\frac{1}{2\gamma+1}};\\
\label{Regularity_of_psi_0}b)
 \quad &\|\psi_0\|_\infty\leq 1 , \quad\|\pa_y\psi_0\|_\infty\leq C(u){ \delta^{-{1}} \nu^{-\frac{1}{2\gamma+1}}},\quad\|\psi_0\|_{\dot H^1}\leq C(u){ \delta^{-1/2} \nu^{-\frac{1}{2(2\gamma+1)}}},\\
\quad &\||\pa_y|^{1/2}\psi_0\|_2\leq C(u,\delta^{-1})|\log \nu |,\quad \|\psi_0\|_{\dot H^2}\leq C(u){\delta^{-3/2} \nu^{-\frac{3}{2(2\gamma+1)}}};\\
c) \quad& \text{Near the zero points } y_h^\dagger, \quad C^{-1}|\pa_y\psi_0(y_h^\dagger)(y-y_h^\dagger)|\leq|\psi_0(y)|\leq C|\pa_y\psi_0(y_h^\dagger)(y-y_h^\dagger)|.
\end{align}
Now we focus on one intersection and comment that other intersections are similar. We test the equation \eqref{Resolvent} with $\psi_0\xi_0^2 \overline{w }$ and obtain the following relation 
\begin{align}
 \int_{\Torus}|(u-\lambda)\psi_0|\xi_0^2|w |^2dy=\int_{\Torus}(u-\lambda)\psi_0\xi_0^2| w|^2dy =\mathrm{Im}\int_\Torus F\psi_0\xi_0^2 \overline{w }dy-\nu\mathrm{Im}\int_\Torus |\pa_y|^\gamma w |\pa_y|^\gamma(\psi_0 \xi_0^2\overline{w })dy.\label{Real_part_T_0}
\end{align} 
We further decompose the domain into the following two component: \\
a) $E_0:=\{y|\,\exists h\in\{1,..., M\} \text{ such that } |y-y_h^\dagger|\leq \delta \nu^{\frac{1}{2\gamma+1}}  \}$; b) $E_0^c$. 
To estimate the solution on $E_0$, we apply the  Gagliardo-Nirenberg interpolation inequality inequality and $\gamma>1/2$ to obtain:
\begin{align}
\|\xi_0 w \|_{L^2(E_0)}^2\leq &{|E_0|} \|w \|_{L^\infty(\Torus)}^2\\
\leq &C  \nu^{\frac{1}{2\gamma+1}}\||\pa_y|^\gamma w \|_{L^2(\Torus)}^{\frac{2}{2\gamma}}\|w \|_{L^2(\Torus)}^{2\frac{2\gamma-1}{2\gamma}}+C \nu^{\frac{1}{2\gamma+1}}\|w \|_{L^2(\Torus)}^2\\
\leq &C(B) \nu^{\frac{2\gamma}{2\gamma+1}}\||\pa_y|^\gamma w \|_{L^2(\Torus)}^2+\left(\frac{1}{B}+C \nu^{\frac{1}{2\gamma+1}}\right)\|w \|_{L^2(\Torus)}^2\\
\leq&C(B)\nu^{-\frac{2}{2\gamma+1}} \|F\|_{L^2(\Torus)}^2+ \left(\frac{1}{B}+C \nu^{\frac{1}{2\gamma+1}}\right)\|w \|_{L^2(\Torus){}}^2.
\label{L2_w_F}
\end{align}
Now we estimate the $E_0^c$ component of the solution. To this end, we collect the relation \eqref{Real_part_T_0}, the fact that $|u'|(y)$ is bounded below on the set $E_0$, and the fact that $|y-y_h^\dagger|\geq \delta \nu^{\frac{1}{2\gamma+1}}$ on $E_0^c$. Then we apply the product estimate \eqref{product_rule}, the Gagliardo-Nirenberg interpolation inequality to estimate the $E_0^c$ part as follows,
\begin{align}
\|\xi_0 w \|
&_{L^2(E_0^c)}^2\leq C(u)\delta^{-1}\left(\min_{y\in E_0}|u'(y)|\right)^{-1}\nu^{-\frac{1}{2\gamma+1}} \int_{E_0^c}|u-\lambda \|\psi_0\|\xi_0|^2|w |^2dy\\
\leq &C(u)\delta^{-1}\left(\min_{y\in E_0}|u'(y)|\right)^{-1}\nu^{-\frac{1}{2\gamma+1}} \int_{\Torus }(u-\lambda )\psi_0\xi_0^2|w |^2dy\\
\leq&C(\delta^{-1},u,\|\xi_0\|_{H^2 })\nu^{-\frac{1}{2\gamma+1}}\bigg( \norm{\frac{F\sqrt{|\psi_0|}\xi_0}{\sqrt{|u-\lambda|}}}_{L^2}^2+ {\nu} \||\pa_y|^\gamma w \|_{L^2 }(\||\pa_y|^\gamma w\|_{L^2 }+\|w\|_{L^2})\|\psi_0\|_{L^\infty}\\
&\quad\quad \quad+ {\nu} \left(\||\pa_y|^\gamma w \|_2^{\frac{2\gamma+1}{2\gamma}}\|w \|_2^{\frac{2\gamma-1}{2\gamma}}+\||\pa_y|^\gamma w \|_2\|w \|_2\right)(\|\psi_0\|
_{\dot H^\gamma}+1)\bigg)\\
=:&\sum_{i=1}^3 T_{ i}.&\label{T_0i}
\end{align}
We estimate each term in the expression \eqref{T_0i}. Before estimating the $T_{1}$ term, we apply property c) of the function $\psi_0$, the fact that the derivative of $u$ does not vanish in the set $E_0\cap \mathrm{support}(\xi_0)$ to derive the following relation
\begin{align}
\xi_0^2(y) \frac{|\psi_0(y)|}{{|u(y)-\lambda|}} \leq & \xi_0^2(y)\mathbf{1}_{E_0}(y) \frac{\|\pa_y\psi_0\|_\infty |y-y_h^\dagger|}{{|y-y_h^\dagger|}\min_{z\in E_0}|u'(z)|} +\xi_0^2(y)\mathbf{1}_{ E_0^c}(y)\frac{1}{\min_{z\in E_0}|u'(z)|}\delta^{-1}\nu^{-\frac{1}{2\gamma+1}}\\
\leq& C(u)\delta^{-1}\nu^{-\frac{1}{2\gamma+1}},\quad \forall y\in \Torus.
\end{align}
The estimate yields  that 
\begin{align}\label{Case_0_T_1}
T_{ 1}\leq C(\delta^{-1},u)\nu^{-\frac{2}{2\gamma+1}} \|F\|_2^2.
\end{align}
Now we estimate $T_2$ in the decomposition using H\"older inequality, Young's inequality and the estimate \eqref{Real}, as follows
\begin{align}
T_{ 2}
\leq &C(B,\delta^{-1},u)\nu^{-\frac{2}{2\gamma+1}} \|F\|_2^2+\frac{1}{B}\|w \|_2^2.\label{Case_0_T_2}
\end{align} 
\ifx Now we estimate $T_3$ in \eqref{T_0i} using H\"older inequality, Young's inequality, the fact that $\gamma\geq0$, and the estimate \eqref{Real} as follows
\begin{align}
T_{3}\leq &C(\delta^{-1},u)\nu^{-\frac{1}{2\gamma+1}+1} \||\pa_y|^\gamma w \|_2\|w \|_2\|\psi_0\|_\infty\\
\leq&C(B,\delta^{-1},u)\nu^{-\frac{2}{2\gamma+1}+1} \|F\|_2\|w \|_2+\frac{1}{B} \|w \|_2^2\\
\leq & C(B,\delta^{-1},u)\nu^{-\frac{4}{2\gamma+1}+2} \|F\|_2^2+\frac{1}{B}\|w \|_2^2\\
\leq& C(B,\delta^{-1},u)\nu^{-\frac{2}{2\gamma+1}} \|F\|_2^2+\frac{1}{B}\|w \|_2^2,
\end{align}
which is consistent with \eqref{T_0_est}. 
\fi 
Finally, we estimate $T_{3}$ in \eqref{T_0i}. 
We estimate the $H^\gamma$-norm of $\psi_0$. By the Gagliardo-Nirenberg interpolation inequality and the regularity of $\psi_0$ \eqref{Regularity_of_psi_0}, we have the following estimate 
\begin{align}
\|\psi_0\|_{\dot H^{\gamma}}\leq&C(\delta^{-1})|\log \nu|^{2\al(\gamma)}\nu^{-\frac{2\gamma-1}{2(2\gamma+1)}},\quad\gamma\in(1/2,2].
\end{align} 
Combining these two estimate together with H\"older inequality, Young's inequality and the estimate \eqref{Real}, we obtain that for $\gamma\in(1/2,2]$, 
\begin{align}
T_{ 3}\leq &C(B,\delta^{-1},u)|\log \nu|^{\frac{8\gamma\al(\gamma)}{2\gamma+1}}\nu^{\frac{2\gamma}{2\gamma+1}}\||\pa_y|^\gamma w\|_2^2+C(B,\delta^{-1},u)|\log \nu|^{4\al(\gamma)}\nu\||\pa_y|^\gamma w\|_2^2+\frac{1}{B}\|w\|_2^2\\
\leq&C(B,\delta^{-1},u)\nu^{-\frac{2}{2\gamma+1}}|\log \nu|^{{16\gamma\al(\gamma)}} \|F\|_2^2+\frac{1}{B}\|w \|^2_2.\myb{(Check?)}\label{Case_0_T_3}
\end{align}
Combining the estimates \eqref{T_0i}, \eqref{Case_0_T_1}, \eqref{Case_0_T_2}, \eqref{Case_0_T_3} above and \eqref{L2_w_F} yields the estimate \eqref{T_0_est}.  
\ifx\begin{align} 
\|\xi_0w\|_2^2\leq C(B,\delta^{-1},u)\nu^{-\frac{2}{2\gamma+1}} |\log \nu|^{16\gamma\al(\gamma)}\|F\|_2^2+\left(\frac{1}{B}+C\nu^{\frac{1}{2\gamma+1}}\right) \|w \|_2^2.\label{T_0}
\end{align}\fi 
\end{proof}
\begin{proof}[Proof of Proposition \ref{pro:resolvent_estimate}]  
We combine Lemma \ref{lem:case a}, Lemma \ref{lem:case b}, Lemma \ref{lem:case c}, and Lemma \ref{lem: 0}, to obtain that if $\min_{y\in\Torus}|\lambda- u(y)|\leq \delta(u)\nu^{\frac{j_m +1}{j_m +1+2\gamma}}$ for $\delta(u)$ chosen in \eqref{Choice_of_delta},  the following estimate holds
\begin{align}
\|w \|_2^2=\sum_{i=0}^N\|w \xi_i\|_2^2\leq &
C(B,\delta^{-1},u,N,\{j_i\}_{i=1}^N)\nu^{-2\frac{j_m +1}{j_m +1+2\gamma}} |\log(\nu )|^{16\gamma\al(\gamma)}\|F\|_2^2\\
&+CN\left(\frac{1}{B}+C(u,\delta^{-1},\{j_i\}_{i=1}^N)\nu^{\frac{1}{j_m +1+2\gamma}}|\log\nu|^{4\al(\gamma)}\right)\|w  \|_2^2.
\end{align} 
If we choose $B^{-1}$, and then $\nu_0 $ small enough, then for $0<\nu\leq\nu_0$, the following estimate holds
\begin{align}\label{Choice_of_B_delta_nu_0}
\|w \|_2^2\leq & C(u)\nu^{-2\frac{j_m +1}{j_m +1+2\gamma}} |\log(\nu )|^{16\gamma \al(\gamma)}\|F\|_2^2.
\end{align}
Combining the $L^2$-estimate with the relation \eqref{Real}
 yields the conclusion \eqref{Goal}.  Recalling Lemma \ref{lem:away_from_range}, this completes the proof of Proposition \ref{pro:resolvent_estimate}.  
\end{proof}

\ifx 
\appendix \myb{I think it is possible to use the profile to prove $L^\infty$-ED without loss in $log\nu$? But we might have the following $\|\wh\eta_k(t)\|_{L^\infty}\leq C(\nu^{-1}) e^{-\delta_{ED}d(\nu)t}$.} For $0<\gamma\leq 1/2$, we might need the $|\pa_x|^{2\gamma$ and we might need the comparison principle to guarantee that the $L^\infty$-norms are bounded. 
\section{Enhanced dissipation of the $L^\infty$-norm}
Now, we prove the following lemma which provides $L^\infty$-estimates for the passive scalar solutions. \textcolor{red}{The proof requires the maximum principle.}
\begin{lem}[$L^\infty$-decay of the passive scalar solution]\label{Lem:Linfty}
Consider solutions $n$ to the passive scalar equation \eqref{EQ:Fractional_PS}. Assume conditions in Theorem \ref{thm:semigroup_est_PS_full} and assume $\gamma\in(1/2,1]$. Then if $\nu$ is small enough, there exist constants $c\in(0,1),\,C$ such that the following estimate holds
 \begin{align}
\| n(t)\|_\infty\leq C \|n_{0} \|_\infty e^{-c\delta d(\nu)|\log\nu|^{-\al-1}t},\quad \forall t\in[0,\infty).
\end{align}
\end{lem}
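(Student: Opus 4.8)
The plan is to combine the $L^2$ enhanced dissipation of Theorem~\ref{thm:semigroup_est_PS_full} with two elementary facts available because $\gamma\le1$: an $L^\infty$ contraction from a maximum principle, valid for all $t$, and a quantitative parabolic smoothing estimate $L^2\to L^\infty$ over a unit time window, which will cost a fixed negative power of $\nu$. For the contraction: for $\gamma\in(1/2,1]$ both $(-\de_x)^\gamma$ and $(-\de_y)^\gamma$ generate sub-Markovian semigroups on $\Torus^2$, so testing \eqref{EQ:Fractional_PS} against $|n|^{p-2}n$, using the Stroock--Varopoulos inequality to give the dissipative term the correct sign, and using $\int u(y)\pa_x n\,|n|^{p-2}n=\frac1p\int u(y)\pa_x|n|^p=0$ (here $\pa_x u\equiv0$ is \emph{essential}) yields $\frac{d}{dt}\|n(t)\|_{L^p}\le0$ for every $p\in[1,\infty]$. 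In particular $\|n(t)\|_\infty\le\|n_0\|_\infty$, and since $\Torus^2$ is bounded, $\|n(t)\|_2\le C\|n_0\|_\infty$.

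For the smoothing estimate I would show that for some $s\in(1,2)$, some $M=M(\gamma)>0$, and $C=C(u)$ one has $\|n(1)\|_{\dot H^s(\Torus^2)}\le C\nu^{-M}\|n_0\|_2$. This is an $\dot H^s$ energy estimate: differentiating \eqref{EQ:Fractional_PS}, the transport term produces $\langle(-\de)^{s/2}(u\pa_x n),(-\de)^{s/2}n\rangle$, whose principal part has zero real part because $\pa_x u\equiv0$ (so $x$-derivatives cost nothing) and whose commutator remainder is bounded by $C(\|u\|_{C^{s+1}})\|n\|_{\dot H^s}^2$ via a Kato--Ponce estimate (the zero-$x$-average constraint \eqref{zero_average} is used to absorb the extra $x$-derivative into $\dot H^s$); the dissipation gives $-c\nu\|n\|_{\dot H^{s+\gamma}}^2$ since $|k|^{2\gamma}+|\ell|^{2\gamma}\ge c(k^2+\ell^2)^\gamma$ for $\gamma\le1$. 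Interpolating $\|n\|_{\dot H^s}\le\|n\|_{\dot H^{s+\gamma}}^{s/(s+\gamma)}\|n\|_{2}^{\gamma/(s+\gamma)}$, applying Young's inequality, using $\|n(t)\|_2\le\|n_0\|_2$ and the elementary inequality $\|n\|_{\dot H^{s+\gamma}}\ge\|n\|_{\dot H^s}$ (again from \eqref{zero_average}), together with a time weight to start from merely $L^2$ data, gives the bound. Sobolev embedding $H^s(\Torus^2)\hookrightarrow L^\infty$ for $s>1$ and Step~1 then give $\|n(1)\|_\infty\le C(u)\nu^{-M}\|n_0\|_\infty$; by autonomy of the equation, $\|n(t)\|_\infty\le C(u)\nu^{-M}\|n(t-1)\|_2$ for all $t\ge1$.

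Now I combine. Plugging $\|n(t-1)\|_2\le C\|n_0\|_2 e^{-r(t-1)}$ from \eqref{full_ED}, with $r:=\delta_{ED}\nu^{(j_m+1)/(j_m+1+2\gamma)}|\log\nu|^{-\beta(\gamma)}$ (the $L^2$ rate, in the notation where $d(\nu)$ abbreviates $\nu^{(j_m+1)/(j_m+1+2\gamma)}$; the factor $e^{-\nu t}$ is discarded), into the smoothing bound and using $\|n_0\|_2\le C\|n_0\|_\infty$ gives $\|n(t)\|_\infty\le A\,\|n_0\|_\infty e^{-rt}$ for all $t\ge1$, with $A:=C(u)\nu^{-M}=C(u)e^{M|\log\nu|}$. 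The last task is to absorb $A$ into a marginally slower exponential: put $\rho:=r/(2M|\log\nu|)$, which has the form $c\,\delta_{ED}\,\nu^{(j_m+1)/(j_m+1+2\gamma)}|\log\nu|^{-(\beta(\gamma)+1)}$, and split at $T_\star:=2M|\log\nu|/r$ (which exceeds $1$ for $\nu$ small). For $t\ge T_\star$ one checks directly that $Ae^{-rt}\le C(u)e^{-\rho t}$; for $0\le t\le T_\star$ the maximum principle gives $\|n(t)\|_\infty\le\|n_0\|_\infty\le e^{\rho T_\star}e^{-\rho t}\|n_0\|_\infty$ with $\rho T_\star=1$. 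Hence $\|n(t)\|_\infty\le C\|n_0\|_\infty e^{-\rho t}$ for all $t\ge0$ with $C$ independent of $\nu$, which is the asserted estimate with $d(\nu)=\nu^{(j_m+1)/(j_m+1+2\gamma)}$ and a log-loss exponent one larger than in the $L^2$ rate.

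The main obstacle is the smoothing step: one must track the constant to pin down an \emph{explicit} power $\nu^{-M}$ in $\|n(1)\|_\infty\le C(u)\nu^{-M}\|n_0\|_2$, which hinges on $\pa_x u\equiv0$ to prevent the transport term from costing a derivative in the energy estimate and on $\gamma>1/2$ through the applicability of Theorem~\ref{thm:semigroup_est_PS_full}; the bound $\gamma\le1$ is needed for the maximum principle, since the fractional Laplacian is a Markov generator only up to order $2$. The remaining bookkeeping in the combine step, converting the $\nu^{-M}$ prefactor into one extra power of $|\log\nu|^{-1}$ in the rate while keeping the constant uniform, is routine.
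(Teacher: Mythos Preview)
Your overall strategy is correct and matches the paper's: combine the $L^2$ enhanced dissipation with a quantitative $L^2\to L^\infty$ smoothing bound costing a negative power of $\nu$, then trade that prefactor for one extra factor of $|\log\nu|^{-1}$ in the rate. The difference lies in how the smoothing is obtained.

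The paper proves the sharp bound $\|n(t)\|_\infty\le C(\nu t)^{-1/(2\gamma)}\|n(0)\|_2$ by a Nash--duality argument: first the Nash inequality $\||\nabla|^\gamma n\|_2^2\ge c\|n\|_2^{2+2\gamma}/\|n\|_1^{2\gamma}$ together with $L^1$ non-expansion (from splitting $n_0$ into its positive and negative parts and invoking the maximum principle, which is where $\gamma\le 1$ enters) gives $\|n(t)\|_2\le C(\nu t)^{-1/(2\gamma)}\|n_0\|_1$; then pairing with the solution $\varphi^t$ of the adjoint equation $\pa_s\varphi=-\nu(-\de_x)^\gamma\varphi-\nu(-\de_y)^\gamma\varphi+u(y)\pa_x\varphi$ upgrades $L^1\to L^2$ to $L^2\to L^\infty$. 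Your route---a direct $\dot H^s$ energy estimate plus Sobolev embedding---also works and yields an explicit $M$, the key point being that the commutator $[|\nabla|^s,u]\pa_x n$ costs no derivative because the symbol of $|\nabla|^{s-1}\pa_x$ is dominated by $|\xi|^s$; this is exactly your remark about absorbing the extra $x$-derivative. The Nash--duality route is shorter and gives the sharp smoothing exponent $1/(2\gamma)$; your energy route avoids duality and would extend to settings where the adjoint problem is less accessible, at the price of a time-weighted bootstrap to start from merely $L^2$ data.

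For the combining step, the paper iterates: on each block of length $T\sim c^{-1}\delta^{-1}d(\nu)^{-1}|\log\nu|^{\alpha+1}$ it applies $L^2$ decay on the first half and $L^2\to L^\infty$ smoothing on the second, choosing $c$ so the product contracts the $L^\infty$ norm by a fixed factor. Your direct absorption---splitting at $T_\star=2M|\log\nu|/r$ and using the maximum principle for $t\le T_\star$---is an equivalent repackaging.
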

\begin{proof}
We derive an  $L^2$-$L^\infty$-estimate of the passive scalar semigroup $S_{s;s+t}$. Consider the time interval $[s, s+c^{-1}\delta^{-1}d(\nu)^{-1}|\log \nu|^{\al+1}]$, where $c\in(0,1)$ denotes a universal constant. First we  prove the following estimate for passive scalar equation 
\begin{align}\label{Nash}
\| S_{s;s+t}n(s)\|_\infty\leq \frac{C }{(\nu t)^{\frac{1}{2\gamma}}}\|n(s)\|_2.
\end{align} 
The proof of this estimate \eqref{Nash} is a combination of Nash inequality and a duality argument. We follow the  the proof of equation (4.4) in \cite{IyerXuZlatos}, \cite{CKRZ08}, Lemma 5.4 in \cite{Zlatos2010}.   
Without loss of generality, we assume that $s=0$. 
To prove \eqref{Nash}, we estimate the time evolution of the $L^2$-norm with Nash inequality as follows:
\begin{align}
\frac{d}{dt}\frac{1}{2}\|n\|_2^2\leq& -\nu \||\na|^\gamma n\|_2^2
\leq-\frac{\nu\|n\|_2^{2\gamma+2}}{C_N\|n\|_1^{2\gamma}}\leq-\frac{\nu\|n\|_2^{2\gamma+2}}{C_N\|n_{0}\|_1^{2\gamma}}.
\end{align}
Here the $L^1$-norm of $n$ is non-expansive because we can consider the solutions to \eqref{EQ:Fractional_PS} subject to the positive and negative part of the initial data, i.e., $\max\{\pm n_0,0\}.$ \textcolor{red}{Since both of them are positive and have decaying $L^1$-norms and $n$ is the sum of these two solutions, we have that the $L^1$-norm of $n(t)$ is less than the $L^1$-norm $\| n_0\|_1$.} Next we directly solve the ordinary differential inequality subject to infinity initial data and obtain that there exists a universal constant $C$ such that the following estimate holds
\begin{align}\|n(t)\|_2\leq\frac{C}{\gamma^{\frac{1}{2\gamma}}(\nu t)^{\frac{1}{2\gamma}}}\|n_0\|_1.\label{L1toL2est}
\end{align}
Next we consider the dual solution of the equation \eqref{EQ:Fractional_PS}
\begin{align}\label{Dual_equation}
\pa_s \varphi^t=-\nu(-\de_x)^{\gamma} \varphi^t-\nu(-\de_y)^{\gamma}\varphi^t + u( y ) \pa_x \varphi^t,\quad \varphi^t(0,x,y)=\varphi_0(x,y),\quad \int_{\Torus}\varphi_0(x,y)dV=0.
\end{align}
Next we prove the following  dual  relation
\begin{align}
\int_{\Torus^2}n(t,x,y)\varphi_0(x,y)dV=\int_{\Torus^2}n_0(x,y)\varphi^t(t,x,y)dV.\label{dual}
\end{align}To this end , we consider the pairing
\begin{align}
\int_{\Torus^2}n(t-s,x,y)\varphi^t(s,x,y)dV,\quad \forall s\in[0,t].
\end{align}  
Direct $s$-derivative yields that
\begin{align}
&\frac{d}{ds}\int_{\Torus^2}n(t-s,x,y)\varphi^t(s,x,y)dV\\
&=\int_{\Torus^2} (\nu(-\de_x)^\gamma +\nu(-\de_y)^{\gamma}+u(y ) \pa_x )n(t-s,x,y)\varphi^t(s,x,y)dV\\
&\quad+\int_{\Torus^2}n(t-s,x,y)(-\nu(-\de_x)^{\gamma}-\nu(-\de_y)^{\gamma}+u( y )  \pa_x )\varphi^t(s,x,y)dV=0.& 
\end{align}
Hence the pairing is a constant. Now by taking $s=0$ and $s=t$, we obtain the relation \eqref{dual}. By the same argument as the one yielding \eqref{L1toL2est}, we have the following estimate for the $\varphi^t(s)$
\begin{align}
\|\varphi^t(t)\|_2\leq  \frac{C}{(\nu t)^{\frac{1}{2\gamma}}}\|\varphi_0\|_1.
\end{align}
Now we can use the  relation \eqref{dual} to estimate the $L^\infty$-norm of $n(t)$
\begin{align}
\bigg|\int n(t,x,y)\varphi_0(x,y) dxdy \bigg|=&\bigg|\int n_{0}(x,y)\varphi^t(t,x,y)dxdy\bigg|\leq \|n_{0}\|_{L^2(\Torus^2)}\|\varphi^t( t,\cdot)\|_{L^2(\Torus^2)}\\
\leq&\frac{C}{(\nu t)^{\frac{1}{2\gamma}}}\|\varphi_0\|_{L^1(\Torus^2)}\|n_{0}\|_{L^2(\Torus^2)}.
\end{align}
By the dual characterization of $L^\infty$-norm, we have that
\begin{align}
\|n(t)\|_{L^\infty(\Torus^2)}\leq\frac{C}{(\nu t)^{\frac{1}{2\gamma}}}\|n_{0}\|_{L^2(\Torus^2)}.
\end{align} 

Now we decompose the interval $[s, s+c^{-1}\delta^{-1}d(\nu)^{-1}|\log \nu|^{\al+1}]$ into two equal-length sub-intervals and  apply the following estimate:
\begin{align}
\|S&_{s,s+c^{-1}\delta^{-1}d(\nu)^{-1}|\log \nu|^{\al+1}}n(s)\|_\infty\\
=&\|S_{s+\frac{1}{2}c^{-1}\delta ^{-1}d(\nu)^{-1}|\log \nu|^{\al+1},s+c^{-1}\delta ^{-1}d(\nu)^{-1}|\log \nu|^{\al+1}}S_{s,s+\frac{1}{2}c^{-1}\delta ^{-1}d(\nu)^{-1}|\log \nu|^{\al+1}}n(s)\|_\infty\\
\leq &\frac{C }{(\nu \frac{1}{2}c^{-1}\delta ^{-1}d(\nu)^{-1}|\log \nu|^{\al+1})^{\frac{1}{2\gamma}}}\|S_{s,s+\frac{1}{2}c^{-1}\delta ^{-1}d(\nu)^{-1}|\log \nu|^{\al+1}}n(s)\|_2\\
\leq&\frac{C}{(c^{-1}\delta ^{-1}\nu d(\nu)^{-1}|\log\nu|^{\al+1})^{\frac{1}{2\gamma}}}\|n(s)\|_2 {e^{-\frac{1}{2}\delta  d(\nu)|\log \nu|^{-\al}c^{-1}\delta ^{-1}d(\nu)^{-1}|\log \nu|^{\al+1}}}\\
\leq &\frac{1}{64}\|n(s)\|_2{\leq \frac{1}{32}\pi \|n(s)\|_\infty}.\label{L2_Linfty_semigroup}
\end{align}
In the last line, we choose $c$ and then $\nu$ small enough compared to universal constants so that the coefficient is small. We further note that the $L^\infty$-norm of $n$ is dissipative along the dynamics. To conclude, we iterate the argument on consecutive intervals to derive the estimate. 
\end{proof}
\fi
\bibliographystyle{abbrv}
\bibliography{nonlocal_eqns,JacobBib,SimingBib}
\end{document}